\newcommand{\kt}{$\Bbbk$\nobreakdash-\hspace{0pt}}
\newcommand{\Ob}{\mathrm{Ob}}
\newcommand{\Hn}{H_\param^n}
\newcommand{\co}{\colon}
\newcommand{\sss}{\mathcal{S}}
\newcommand{\up}{\mathcal{U}}
\newcommand{\low}{\mathcal{L}}
\newcommand{\I}{\mathcal{I}}
\newcommand{\un}{\mathbb{1}}
\newcommand{\param}{\omega}
\newcommand{\End}{\mathrm{End}}
\newcommand{\Hom}{\mathrm{Hom}}
\newcommand{\tr}{\mathrm{tr}}
\newcommand{\id}{\mathrm{id}}
\newcommand{\Z}{\mathbb{Z}}
\newcommand{\N}{\mathbb{N}}
\newcommand{\R}{\mathbb{R}}
\newcommand{\T}{\mathbb{T}}
\newcommand{\kk}{\Bbbk}
\newcommand{\ns}{\theta}
\newcommand{\mti}{\,\mbox{-}\,}
\newcommand{\inv}{\tau}
\newcommand{\reve}{\mathrm{rev}}
\newcommand{\opp}{\mathrm{op}}
\newcommand{\mand}{\quad \text{and} \quad}
\newcommand{\lev}{\mathrm{ev}}
\newcommand{\lcoev}{\mathrm{coev}}
\newcommand{\rev}{\widetilde{\mathrm{ev}}}
\newcommand{\rcoev}{\widetilde{\mathrm{coev}}}
\newcommand{\labeli}{\renewcommand{\labelenumi}{{\rm (\roman{enumi})}}}
\newcommand{\labela}{\renewcommand{\labelenumi}{{\rm (\alph{enumi})}}}
\newcommand{\rsdraw}[3]{\raisebox{-#1\height}{\scalebox{#2}{\includegraphics{#3.eps}}}}
\newcommand{\norm}[1]{\left\langle#1\right\rangle}
\newtheorem{theorem}{Theorem}
\newtheorem{lemma}{Lemma}
\newtheorem{claim}{Claim}
\title{Generalized Kuperberg invariants of 3-manifolds}
\author{Rinat Kashaev}
\address{Section de math\'ematiques, Universit\'e de Gen\`eve,
2-4 rue du Li\`evre, 1211 Gen\`eve 4, Suisse\\}
\email{rinat.kashaev@unige.ch}
\author{Alexis Virelizier}
\address{D\'epartement de math\'ematiques, Universit\'e de Lille,
Cit\'e Scientifique, 59655 Villeneuve d'Ascq, France\\}
\email{alexis.virelizier@univ-lille.fr}
\date{\today}
\thanks{Supported in part by Swiss National Science Foundation.}
\begin{document}
\begin{abstract}
In the 90s, based on presentations of 3-manifolds by Heegaard diagrams, Kuperberg associated a scalar invariant of 3-manifolds  to each  finite dimensional involutory Hopf algebra over a field.  We generalize this construction to the case of involutory Hopf algebras in arbitrary symmetric monoidal categories  admitting certain pairs of morphisms called \emph{good pairs}.  We construct examples of such good pairs for involutory Hopf algebras whose distinguished grouplike elements are central. The generalized construction is illustrated by an example of an involutory super Hopf algebra.
\end{abstract}
\maketitle
\section{Introduction}
In the 90s, G.~Kuperberg in \cite{Ku1} defined a scalar invariant of closed oriented  3-manifolds by using arbitrary finite-dimensional involutory Hopf algebra over a field. The combinatorics of Kuperberg's construction comes from the presentation of 3-manifolds by Heegaard diagrams and the invariant is calculated by using the tensor products of the structural morphisms of the Hopf algebra contracted with appropriate tensor products of (two-sided) integrals and cointegrals of the Hopf algebra.

In this paper, we generalize Kuperberg's construction to the context of involutory Hopf algebras in arbitrary symmetric monoidal categories where the tensor products of structural morphisms are contracted with morphisms which, in general, are different from integrals and cointegrals (at least when the latter are not two-sided).  More precisely, we define the notion of a \emph{good pair} of an involutory Hopf algebra in a symmetric monoidal category and prove that such a pair gives rise to a topological invariant of closed oriented 3-manifolds (see Theorem~\ref{main-thm}). Also, we give a way to derive examples of good pairs from integrals and cointegrals  when the distinguished grouplike elements are central  (see Theorem~\ref{thm-gp}). In the particular case where the integrals and cointegrals are two-sided (i.e., the distinguished grouplike elements and the distinguished algebra morphisms are trivial), the associated good pair is given by the integral and cointegral themselves, reproducing thereby Kuperberg's original construction.

The paper is organized as follows. In Section~\ref{sec1}, we describe the notation and conventions used and specify our setup which  is given by a symmetric monoidal category, together with Hopf  algebras  and invertible objects in it. In Section~\ref{sec2}, based on Kuperberg's work \cite{Ku1}, first we recall the combinatorics of Heegaard diagrams associated to 3-manifolds and then explain how, by using the structural maps of an involutory  Hopf  algebra  in a symmetric monoidal category, one associates a tensor to each  (ordered, oriented, and based)  Heegaard diagram. In Section~\ref{sec3}, we introduce the notion of a good pair and prove
that such a pair gives rise to a 3-manifold invariant (Theorem~\ref{main-thm}). Also, we illustrate the general construction with an example of an involutory super Hopf algebra (i.e., an involutory Hopf algebra in the symmetric monoidal category of super vector spaces). In Section~\ref{sec4}, we describe  how one  can construct good pairs starting from integrals and cointegrals  (Theorem~\ref{thm-gp}).

\section{Algebraic preliminaries}\label{sec1}

In this section, $\sss$ is a symmetric monoidal category, with monoidal product $\otimes$, unit object $\un$, and symmetry
$\tau=\{\tau_{X,Y} \co X \otimes Y \to Y \otimes X\}_{X,Y \in \Ob(\sss)}$.
Recall that $\End_\sss(\un)$ is a commutative monoid (with unit $1=\id_\un$) which acts on each Hom set by the monoidal product. Since $\sss$ is symmetric, this action satisfies that
$$
\alpha (f \otimes g)=(\alpha f) \otimes g=f \otimes (\alpha g)
$$
for all morphisms $f,g$ in $\sss$ and all $\alpha \in \End_\sss(\un)$.
The elements of $\End_\sss(\un)$ are called \emph{scalars}.  For objects $X,Y$ of $\sss$, the notation $X \simeq Y$ means that $X$ and $Y$ are isomorphic, i.e., that  there is an invertible morphism $X\to Y$.

\subsection{Conventions}\label{sect-conventions}
We  will suppress in our formulas  the associativity  and unitality  constraints   of the monoidal category $\sss$.   This does not lead  to   ambiguity because by   Mac Lane's   coherence theorem (see \cite{ML}),   all legitimate ways of inserting these constraints   give the same results.
For  any objects $X_1,...,X_n$ with $n\geq 2$, we set
$$
X_1 \otimes X_2 \otimes \cdots \otimes X_n=(... ((X_1\otimes X_2) \otimes X_3) \otimes
\cdots \otimes X_{n-1}) \otimes X_n
$$
and similarly for morphisms.

\subsection{Graphical conventions}

Morphisms in $\sss$ may be represented by diagrams to be read from bottom to top.  We discuss here the basics of this Penrose graphical calculus. The identity $\id_X$ of an  object $X$ of $\sss$, a morphism $f\co X \to Y$ in~$\sss$, and the composition of two morphisms $f\co X \to Y$ and $g\co Y \to Z$ may be graphically represented as follows:
$$
\psfrag{X}[Bl][Bl]{\scalebox{.8}{$X$}} \psfrag{Y}[Bc][Bc]{\scalebox{.8}{$Y$}} \psfrag{h}[Bc][Bc]{\scalebox{.9}{$f$}} \psfrag{g}[Bc][Bc]{\scalebox{.9}{$g$}}
\psfrag{Z}[Bc][Bc]{\scalebox{.8}{$Z$}} \id_X=\, \rsdraw{.45}{.9}{identitymorph-bt}\,,\quad f=\, \rsdraw{.45}{.9}{morphism-bt}\;\,,\quad \text{and} \quad gf=\, \rsdraw{.45}{.9}{morphismcompo-bt}\,\; .
$$
The monoidal product of two morphisms $f\co X \to Y$
and $g \co U \to V$ in~$\sss$ is represented by juxtaposition:
$$
\psfrag{X}[Bl][Bl]{\scalebox{.8}{$X$}}
\psfrag{h}[Bc][Bc]{\scalebox{.9}{$f$}}
\psfrag{Y}[Bl][Bl]{\scalebox{.8}{$Y$}}
f\otimes g= \,\rsdraw{.45}{.9}{morphism-bt}\,
\psfrag{X}[Bl][Bl]{\scalebox{.8}{$U$}}
\psfrag{g}[Bc][Bc]{\scalebox{.9}{$g$}}
\psfrag{Y}[Bl][Bl]{\scalebox{.8}{$V$}}
\rsdraw{.45}{.9}{morphism3-bt}\,\;   .
$$
We can also use boxes with several strands attached to their horizontal sides. For example, a morphism $f \co X \otimes Y \to A \otimes B \otimes C$ with  $X,Y,A,B,C\in \Ob (\sss)$   may be represented in various ways,   such   as
$$
\psfrag{X}[Bl][Bl]{\scalebox{.8}{$X$}}
\psfrag{h}[Bc][Bc]{\scalebox{.9}{$f$}}
\psfrag{Y}[Bl][Bl]{\scalebox{.8}{$Y$}}
\psfrag{A}[Bl][Bl]{\scalebox{.8}{$A$}}
\psfrag{B}[Bl][Bl]{\scalebox{.8}{$B$}}
\psfrag{C}[Bl][Bl]{\scalebox{.8}{$C$}}
\rsdraw{.45}{.9}{ex-penro1}
\qquad \text{or} \qquad
\psfrag{X}[Bl][Bl]{\scalebox{.8}{$X \otimes Y$}}
\psfrag{h}[Bc][Bc]{\scalebox{.9}{$f$}}
\psfrag{A}[Bl][Bl]{\scalebox{.8}{$A$}}
\psfrag{B}[Bl][Bl]{\scalebox{.8}{$B \otimes C$}}
\rsdraw{.45}{.9}{ex-penro2c}
\qquad \text{or} \qquad
\psfrag{X}[Bl][Bl]{\scalebox{.8}{$X$}}
\psfrag{h}[Bc][Bc]{\scalebox{.9}{$f$}}
\psfrag{Y}[Bl][Bl]{\scalebox{.8}{$Y$}}
\psfrag{A}[Br][Br]{\scalebox{.8}{$A \otimes B$}}
\psfrag{B}[Bl][Bl]{\scalebox{.8}{$C$}}
\rsdraw{.45}{.9}{ex-penro2b} \,\;.
$$
In accordance with   conventions of  Section~\ref{sect-conventions}, we  ignore here the associativity   constraint   between the  objects $A \otimes B \otimes C =(A \otimes B)\otimes C$ and $A \otimes (B\otimes C)$.
A~box whose lower/upper side has no  attached strands represents a morphism with source/target $\un$. For example,
morphisms $\alpha\co \un \to \un$,   $\beta\co \un \to X$,   $\gamma\co X \to \un$  with $X\in \Ob (\sss)$ may be represented by the diagrams
$$
\psfrag{h}[Bc][Bc]{\scalebox{.9}{$\alpha$}}
\rsdraw{.45}{.9}{ex-penro5}\;, \qquad
\psfrag{X}[Bl][Bl]{\scalebox{.8}{$X$}}
\psfrag{h}[Bc][Bc]{\scalebox{.9}{$\beta$}}
\psfrag{Y}[Bl][Bl]{\scalebox{.8}{$Y$}}
\rsdraw{.45}{.9}{ex-penro3b}\;, \qquad
\psfrag{X}[Bl][Bl]{\scalebox{.8}{$X$}}
\psfrag{h}[Bc][Bc]{\scalebox{.9}{$\gamma$}}
\psfrag{Y}[Bl][Bl]{\scalebox{.8}{$Y$}}
\rsdraw{.45}{.9}{ex-penro4b}\;.
$$

Every diagram which is $\sss$-colored as above determines a morphism in $\sss$ obtained as follows. First slice the diagram into horizontal strips so that each strip is made of juxtaposition of vertical segments or boxes. Then, for each strip, take the monoidal product of the morphisms associated to the vertical segments or boxes. Finally,   compose the resulting morphisms  proceeding from the bottom to the top. For example, given $A,B,X,Y,Z \in \Ob(\sss)$ and morphisms $f\co Y \to Z$, $g\co B \otimes Z \to \un$, $h \co X \to A \otimes B$,  the diagram
$$
\psfrag{h}[Bc][Bc]{\scalebox{.9}{$g$}}
\psfrag{k}[Bc][Bc]{\scalebox{.9}{$f$}}
\psfrag{b}[Bc][Bc]{\scalebox{.9}{$h$}}
\psfrag{X}[Bl][Bl]{\scalebox{.8}{$X$}}
\psfrag{Y}[Bl][Bl]{\scalebox{.8}{$Z$}}
\psfrag{A}[Bl][Bl]{\scalebox{.8}{$A$}}
\psfrag{B}[Bl][Bl]{\scalebox{.8}{$B$}}
\psfrag{Z}[Bl][Bl]{\scalebox{.8}{$Y$}}
\rsdraw{.45}{.9}{Penrose-ex}
$$
represents the morphism
$$
(\id_A \otimes g \otimes \id_X)(h \otimes f \otimes \id_X)=\bigl ( (\id_A \otimes g)(h \otimes f) \bigr ) \otimes \id_X
$$
from $X \otimes Y \otimes X$  to $A \otimes X$.

The functoriality of the monoidal product implies that the morphism associated to a $\sss$-colored diagram is  independent of the way of cutting it into horizontal strips. It also implies that we can push boxes lying on  the same horizontal level  up or down
without changing the morphism represented by the diagram. For example, for all morphisms $f\co X \to Y$ and $g\co U \to
V$ in~$\sss$, we have:
$$
\psfrag{X}[Bl][Bl]{\scalebox{.8}{$X$}}
\psfrag{f}[Bc][Bc]{\scalebox{.9}{$f$}}
\psfrag{Y}[Bl][Bl]{\scalebox{.8}{$Y$}}
\psfrag{U}[Bl][Bl]{\scalebox{.8}{$U$}}
\psfrag{g}[Bc][Bc]{\scalebox{.9}{$g$}}
\psfrag{V}[Bl][Bl]{\scalebox{.8}{$V$}}
\rsdraw{.45}{.9}{ex-penro6b} \, = \, \rsdraw{.45}{.9}{ex-penro7} \, = \, \rsdraw{.45}{.9}{ex-penro6}
$$
which graphically expresses the formulas
$$
f\otimes g =(\id_Y\otimes g) (f \otimes \id_U)= (f\otimes \id_V)( \id_X\otimes g).
$$
Here and in the sequel, the equality sign between  the diagrams     means the equality of the   corresponding morphisms.

The symmetry $\tau_{X,Y} \co  X \otimes Y\to Y \otimes X$ in $\sss$ is depicted as
$$
\psfrag{Z}[Bl][Bl]{\scalebox{.8}{$X$}}
\psfrag{X}[Br][Br]{\scalebox{.8}{$X$}}
\psfrag{A}[Bl][Bl]{\scalebox{.8}{$Y$}}
\psfrag{Y}[Br][Br]{\scalebox{.8}{$Y$}}
\tau_{X,Y}=\,\; \rsdraw{.45}{.9}{symmetry} \,.
$$
The axioms of a symmetry are depicted as follows: for all $X,Y,Z \in \Ob(\sss)$,
\begin{center}
\begin{tabular}{cc}
  \begin{tabular}{@{}c@{}}
  \psfrag{Z}[Bl][Bl]{\scalebox{.8}{$X \otimes Y$}}
\psfrag{X}[Br][Br]{\scalebox{.8}{$X\otimes Y$}}
\psfrag{A}[Bl][Bl]{\scalebox{.8}{$Z$}}
\psfrag{Y}[Br][Br]{\scalebox{.8}{$Z$}}
\rsdraw{.45}{.9}{symmetry} \qquad = \;
\psfrag{Z}[Bl][Bl]{\scalebox{.8}{$X$}}
\psfrag{C}[Bl][Bl]{\scalebox{.8}{$Y$}}
\psfrag{X}[Br][Br]{\scalebox{.8}{$X$}}
\psfrag{B}[Br][Br]{\scalebox{.8}{$Y$}}
\psfrag{A}[Bl][Bl]{\scalebox{.8}{$Z$}}
\psfrag{Y}[Br][Br]{\scalebox{.8}{$Z$}}
\rsdraw{.45}{.9}{symmetry-ax1} \;,\\[2em]
\psfrag{Z}[Bl][Bl]{\scalebox{.8}{$X$}}
\psfrag{X}[Br][Br]{\scalebox{.8}{$X$}}
\psfrag{A}[Bl][Bl]{\scalebox{.8}{$Y \otimes Z$}}
\psfrag{Y}[Br][Br]{\scalebox{.8}{$Y \otimes Z$}}
\rsdraw{.45}{.9}{symmetry} \qquad = \;
\psfrag{Z}[Bl][Bl]{\scalebox{.8}{$Y$}}
\psfrag{C}[Bl][Bl]{\scalebox{.8}{$Z$}}
\psfrag{X}[Br][Br]{\scalebox{.8}{$Y$}}
\psfrag{B}[Br][Br]{\scalebox{.8}{$Z$}}
\psfrag{A}[Bl][Bl]{\scalebox{.8}{$X$}}
\psfrag{Y}[Br][Br]{\scalebox{.8}{$X$}}
\rsdraw{.45}{.9}{symmetry-ax2} \;,
\end{tabular}
&
\psfrag{Y}[Bl][Bl]{\scalebox{.8}{$Y$}}
\psfrag{X}[Br][Br]{\scalebox{.8}{$X$}}
\psfrag{A}[Bl][Bl]{\scalebox{.8}{$X$}}
\psfrag{B}[Br][Br]{\scalebox{.8}{$Y$}}
\qquad \rsdraw{.45}{.9}{symmetry-ax3} \; =
\psfrag{Y}[Bl][Bl]{\scalebox{.8}{$Y$}}
\psfrag{X}[Br][Br]{\scalebox{.8}{$X$}}
\rsdraw{.45}{.9}{symmetry-ax4} \;.
\end{tabular}
\end{center}
The naturality of $\tau$ ensures that we can push boxes across a strand without changing the morphism represented by the diagram: for any morphism $f$ in $\sss$,
$$
\psfrag{f}[Bc][Bc]{\scalebox{.9}{$f$}}
\rsdraw{.45}{.9}{symmetry-bl} \; = \; \rsdraw{.45}{.9}{symmetry-ur} \quad \text{and} \quad
\rsdraw{.45}{.9}{symmetry-br} \; = \; \rsdraw{.45}{.9}{symmetry-ul} \;.
$$
For a detailed treatment of the Penrose graphical calculus, we refer to \cite{JS} or \cite{TV}.

\subsection{Representation of the symmetric group}\label{sect-repres-Sn}
Recall that a symmetric monoidal category provides representations of  the symmetric groups. More precisely, the axioms of a symmetry imply that to any $A \in \Ob(\sss)$ and $n \in \N$ is associated a unique group morphism
$$
\left \{ \begin{array}{ccc}\mathfrak{S}_n & \to & \End_\sss(A^{\otimes n}) \\ \sigma & \mapsto & P_\sigma \end{array} \right.
$$
such that for all~$1 \leq i \leq n-1$, the transposition $(i,i+1)$ is mapped to
$$
P_{(i,i+1)}=\id_{A^{\otimes (i-1)}} \otimes \tau_{A,A} \otimes \id_{A^{\otimes (n-i-1)}}.
$$

\subsection{Duality}\label{sect-duality}
Assume that $\sss$ is \emph{left rigid}, i.e. to each object $X \in \Ob(\sss)$ is associated an object $X^* \in \Ob(\sss)$, called the \emph{left dual} of $X$, together with morphisms
$$
\lev_X\co X^* \otimes X \to \un \quad \text{and} \quad \lcoev_X\co \un \to X \otimes X^*
$$
respectively called \emph{left evaluation} and  \emph{left coevaluation}, such that
\begin{equation*}
(\lev_X \otimes \id_{X^*})(\id_{X^*} \otimes \lcoev_X)=\id_{X^*} \quad \text{and} \quad (\id_X \otimes \lev_X)(\lcoev_X \otimes \id_X)=\id_X.
\end{equation*}
The symmetry of $\sss$ endows $\sss$ with a canonical structure of a pivotal category with \emph{right evaluation} and \emph{right coevaluation} associated to $X \in \Ob(\sss)$ defined respectively by
$$
\rev_X=\lev_X \tau_{X,X^*}\co X \otimes X^* \to \un \quad \text{and} \quad \rcoev_X=\tau_{X,X^*}\lcoev_X\co \un \to X^* \otimes X.
$$
Then $\sss$ is spherical (meaning that the left and right traces induced by the (co)eva\-lua\-tion morphisms coincide) and even ribbon (with trivial twist).
The \emph{trace} of an endomorphism $f \in \End_\sss(X)$ is
$$
\tr(f)=\lev_X(\id_{X^*} \otimes f)\rcoev_X=\rev_X(f \otimes \id_{X^*})\lcoev_X \in \End_\sss(\un).
$$
The \emph{dimension} of an object $X$ is
$$
\dim(X)=\tr(\id_X)=\lev_X\rcoev_X=\rev_X\lcoev_X =\lev_X\tau_{X,X^*}\lcoev_X \in \End_\sss(\un).
$$
The trace is symmetric:   for any morphisms $p\co X \to Y$ and   $q\co Y
\to X$  in~$\sss$,
$$
 \tr(pq)=\tr(qp).
$$
The   trace and dimension are $\otimes$-multiplicative:
$$
 \tr(f\otimes g) =\tr(f) \, \tr(g)   \quad {\text{and}}\quad    \dim(X\otimes Y) = \dim(X) \dim(Y)
$$
for any endomorphisms $f,g$  in~$\sss$ and any object $X,Y$ of $\sss$.

\subsection{Invertible objects}\label{sect-invertible-objects}
An object $I$ of $\sss$ is \emph{invertible} if there is an object $J$ of~$\sss$ such that  $I \otimes J \simeq \un$ (or equivalently $J \otimes I \simeq \un$ because $\sss$ is symmetric). Then~$J$ is unique (up to isomorphism) and is both a left and right dual of $I$. It follows directly from the definition that the monoidal product of two (and so finitely many) invertible objects is an invertible object. Consequently, the isomorphism  classes of invertible objects form a group with  the  multiplication induced by $\otimes$ and the class of $\un$ as the identity element.

Let $I$ be an invertible object of $\sss$. For all $X,Y \in \Ob(\sss)$, the map
$$
\Hom_\sss(X,Y) \to \Hom_\sss(X\otimes I, Y \otimes I), \quad f \mapsto f \otimes \id_I
$$
is bijective, with inverse
$$
\Hom_\sss(X\otimes I, Y \otimes I) \to \Hom_\sss(X,Y), \quad g \mapsto (\id_Y \otimes \psi)(g \otimes \id_J)(\id_X \otimes \psi^{-1})
$$
where $\psi \co I \otimes J \to \un$ is an isomorphism.  In particular,  the map
$$
\End_\sss(\un) \to \End_\sss(I), \quad \alpha \mapsto \alpha\, \id_I
$$
is an isomorphism of monoids (and so $\End_\sss(I)$ is commutative). We denote by~$\norm{\cdot}_I$ the inverse isomorphism: for any $f \in \End_\sss(I)$, there is a unique  $\norm{f}_I \in \End_\sss(\un)$ such that
\begin{equation} \label{eq-bij-invert-ob}
f=\norm{f}_I  \id_I.
\end{equation}

For any invertible object $I$, since $\tau_{I,I} \in  \End_\sss(I \otimes I)$ with $I \otimes I$ invertible, we can consider the scalar
$$
 \sigma_I =\norm{\tau_{I,I}}_{I \otimes I}  \in \End_\sss(\un).
$$
By definition,
\begin{equation} \label{eq-sigma-def}
\tau_{I,I}=\sigma_I \, \id_{I \otimes I}.
\end{equation}
The involutivity of a symmetry implies that
\begin{equation} \label{eq-sigma-invert-ob}
\sigma_I^2=1.
\end{equation}

A left (or a right) dual of an invertible object is invertible.
Assume that $\sss$ is left rigid and endow $\sss$ with the spherical structure canonically induced by the left duality and symmetry (see Section~\ref{sect-duality}).  The uniqueness of a left dual (up to isomorphism) implies that an object $I$ of~$\sss$ is invertible if and only if the left evaluation $\lev_I \co I^* \otimes I \to \un$ is an isomorphism.  Let $I$ be an invertible object of~$\sss$.
Then its dimension $\dim(I) \in \End_\sss(\un)$ is invertible
and
\begin{equation} \label{eq-dimen-invert-ob}
\dim(I)^2=1.
\end{equation}
Indeed, the facts that the dimension of objects is invariant under isomorphisms and is $\otimes$-multiplicative imply that
$$
1=\dim(\un)=\dim(I^* \otimes I)=\dim(I^*)\dim(I).
$$
Then \eqref{eq-dimen-invert-ob} follows from the fact that $\dim(I^*)=\dim(I)$ by the sphericity of $\sss$.
Note that Formulas \eqref{eq-bij-invert-ob} and \eqref{eq-dimen-invert-ob} imply that  for any $f \in \End_\sss(I)$,
$$
\norm{f}_I=\dim(I) \, \tr(f).
$$
Furthermore the scalar $\sigma_I \in \End_\sss(\un)$ is computed by
$$
\sigma_I=\dim(I).
$$
Indeed, since $\sss$ is ribbon with trivial twist, we have $\tr(\tau_{I,I})=\dim(I)$ and so $$\sigma_I=\norm{\tau_{I,I}}_{I \otimes I}=\dim(I \otimes I)\, \tr(\tau_{I,I})=\dim(I)^{3}=\dim(I).$$

\subsection{Example of graded vector spaces}\label{sect-ex-G-vect}
Let $G$ be a group with unit element~$1$. Recall that a vector space $V$ over a field $\kk$ is $G$-graded if it is decomposed as
a direct sum $$U= \bigoplus_{g \in G} V_g$$ of vector subspaces  labeled by  $g\in G$. For $g \in G$, the elements of $V_g$ are said to be homogeneous of degree $g$. For example,   each    $h\in G$  gives rise to a $G$-graded \kt vector space   $\kk_h $ where $(\kk_h)_h=\kk$ and $(\kk_h)_g=0$ for all $g\in G\setminus \{h\}$.
We denote by $G\mti \mathrm{Vect}_\kk$ the category of $G$-graded \kt vector spaces and  \kt linear  grading-preser\-ving homomorphisms. We endow this category with   the   monoidal product    defined on objects   by
$$
U \otimes V=\bigoplus_{g \in G} \, (U \otimes V)_g \quad \text{where} \quad (U \otimes V)_g=\bigoplus_{\substack{h,j \in G \\ \, hj=g}} U_h \otimes_\kk V_j ,
$$
and on morphisms by $f \otimes g=f \otimes_\kk g$. For example,  $\kk_g \otimes \kk_h \simeq \kk_{gh}$ for all $g,h \in G$.
Then  $G\mti \mathrm{Vect}_\kk$ is a monoidal category with unit object $\un=\kk_1$. The group of isomorphism  classes of invertible objects of  $G\mti \mathrm{Vect}_\kk$  is isomorphic to $G$. A representative set of these classes is $\{\kk_g\}_{g \in G}$.

Assume that $G$ is abelian. Any bi-character $\chi\co G \times G \to \kk^*=\kk\setminus \{0\}$ of~$G$ such that $\chi(g,h)\chi(h,g)=1$ for all $g,h \in G$  induces a symmetry for $G\mti \mathrm{Vect}_\kk$ defined~by
$$
\tau_{U,V}(u\otimes v)=\chi(|u|,|v|) \, v\otimes u,
$$
where $U$ and $V$ are $G$-graded \kt vector spaces, $u\in U$ and $v\in V$ are homogeneous elements, and $|x|\in G$ is the degree of a homogeneous element $x$. We denote the resulting symmetric monoidal category by $G\mti \mathrm{Vect}_\kk^\chi$. Note that for any $g \in G$, the scalar \eqref{eq-sigma-def} associated to the invertible object $\kk_g$ of $G\mti \mathrm{Vect}_\kk^\chi$ is computed by
$$
\sigma_{\kk_g}=\chi(g,g).
$$

\subsection{Example of super vector spaces}\label{sect-ex-super-vect}
Let $\kk$ be a field. By Example~\ref{sect-ex-G-vect}, the bi-character $\chi$ of the group $\Z/2\Z=\{0,1\}$ defined by $\chi(g,h)=(-1)^{gh}$ gives rise to the symmetric monoidal category $$\mathrm{SVect}_\kk=(\Z/2\Z)\mti \mathrm{Vect}_\kk^\chi$$ known as the category of super vector spaces over $\kk$. Its symmetry is given by
$$
\tau_{U,V}(u\otimes v)=(-1)^{|u||v|}v\otimes u,
$$
where $U$ and $V$ are objects of $\mathrm{SVect}_\kk$, $u\in U$ and $v\in V$ are homogeneous elements, and $|x|\in \{0,1\}$ is the degree of a homogeneous element $x$. The monoidal unit~$\un$ of $\mathrm{SVect}_\kk$ is defined by $\kk$ in degree $0$ and $0$ in degree~$1$. Denote by $I$ the $(\Z/2\Z)$-graded vector space defined by  0 in degree~0 and $\kk$ in degree 1.
Up to isomorphism, $\un$ and~$I$ are the only invertible objects of $\mathrm{SVect}_\kk$.  Their associated scalar \eqref{eq-sigma-def} are $\sigma_\un=1$ and $\sigma_I=-1$.

\subsection{Hopf algebras}\label{sect-Hopf-alg}
An \emph{algebra} in $\sss$ is an object $A$ of $\sss$ endowed with morphisms $\mu\co A \otimes A \to A$ (the product) and $\eta\co \un \to A$ (the unit) such that
\begin{equation*}
\mu(\mu \otimes \id_A)=\mu(\id_A \otimes \mu) \quad \text{and} \quad \mu(\id_A \otimes \eta)=\id_A=\mu(\eta \otimes \id_A).
\end{equation*}
A \emph{coalgebra} in $\sss$ is an object $C$ of $\sss$ endowed with morphisms $\Delta\co C \to C \otimes C$ (the coproduct) and $\varepsilon\co C \to \un$ (the counit) such that
\begin{equation*}
(\Delta \otimes \id_C)\Delta=(\id_C \otimes \Delta)\Delta \quad \text{and} \quad (\id_C \otimes \varepsilon)\Delta=\id_C=(\varepsilon \otimes \id_C)\Delta.
\end{equation*}
A \emph{bialgebra} in $\sss$ is an object $A$ of~$\sss$ endowed with an algebra structure and a coalgebra structure such that its coproduct $\Delta$ and counit $\varepsilon$ are algebra morphisms (or equivalently, such that its  product
$\mu$  and unit
$\eta$ are coalgebra morphisms), i.e.,
\begin{align*}
\Delta \mu&=(\mu \otimes \mu)(\id_A \otimes \tau_{A,A} \otimes \id_A)(\Delta \otimes \Delta),  & \Delta \eta&=\eta \otimes \eta, \\  \varepsilon \mu &=\varepsilon \otimes \varepsilon, & \varepsilon \eta&=\id_\un.
\end{align*}
An \emph{antipode} for a bialgebra $A$ is a morphism $S\co A \to A$ in $\sss$ such that
\begin{equation*}
\mu(S \otimes \id_A)\Delta=\eta \varepsilon=\mu(\id_A \otimes S)\Delta.
\end{equation*}
If it exists, an antipode is unique and is anti-(co)multipicative, i.e.,
\begin{align*}
  S \mu& =\mu \tau_{A,A}(S \otimes S), & S \eta&=\eta, \\
\Delta S&=(S \otimes S)\tau_{A,A}\Delta, & \varepsilon S &=\varepsilon.
\end{align*}
A \emph{Hopf algebra} in $\sss$ is a bialgebra in $\sss$ which admits an invertible antipode.
A Hopf algebra $A$ in $\sss$ is called \emph{involutory} if its antipode $S$ satisfies $S^2=\id_A$.

Given a Hopf algebra $A$ in $\sss$, we depict its product $\mu$, unit $\eta$,
coproduct $\Delta$, counit $\varepsilon$, and the antipode $S$
as follows:
$$
\psfrag{A}[Bc][Bc]{\scalebox{.9}{$A$}}
\psfrag{n}[Bc][Bc]{\scalebox{.9}{$n$}}
\mu=\,\rsdraw{.35}{.9}{mA2}\;, \quad
\eta=\;\rsdraw{.35}{.9}{uA2}\;, \quad
\Delta=\,\rsdraw{.35}{.9}{cpA2}\;, \quad
\varepsilon=\;\rsdraw{.35}{.9}{epsA2}\;, \quad
S=\;\rsdraw{.35}{.9}{antipode-S2}\;.
$$
Here, the underlying color of all the arcs is the object $A$.

For $n \in \N$, we define recursively the morphisms
\begin{equation}\label{es-mun-deln}
\mu_n \co A^{\otimes n} \to A \mand \Delta_n \co A \to A^{\otimes n}
\end{equation}
as follows:
\begin{align*}
&\mu_0=\eta, && \mu_1=\id_A,  && \mu_{n+1}=\mu(\mu_n \otimes \id_A),\\
&\Delta_0=\varepsilon, && \Delta_1=\id_A, && \Delta_{n+1}=(\Delta_n \otimes\id_A)\Delta.
\end{align*}
It follows from the (co)associativity and the (co)unitality of the (co)product that for all $n_1, \dots,n_k \in \N$,
$$
\mu_{n_1+\cdots +n_k}=\mu_k(\mu_{n_1} \otimes \cdots \otimes \mu_{n_k}) \mand
\Delta_{n_1+\cdots +n_k}=(\Delta_{n_1} \otimes \cdots \otimes \Delta_{n_k})\Delta_k.
$$
We will depict the morphisms $\mu_n$ and $\Delta_n$ as
$$
\psfrag{E}[cr][cr]{\scalebox{1.5}[2.6]{$\{$}}
\psfrag{H}[cr][cr]{\scalebox{1.5}[2.6]{$\}$}}
\psfrag{n}[Bc][Bc]{\scalebox{.9}{$n$}}
\mu_n=\;\rsdraw{.6}{.9}{mu-n}
\mand
\Delta_n=\;\rsdraw{.3}{.9}{Delta-n} \,.
$$

\subsection{Reverse and dual Hopf algebras}\label{sect-dual-reverse}
Denote by $\sss^\reve$ the category opposite to $\sss$ endowed with the opposite monoidal product.
Then~$\sss^\reve$ is a symmetric monoidal category with  the  symmetry induced by that of~$\sss$. For example, for any objects $A,B,X,Y \in \Ob(\sss^\reve)=\Ob(\sss)$,
$$
\Hom_{\sss^\reve}(A \otimes_{\sss^\reve} \! B, X \otimes_{\sss^\reve} \!  Y)=\Hom_{\sss}(Y \otimes X, B \otimes A).
$$
Let $A$ be a  Hopf algebra in $\sss$, with product $\mu$, unit $\eta$, coproduct $\Delta$, counit $\varepsilon$, and antipode $S$. Then $A$ becomes a Hopf algebra in  $\sss^\reve$  with product $\Delta$, unit~$\varepsilon$, coproduct $\mu$, counit $\eta$, and  antipode $S$. This Hopf algebra in $\sss^\reve$ is denoted by~$A^\reve$ and is called the Hopf algebra \emph{reverse} to $A$. If $A$ is involutory, then so is~$A^\reve$.

Assume that $\sss$ is left rigid (and so spherical, see Section~\ref{sect-duality}). The left (co)eva\-lua\-tions induce a  dual functor $?^* \co \sss^\reve \to \sss$ which is a symmetric monoidal equivalence. Thus, if $A$ is a  Hopf algebra in $\sss$, then the image~$A^*$ of $A^\reve$ under this dual functor is a Hopf algebra in $\sss$ called  the Hopf algebra \emph{dual} to~$A$. Explicitly,  the product and  the coproduct of $A^*$ are
$$
\psfrag{B}[Bl][Bl]{\scalebox{.9}{$A^*$}}
\psfrag{c}[Bc][Bc]{\scalebox{.8}{$\lcoev_A$}}
\psfrag{a}[Bc][Bc]{\scalebox{.8}{$\lev_A$}}
\mu_{A^*}=\, \rsdraw{.45}{.9}{mu-Adual}\,, \qquad \Delta_{A^*}= \, \rsdraw{.45}{.9}{delta-Adual}\,,
$$
and the unit, counit, and antipode of $A^*$ are
$$
\psfrag{B}[Bl][Bl]{\scalebox{.9}{$A^*$}}
\psfrag{c}[Bc][Bc]{\scalebox{.8}{$\lcoev_A$}}
\psfrag{a}[Bc][Bc]{\scalebox{.8}{$\lev_A$}}
\eta_{A^*}=\, \rsdraw{.45}{.9}{unit-Adual}\,, \qquad   \Delta_{A^*}= \, \rsdraw{.45}{.9}{counit-Adual}\,,
\qquad  S_{A^*}= \, \rsdraw{.45}{.9}{antipode-Adual} \,.
$$

\subsection{Example of a super Hopf algebra}\label{sect-ex-super-HA1}
Let $\kk$ be a field. Pick a positive integer $n$ not divisible by the characteristic of $\kk$ and an element $\param\in\kk$. Consider the $\kk$-algebra $\Hn$ generated by $t$ and $\ns$ submitted to the relations
$$
t^n=1, \quad \ns t = t \ns, \quad \ns^2=\param (t^2-1).
$$
Note that $\Hn$ is finite-dimensional (over $\kk$) with basis $\{t^k,\, \ns t^k\}_{0 \leq k \leq n-1}$  and $t$ is invertible with inverse $t^{-1}=t^{n-1}$.
By assigning the degrees
$$
|t|=0 \mand |\ns|=1,
$$
we obtain that  $\Hn$ is an algebra in the symmetric monoidal category $\mathrm{SVect}_\kk$ of super vector  \kt spaces (see Section~\ref{sect-ex-super-vect}). Moreover, it becomes a Hopf algebra in $\mathrm{SVect}_\kk$ with coproduct $\Delta$, counit $\varepsilon$, and antipode $S$ given by
\begin{align*}
& \Delta (t)=t\otimes t, && \varepsilon(t)=1, && S(t)=t^{-1}, \\
& \Delta(\ns)=t \otimes \ns +\ns \otimes 1, &&  \varepsilon(\ns)=0, && S(\ns)=-\ns t^{-1}.
\end{align*}
Since $S^2(t)=t$ and $S^2(\ns)=\ns$, the Hopf algebra $\Hn$ in $\mathrm{SVect}_\kk$ is involutory.

\section{Heegaard diagrams and their tensors}\label{sec2}

\subsection{Heegaard diagrams}\label{sect-Heegaard-diag}
By a \emph{circle} on a surface $\Sigma$, we mean the image of an embedding $S^1 \hookrightarrow \Sigma$.

A \emph{Heegaard diagram} is a triple $D=(\Sigma,\up,\low)$ where
\begin{itemize}
\item $\Sigma$ is a closed, connected, and oriented surface of genus $g\geq 1$,
\item $\up$ is a set of $g$ pairwise disjoint circles on $\Sigma$ such that
$\Sigma \setminus \cup_{u \in \up} u$
  is connected,
\item $\low$ is a set of $g$ pairwise disjoint circles on $\Sigma$ such that
$\Sigma \setminus \cup_{l \in \low} l$ is connected,
\item each $u \in \up$ is transverse to each $l \in \low$.
\end{itemize}
The circles in  $\up$ (resp.\@ $\low$) are called the \emph{upper} (resp.\@ \emph{lower}) \emph{circles} of the diagram. The last condition implies that the set $(\cup_{u \in \up} u) \cap (\cup_{l \in \low}l)$ is finite. Its elements are called the \emph{intersection points} of $D$.

For $u \in \up$, we denote by $|u|$ the number of intersection points of $D$ lying in $u$. Likewise, for $l \in \low$, we denote by $|l|$ the number of intersection points of $D$ lying in $l$. Then the total number of intersection points of $D$ is computed by
$$
\sum_{u \in \up} |u|=\sum_{l \in \low} |l|.
$$

\subsection{Heegaard diagrams and 3-manifolds}\label{secct-Heegaard-diag-3man}
To any Heegaard diagram is associated a closed connected oriented 3-manifold $M_D$ obtained as follows.
Consider the 3-manifold $\Sigma \times [0,1]$. Glue a 2-handle along a tubular neighborhood of each circle $l \times \{0\}$ with $l \in \low$. Glue a 2-handle along a tubular neighborhood of each circle $u \times \{1\}$ with $u \in \up$. The result is compact connected 3-manifold whose boundary is a number of spheres. We eliminate these spherical boundary components by gluing in 3-balls to obtain $M_D$. We endow $M_D$ with the orientation extending the product orientation of $\Sigma \times [0,1]$.

Any closed connected oriented 3-manifold may be presented this way by a Heegaard diagram. Moreover, the Reidemeister--Singer theorem asserts that two Heegaard diagrams give rise to homeomorphic oriented 3-manifolds if and only if one can be obtained from the other by a finite sequence of four types of moves (and their inverses), namely:  a homeomorphism of the surface, an isotopy of the diagram (2-point move), a stabilization, and a handle slide (sliding a circle past another), see Section~\ref{sect-proof-main-thm} for details.

\subsection{Oriented, based,  and ordered Heegaard diagrams}\label{sect-opoHD}
A Heegaard diagram is \emph{oriented} if each  lower/upper circle is oriented. A Heegaard diagram is  \emph{based} if each lower/upper circle is endowed with a base point distinct from the intersection points. A Heegaard diagram  $D=(\Sigma,\up,\low)$ is  \emph{ordered} if the sets $\up$ and~$\low$ are (fully) ordered.

The intersection points of an oriented Heegaard diagram $D=(\Sigma,\up,\low)$ have a sign defined as follows. Let $c$ be an intersection point between an upper circle $u$ and a lower circle $l$. Then $c$ is \emph{positive} if $(d_cu,d_c l)$ is a positively-oriented basis of~$\Sigma$ and \emph{negative} otherwise.

Let $D=(\Sigma,\up,\low)$ be an ordered oriented based Heegaard diagram. Then the set $\I$ of intersection points of $D$ inherits a total order from $\up$ as follows. For $u \in \up$, denote by $\I_u$ the set of intersection points of $D$ lying in $u$. Enumerating the elements of $\I_u$ starting from the base point of $u$ and following the orientation of $u$ defines a total order on $\I_u$. Then $\I=\amalg_{u \in \up} \I_u$ is endowed with the lexicographic order induced by the order of $\up$ and that of  $\I_u$. In other words, for $c \in \I_u$ and $c' \in \I_{u'}$ with $u,u'\in\up$, we let $c \leq c'$ if $u < u'$ in $\up$ or if $u=u'$ and $c \leq c'$ in~$\I_u$. Similarly, replacing $\up$ with $\low$, we obtain another total order on the set $\I$ inherited from~$\low$.

\subsection{Tensors associated to Heegaard diagrams using Hopf algebras}\label{sect-asso-tensor-HD}
Let $A$ be a Hopf algebra in a symmetric monoidal category $\sss$. Let $D=(\Sigma,\up,\low)$ be an ordered oriented  based Heegaard diagram. Following Kuperberg \cite{Ku1}, we associate to $D$ a morphism in $\sss$
$$
K_A(D) \co A^{\otimes g} \to A^{\otimes g},
$$
where $g$ is the genus of $\Sigma$, as follows. Denote by $\I$ the set of intersection points of~$D$.
By Section~\ref{sect-opoHD}, the set $\I$ inherits two total orders from $\up$ and $\low$. Denote by $\I_\up$ (respectively $\I_\low$) the set $\I$ endowed with the order induced by $\up$ (respectively~$\low$). Let $N$ be the cardinality of $\I$. Consider the permutation $\sigma \in \mathfrak{S}_N$ such that if an intersection point is the $i$-th element of $\I_\low$, then it is the $\sigma(i)$-th element of~$\I_\up$. By Section~\ref{sect-repres-Sn}, we derive from~$\sigma$ and the symmetry of $\sss$ an automorphism
$$
P_\sigma \co A^{\otimes N} \to A^{\otimes N}.
$$
Set
$$
S_\low=\bigotimes_{c \in \I_\low} S^{\kappa_c}  \co A^{\otimes N} \to A^{\otimes N}  \mand
S_\up=\bigotimes_{c \in \I_\up} S^{\kappa_c}  \co A^{\otimes N} \to A^{\otimes N},
$$
where $S$ is the antipode of $A$ and
$$
\kappa_c=
\begin{cases}
0 & \text{if $c$ is positive,} \\ 1 & \text{if $c$ is negative.}
\end{cases}
$$
Here (and in what follows), a monoidal product indexed by a fully ordered set is taken following the order of the set of indices (as in Section~\ref{sect-conventions}). Note that the naturality of symmetry and the definition of $\sigma$ imply that
$$
P_\sigma S_\low= S_\up P_\sigma.
$$
For $l \in \low$ and $u \in \up$, consider the integers $|u|$ and $|l|$ defined in Section~\ref{sect-Heegaard-diag} and the morphisms $\Delta_{|l|}$ and $\mu_{|u|}$ defined in Section~\ref{sect-Hopf-alg}. Recall that $N$ is computed by
$$
N=\sum_{u \in \up} |u|=\sum_{l \in \low} |l|.
$$
Set
$$
\Delta_\low=\bigotimes_{l \in \low} \Delta_{|l|}  \co A^{\otimes g} \to A^{\otimes N}  \mand
\mu_{\hspace{1pt}\up}=\bigotimes_{u \in \up} \mu_{|u|} \co A^{\otimes N} \to A^{\otimes g}.
$$
Finally, define
\begin{equation}\label{eq-KAD}
K_A(D)=\mu_{\hspace{1pt}\up} P_\sigma S_\low  \Delta_\low =  \mu_{\hspace{1pt}\up} S_\up P_\sigma  \Delta_\low \co A^{\otimes g} \to A^{\otimes g}.
\end{equation}

\subsection{Examples}\label{sect-ex-KAD}
1) Let $\T=\R^2/\Z^2$ be the torus with orientation induced by the counterclockwise orientation of $\R^2$.  Consider the following circles on $\T$:
$$
u= (1/2,0)+\R(0,1) + \Z^2 \mand l=\R(0,1) + \Z^2.
$$
Then $D_0=(\T,\{u\},\{l\})$ is a Heegaard diagram which represents $S^1 \times S^2$. Since $D_0$ has no intersection points, we obtain that
$$
K_A(D_0)=\mu_0 \,\Delta_0= \;\rsdraw{.4}{.9}{KAD-S1S2}
$$
for any choice for the orientations and base points of the circles.

2) Consider as above the oriented torus $\T=\R^2/\Z^2$. Pick a positive integer $p$ and define the following circles on $\T$:
$$
u=\R(1,1/p) + \Z^2 \mand l=\R(0,1) + \Z^2.
$$
Then $D_p=(\T,\{u\},\{l\})$ is a Heegaard diagram which represents the lens space $L(p,1)$. Clearly, it is ordered (since the sets of upper/lower circles are singletons). We provide the circles $u$ an $l$ with the orientation induced by the standard orientation of~$\R$. We endow $u$ with the base point $(1/2,1/(2p))$ and $l$ with the base point $(0,1/2p)$. Then $D_p$ is an ordered oriented  based  Heegaard diagram with $p$ intersection points which are all positive The permutation associated to $D_p$ is the identity and so we have $P_{(1 2 \cdots p)}=\id_{A^{\otimes p}}$. Then
$$
K_A(D)=\mu_p \,\Delta_p =\;\rsdraw{.4}{.9}{KAD-Ln1} \;.
$$

3) Let us consider the following oriented based Heegaard diagram $D=(\Sigma,\up,\low)$ of genus 2  used by Poincar\'e to define  his celebrated homology 3-sphere  (see \cite{Po}):
$$
\psfrag{1}[Bc][Bc]{\scalebox{.9}{$1$}}
\psfrag{2}[Bc][Bc]{\scalebox{.9}{$2$}}
\psfrag{3}[Bc][Bc]{\scalebox{.9}{$3$}}
\psfrag{4}[Bc][Bc]{\scalebox{.9}{$4$}}
\psfrag{5}[Bc][Bc]{\scalebox{.9}{$5$}}
\psfrag{6}[Bc][Bc]{\scalebox{.9}{$6$}}
\psfrag{7}[Bc][Bc]{\scalebox{.9}{$7$}}
\psfrag{a}[Bc][Bc]{\scalebox{.9}{$a$}}
\psfrag{b}[Bc][Bc]{\scalebox{.9}{$b$}}
\psfrag{c}[Bc][Bc]{\scalebox{.9}{$c$}}
\psfrag{d}[Bc][Bc]{\scalebox{.9}{$d$}}
\psfrag{e}[Bc][Bc]{\scalebox{.9}{$e$}}
\psfrag{l}[Bc][Bc]{\scalebox{.9}{$l_1$}}
\psfrag{k}[Bc][Bc]{\scalebox{.9}{$l_1$}}
\psfrag{x}[Bc][Bc]{\scalebox{.9}{$l_2$}}
\psfrag{y}[Bc][Bc]{\scalebox{.9}{$l_2$}}
\psfrag{u}[Bc][Bc]{\scalebox{.9}{$u_1$}}
\psfrag{v}[Bc][Bc]{\scalebox{.9}{$u_2$}}
\psfrag{S}[Bc][Bc]{\scalebox{.9}{$\Sigma$}}
D= \;\rsdraw{.45}{.9}{HD-Poincare} \;.
$$
This pictures represents the surface $\Sigma$ cut along the lower circles $l_1$ and $l_2$. The arrows and the dots represent the orientations and  base points of the circles. We order the sets $\up=\{u_1,u_2\}$ and $\low=\{l_1,l_2\}$ by setting $u_1<u_2$ and $l_1<l_2$. Then~$D$ is an ordered oriented based Heegaard diagram. We have:
\begin{align*}
& l_1\cap (u_1 \cup u_2)=\{1<2<3<4<5<6<7\},\\
& l_2\cap (u_1 \cup u_2)=\{a<b<c<d<e\}, \\
& u_1\cap (l_1 \cup l_2)=\{1<2<3<4<a<6<c\}, \\
& u_2\cap (l_1 \cup l_2)=\{b<7<d<e<5\}.
\end{align*}
Thus the set of intersection points ordered by $\low$ is
$$\{1<2<3<4<5<6<7<a<b<c<d<e\}$$
and the set of intersection points ordered by $\up$ is
$$\{1<2<3<4<a<6<c<b<7<d<e<5\}.$$
Consequently the permutation $\sigma \in \mathfrak{S}_{12}$ associated to $D$ is the cycle
$$
\sigma=(5,12,11,10,7,9,8).
$$
Since the positive intersection points are $1,2,3,4,d,e$ and the negative ones are $5,6,7,a,b,c$, we obtain that
$$
S_\low=\id_{A^{\otimes 4}} \otimes S^{\otimes 6} \otimes \id_{A^{\otimes 2}}
 \mand S_\up=\id_{A^{\otimes 4}} \otimes S^{\otimes 5} \otimes \id_{A^{\otimes 2}} \otimes S.
$$
Then
$$
K_A(D)=(\mu_7 \otimes \mu_5) P_\sigma  S_\low (\Delta_7 \otimes \Delta_5)=(\mu_7 \otimes \mu_5) S_\up P_\sigma  (\Delta_7 \otimes \Delta_5)  \co A^{\otimes 2} \to A^{\otimes 2}.
$$
Graphically:
$$
K_A(D)=\; \rsdraw{.45}{.9}{KAD-Poincare} \;.
$$

\section{Good pairs and 3-manifolds invariants}\label{sect-inv-from-gp}\label{sec3}

In this section, $A=(A,\mu,\eta,\Delta,\varepsilon,S)$ is an involutory Hopf algebra in a symmetric monoidal category $\sss$ with symmetry $\tau$. We derive a topological invariant of closed oriented 3-manifolds by evaluating the morphisms \eqref{eq-KAD} using so-called good pairs of~$A$ that we first define.

\subsection{Good pairs}\label{sect-def-good-pair}
A \emph{good pair} for $A$ is a pair $(\phi\co A \to I,\, \Omega\co I \to A)$ of morphisms in $\sss$, where $I$ is  an invertible object of $\sss$, satisfying the following four axioms:
\begin{enumerate}
\labeli
\item[(GP1)] $\phi\Omega=\id_I\in \End_\sss(I)$.
Graphically:
$$
\rsdraw{.4}{.9}{gp-1} \,=\; \rsdraw{.4}{.9}{gp-1b}\;.
$$
Here and in what follows, a black-colored strand is colored by the object~$A$, a red-colored strand is colored by the object~$I$, and we depict the morphisms $\phi$ and $\Omega$ as
$$
\phi=\,\rsdraw{.4}{.9}{phi} \mand \Omega=\,\rsdraw{.4}{.9}{Omega}\;\,.
$$
\item[(GP2)] Setting $H=(\mu \otimes\id_A)(\id_A \otimes\Delta) \co A \otimes A \to A \otimes A$, we require that
$$(\phi \otimes \phi)H=\phi \otimes \phi \mand  H(\Omega \otimes \Omega)=\Omega \otimes \Omega.$$
Graphically:
$$
\rsdraw{.4}{.9}{gp-2p} \,=\;\, \rsdraw{.4}{.9}{gp-2b} \mand \rsdraw{.4}{.9}{gp-2cp} \,=\;\, \rsdraw{.4}{.9}{gp-2d}\;\,.
$$
\item[(GP3)] Setting $\nu_{(\phi,\Omega)}=\norm{\phi S \Omega}_I \in \End_\sss(\un)$, we require that
$$
\phi S=\nu_{(\phi,\Omega)}\, \phi \mand S \Omega=\nu_{(\phi,\Omega)}\,\Omega.
$$
Graphically:
$$
\rsdraw{.45}{.9}{gp-3} \,=\,\nu_{(\phi,\Omega)}\;\, \rsdraw{.45}{.9}{gp-3b} \mand
\rsdraw{.45}{.9}{gp-3c} \,=\,\nu_{(\phi,\Omega)}\;\, \rsdraw{.45}{.9}{gp-3d} \;\,.
$$
\item[(GP4)]
There is a morphism $f \co A \to A$ in $\sss$ such that
$$
\phi \mu \tau_{A,A}=\phi \mu (f \otimes \id_A) \mand \Delta \Omega=(f  \otimes \id_A)\Delta \Omega.
$$
Graphically:
$$
\psfrag{h}[Bc][Bc]{\scalebox{.9}{$f$}}
\rsdraw{.4}{.9}{gp-4} \,=\, \rsdraw{.4}{.9}{gp-4bn} \; \mand \;
\rsdraw{.4}{.9}{gp-4p} \,=\, \rsdraw{.4}{.9}{gp-4bpn}\;.
$$
\item[(GP5)]
There is a morphism  $h \co A \to A$ in $\sss$ such that
$$
\tau_{A,A} \Delta  \Omega = (\id_A \otimes h)\Delta \Omega \mand  \phi \mu=\phi \mu (\id_A \otimes h).
$$
Graphically:
$$
\psfrag{h}[Bc][Bc]{\scalebox{.9}{$h$}}
\rsdraw{.4}{.9}{gp-4c} \,=\, \rsdraw{.4}{.9}{gp-4dn}\; \mand \;
\rsdraw{.4}{.9}{gp-4cp} \,=\, \rsdraw{.4}{.9}{gp-4dpn}\;.
$$
\end{enumerate}
\medskip

In the next lemma, we note some properties of a good pair.
\begin{lemma}\label{lem-gp}
Let  $(\phi,\Omega)$ be a good pair for $A$. Then:
\begin{enumerate}
\labela
\item $\nu_{(\phi,\Omega)}^2=1$.
\item Setting $H'=(\id_A \otimes \mu)(\Delta \otimes \id_A) \co A \otimes A \to A \otimes A$, we have:
$$(\phi \otimes \phi)H'=\phi \otimes \phi \mand  H'(\Omega \otimes \Omega)=\Omega \otimes \Omega.$$
Graphically:
$$
\rsdraw{.4}{.9}{gp-2} \,=\;\, \rsdraw{.4}{.9}{gp-2b} \mand \rsdraw{.4}{.9}{gp-2c} \,=\;\, \rsdraw{.4}{.9}{gp-2d}\;\,.
$$
\item For any $\epsilon \in\{0,1\}$,
$$
(\id_A \otimes fS^\epsilon \otimes \id_A)\Delta_3 \Omega=(\id_A \otimes S^\epsilon \otimes \id_A)\Delta_3 \Omega
$$
where $f$ is as in Axiom {\rm (GP4)}. Graphically:
$$
\psfrag{h}[Bc][Bc]{\scalebox{.9}{$f$}}
\rsdraw{.45}{.9}{gp-ppt-3n} \;=\; \rsdraw{.45}{.9}{gp-ppt-4n}
\, \mand \;
\rsdraw{.45}{.9}{gp-ppt-1} \;=\; \rsdraw{.45}{.9}{gp-ppt-2}\,.
$$
\item For any $\epsilon \in\{0,1\}$,
$$
\phi \mu_3(\id_A \otimes S^\epsilon h \otimes \id_A)=\phi \mu_3(\id_A \otimes S^\epsilon \otimes \id_A)
$$
where $h$ is as in Axiom {\rm (GP5)}. Graphically:
$$
\psfrag{h}[Bc][Bc]{\scalebox{.9}{$h$}}
\rsdraw{.4}{.9}{gp-ppt-7n} \;=\; \rsdraw{.4}{.9}{gp-ppt-8n}
\, \mand \;
\rsdraw{.4}{.9}{gp-ppt-5} \;=\; \rsdraw{.4}{.9}{gp-ppt-6}\,.
$$
\end{enumerate}
\end{lemma}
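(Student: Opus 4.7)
I will prove the four parts in the order stated, with (a) and (b) admitting short arguments and (c), (d) requiring a more elaborate combination of the axioms.

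\textbf{Part (a).} The key is to combine GP1, GP3, and involutivity. Since $\phi\Omega=\id_I$ by GP1 and $S^2=\id_A$, one has $\id_I=\phi\Omega=\phi S^2\Omega=(\phi S)(S\Omega)$. Applying GP3 twice this equals $(\nu_{(\phi,\Omega)}\,\phi)(\nu_{(\phi,\Omega)}\,\Omega)=\nu_{(\phi,\Omega)}^{\,2}\,\phi\Omega=\nu_{(\phi,\Omega)}^{\,2}\,\id_I$. Since $I$ is invertible, the map $\alpha\mapsto\alpha\,\id_I$ from $\End_\sss(\un)$ to $\End_\sss(I)$ is a bijection, so $\nu_{(\phi,\Omega)}^{\,2}=1$.

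\textbf{Part (b).} The plan is to first establish the identity
$$H'=(S\otimes S)\,\tau_{A,A}\,H\,\tau_{A,A}\,(S\otimes S),$$
which follows from anti-multiplicativity $S\mu=\mu\tau_{A,A}(S\otimes S)$, anti-comultiplicativity $\Delta S=(S\otimes S)\tau_{A,A}\Delta$, and $S^2=\id_A$. Given this, one computes $(\phi\otimes\phi)H'$ by pulling $\phi S$ through with GP3, applying GP2 to the internal $H$, using naturality of $\tau$ and $\tau_{I,I}^{\,2}=\id$, and finally collecting a factor of $\nu_{(\phi,\Omega)}^{\,4}=1$ via part~(a). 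The computation for $H'(\Omega\otimes\Omega)$ is entirely analogous.

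\textbf{Parts (c) and (d), case $\epsilon=0$.} The core idea is a ``cyclic-shift'' argument. Focusing on (c), one first derives from GP5(a), $(\Delta\otimes\id)$, and the relation $(\Delta\otimes\id_A)\tau_{A,A}=\tau_{A,A\otimes A}(\id_A\otimes\Delta)$ that
$$\tau_{A,A\otimes A}\,\Delta_3\Omega=(\id_A\otimes\id_A\otimes h)\,\Delta_3\Omega,$$
hence, setting $R=(h\otimes\id_A\otimes\id_A)\,\tau_{A\otimes A,A}$, one obtains $R\,\Delta_3\Omega=\Delta_3\Omega$. From GP4(b), applying $(\id_A\otimes\Delta)$ gives $(f\otimes\id_A\otimes\id_A)\Delta_3\Omega=\Delta_3\Omega$. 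A direct check of naturality shows $R\circ(f\otimes\id_A\otimes\id_A)=(\id_A\otimes f\otimes\id_A)\circ R$, from which
$$(\id_A\otimes f\otimes\id_A)\Delta_3\Omega=(\id_A\otimes f\otimes\id_A)R\,\Delta_3\Omega=R(f\otimes\id_A\otimes\id_A)\Delta_3\Omega=R\,\Delta_3\Omega=\Delta_3\Omega.$$
Part (d) for $\epsilon=0$ is proved by a symmetric argument, exchanging the roles of GP4 and GP5 and of $\Delta_3\Omega$ with $\phi\mu_3$.

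\textbf{Parts (c) and (d), case $\epsilon=1$.} The same cyclic-shift strategy applies, but the antipode inserted in the middle factor must be handled. The key auxiliary identity, obtained by applying $(S\otimes S)$ to GP4(b) and using $\Delta S=(S\otimes S)\tau_{A,A}\Delta$ together with GP3, is
$$(S\otimes S)\Delta\Omega=\nu_{(\phi,\Omega)}(\id_A\otimes h)\Delta\Omega,$$
together with its analogues after further application of $S^2=\id_A$. These let one express $(\id_A\otimes S\otimes\id_A)\Delta_3\Omega$ and $(\id_A\otimes fS\otimes\id_A)\Delta_3\Omega$ in a form in which the $\epsilon=0$ cyclic-shift identity can be invoked.

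\textbf{Main obstacle.} The $\epsilon=1$ cases of (c) and (d) are the principal difficulty: naively running the cyclic shift on $(\id_A\otimes S\otimes\id_A)\Delta_3\Omega$ produces a commutator of $f$ with $S$ (or of $h$ with $S$) which does not vanish from the axioms in isolation. Overcoming this requires combining the $S$-conjugated forms of GP4(b) and GP5(a) with GP3, involutivity, and $\nu_{(\phi,\Omega)}^{\,2}=1$ from part~(a) to produce a ``mirror'' invariance statement that, together with the cyclic shift $R$, dispatches the middle-factor triviality.
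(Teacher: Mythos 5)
Your parts (a) and (b) and the $\epsilon=0$ halves of (c) and (d) are correct and essentially coincide with the paper's argument: (a) is the same chain $\id_I=\phi S^2\Omega=\nu_{(\phi,\Omega)}^2\,\phi\Omega$; your conjugation formula $H'=(S\otimes S)\tau_{A,A}H\tau_{A,A}(S\otimes S)$ equals the paper's $\tau_{A,A}(S\otimes S)H(S\otimes S)\tau_{A,A}$ by naturality, and the bookkeeping $\tau_{I,I}^2=\id$, $\nu_{(\phi,\Omega)}^4=1$ is right; and your cyclic-shift operator $R=(h\otimes\id_A\otimes\id_A)\tau_{A\otimes A,A}$ with $R\,\Delta_3\Omega=\Delta_3\Omega$ (from (GP5) and coassociativity) combined with $(f\otimes\id_A\otimes\id_A)\Delta_3\Omega=\Delta_3\Omega$ (from (GP4)) and the naturality identity $(\id_A\otimes f\otimes\id_A)R=R(f\otimes\id_A\otimes\id_A)$ is exactly the paper's diagrammatic proof of (c) for $\epsilon=0$, written in formulas.

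The $\epsilon=1$ cases, however, are a genuine gap: you explicitly defer the decisive step to an unproven ``mirror invariance statement.'' Moreover, the one auxiliary identity you do state, $(S\otimes S)\Delta\Omega=\nu_{(\phi,\Omega)}(\id_A\otimes h)\Delta\Omega$, while true, follows from (GP5)(a) and (GP3) (via $(S\otimes S)\Delta=\tau_{A,A}\Delta S$), not from (GP4)(b) as you claim, and it involves $h$ rather than $f$, so it does not bear on part (c). The missing idea is simpler than a commutator analysis: anti-comultiplicativity and involutivity of $S$ give the order-reversal identity
\begin{equation*}
(\id_A\otimes S\otimes\id_A)\Delta_3=(S\otimes\id_A\otimes S)\,P_{(1\,3)}\,\Delta_3\,S,
\end{equation*}
where $P_{(1\,3)}$ is the permutation morphism of Section~\ref{sect-repres-Sn} reversing the three factors. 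Composing with $\Omega$ and using (GP3) yields $(\id_A\otimes S\otimes\id_A)\Delta_3\Omega=\nu_{(\phi,\Omega)}(S\otimes\id_A\otimes S)P_{(1\,3)}\Delta_3\Omega$, and likewise $(\id_A\otimes fS\otimes\id_A)\Delta_3\Omega=\nu_{(\phi,\Omega)}(S\otimes\id_A\otimes S)P_{(1\,3)}(\id_A\otimes f\otimes\id_A)\Delta_3\Omega$, since $P_{(1\,3)}$ fixes the middle tensor factor and so commutes with $\id_A\otimes f\otimes\id_A$ by naturality. The $\epsilon=0$ case then removes the $f$, and the two sides agree; no commutator of $f$ with $S$ ever arises. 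This is precisely the reduction the paper performs (its steps cited as ``anti-comultiplicativity and involutivity,'' ``(GP3),'' and ``Part (c) for $\epsilon=0$''), so you should replace the hand-waved ``mirror'' step with this explicit identity; the same remark applies to part (d) with $\mu_3$, $\phi$, and $h$ in place of $\Delta_3$, $\Omega$, and $f$.
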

\begin{proof}
Let us prove Part (a). We have:
$$
1 \overset{(i)}{=} \norm{\phi \Omega}_I \overset{(ii)}{=} \norm{\phi S^2 \Omega}_I \overset{(iii)}{=} \nu_{(\phi,\Omega)}  \norm{\phi S \Omega}_I \overset{(iv)}{=} \nu_{(\phi,\Omega)}^2 \norm{\phi \Omega}_I  \overset{(v)}{=} \nu_{(\phi,\Omega)}^2.
$$
Here $(i)$ and $(v)$ follow from (GP1), $(ii)$ from the involutivity of $S$, and $(iii)$ and~$(iv)$ from (GP3).
Part (b) follows from Axioms (GP2)-(GP3), Part (a), and the fact that
$$
H'=\tau_{A,A}(S \otimes S) H (S \otimes S) \tau_{A,A}.
$$
Let us prove Part (c) for $\epsilon=0$. We have:
\begin{gather*}
\psfrag{h}[Bc][Bc]{\scalebox{.9}{$h$}}
\rsdraw{.45}{.9}{prog-1}
\;\;\overset{(i)}{=}\; \rsdraw{.45}{.9}{prog-2}
\;\overset{(ii)}{=}\;\, \rsdraw{.45}{.9}{prog-3}
\;\;\,\overset{(iii)}{=}\;\;\, \rsdraw{.45}{.9}{prog-4}\;\,.
\end{gather*}
Here $(i)$ and $(iii)$ follow from the coassociativity of $\Delta$ and the naturality of $\tau$, and $(ii)$ from (GP5). Then
\begin{gather*}
\psfrag{h}[Bc][Bc]{\scalebox{.9}{$h$}}
\psfrag{k}[Bc][Bc]{\scalebox{.9}{$f$}}
\rsdraw{.45}{.9}{p27-1}
\;\;\,\overset{(i)}{=}\;\;\, \rsdraw{.45}{.9}{p27-2}
\;\;\,\overset{(ii)}{=}\;\;\, \rsdraw{.45}{.9}{p27-3n} \\[.6em]
\psfrag{h}[Bc][Bc]{\scalebox{.9}{$h$}}
\psfrag{k}[Bc][Bc]{\scalebox{.9}{$f$}}
\overset{(iii)}{=}\; \rsdraw{.45}{.9}{p27-4n}
\;\;\,\overset{(iv)}{=}\;\;\, \rsdraw{.45}{.9}{p27-5}
\;\;\overset{(v)}{=}\;\;\; \rsdraw{.45}{.9}{p27-6}\;\,.
\end{gather*}
Here $(i)$ and $(v)$ follow from the latter computation, $(ii)$ from the naturality of $\tau$ and the coassociativity of $\Delta$,
$(iii)$ from (GP4), and $(iv)$ from the coassociativity of~$\Delta$.

Let us prove Part (c) for $\epsilon=1$. It follows from the anti-comultiplicativity and involutivity of the antipode that
\begin{gather*}
\rsdraw{.46}{.9}{p27-7}
\;\;=\;\;\, \rsdraw{.46}{.9}{p27-8}\;\,.
\end{gather*}
Then
\begin{gather*}
\psfrag{h}[Bc][Bc]{\scalebox{.9}{$f$}}
\rsdraw{.45}{.9}{p27-9}
\;\;\, \overset{(i)}{=}\;\;\, \rsdraw{.45}{.9}{p27-10}
\;\;\, \overset{(ii)}{=} \;\, \nu_{(\phi,\Omega)} \;\;\; \rsdraw{.45}{.9}{p27-11}\\[.6em]
\overset{(iii)}{=} \;\, \nu_{(\phi,\Omega)} \;\;\; \rsdraw{.45}{.9}{p27-12}
\;\;\, \overset{(iv)}{=}\;\;\, \rsdraw{.45}{.9}{p27-13}
\;\;\, \overset{(v)}{=}\;\;\; \rsdraw{.45}{.9}{p27-14}\;\,.
\end{gather*}
Here $(i)$ and $(v)$ follow from the latter equality, $(ii)$ from (GP3) and the naturality of~$\tau$, $(iii)$ from Part (c) for $\epsilon=0$, and $(iv)$ from (GP3). This completes the proof of Part~(c). Part (d) is proved similarly.
\end{proof}

\subsection{Remark}\label{rem-revese-gp}
Let $\phi\co A \to I$ and $\Omega\co I \to A$ be morphisms in $\sss$, where $I$ is  an invertible object of $\sss$.
Consider the reverse Hopf algebra $A^\reve$ in $\sss^\reve$ (see Section~\ref{sect-dual-reverse}). Then it follows directly from the definitions (by just rotating the diagrams by~$180^\circ$) that  $(\phi,\Omega)$ is a good pair of $A$ if and only if $(\Omega,\phi)$ is a good pair of~$A^\reve$.

\subsection{Invariants of 3-manifolds from good pairs}\label{sect-main-result}
Let $(\phi\co A \to I,\, \Omega\co I \to A)$ be
a good pair of $A$. We associate to any oriented closed connected 3-manifold $M$ a scalar
$$
K_{(\phi,\Omega)}(M) \in \End_\sss(\un)
$$
as follows. Pick a Heegaard diagram $D=(\Sigma,\up,\low)$ of $M$.  Endow each upper and lower circle with an orientation and a base point, and pick (full) orders for the sets~$\up$ and $\low$. Then $D$ is an ordered oriented based Heegaard diagram.  According to the recipe described in Section~\ref{sect-asso-tensor-HD}, the Hopf algebra $A$  associates to $D$ the morphism
$$
K_A(D) \co A^{\otimes g} \to A^{\otimes g}
$$
in $\sss$, where $g$ is the genus of the surface $\Sigma$. Then
$$
\phi^{\otimes g} \circ K_A(D) \circ \Omega^{\otimes g}\in \End_\sss(I^{\otimes g}).
$$
Since $I^{\otimes g}$ is invertible (because $I$ is invertible), Section~\ref{sect-invertible-objects} yields a scalar
$$
\norm{\phi^{\otimes g} \circ K_A(D) \circ \Omega^{\otimes g}}_{I^{\otimes g}} \in \End_\sss(\un).
$$
Recall the scalars $\sigma_I$ and $\nu_{(\phi,\Omega)}$ from Sections~\ref{sect-invertible-objects} and~\ref{sect-def-good-pair}, respectively, and let
$$
\gamma_{(\phi,\Omega)}=
\begin{cases}
1 & \text{if $\sigma_I=\nu_{(\phi,\Omega)}=1$,} \\ 2 & \text{otherwise.}
\end{cases}
$$
Finally, set
\begin{equation*}
\inv_{(\phi,\Omega)}(M) =\norm{\phi^{\otimes g} \circ K_A(D) \circ \Omega^{\otimes g}}_{I^{\otimes g}}^{\gamma_{(\phi,\Omega)}}
\in \End_\sss(\un).
\end{equation*}

\begin{theorem}\label{main-thm}
$\inv_{(\phi,\Omega)}(M)$ is a topological invariant of $M$.
\end{theorem}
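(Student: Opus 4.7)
The plan is to show that $\inv_{(\phi,\Omega)}(M)$ does not depend on the ordered oriented based Heegaard diagram chosen to present~$M$. By the Reidemeister--Singer theorem recalled in Section~\ref{secct-Heegaard-diag-3man}, any two Heegaard diagrams of homeomorphic oriented 3-manifolds are connected by a finite sequence of surface homeomorphisms, 2-point moves, stabilizations, and handle slides; on top of this I must also verify that the scalar is unchanged under the auxiliary choices of orientations, base points, and orderings used to promote a plain Heegaard diagram to an ordered oriented based one. I would organize the argument into two phases accordingly.

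\emph{Phase 1: independence of the auxiliary choices.} Reordering $\up$ or $\low$ precomposes $\Omega^{\otimes g}$ (or postcomposes $\phi^{\otimes g}$) with a permutation of the factors of $I^{\otimes g}$, and by \eqref{eq-sigma-def} each transposition multiplies $\norm{\phi^{\otimes g} K_A(D) \Omega^{\otimes g}}_{I^{\otimes g}}$ by $\sigma_I\in\{\pm 1\}$; any residual sign is killed by raising to the exponent $\gamma_{(\phi,\Omega)}$. Reversing the orientation of a circle simultaneously reverses the cyclic order of the intersections on it and flips their signs $\kappa_c$, and the anti-(co)multiplicativity of $S$ converts this reversal into the insertion of an antipode on that circle's contribution, which Axiom~(GP3) absorbs into a power of $\nu_{(\phi,\Omega)}$, again killed by Lemma~\ref{lem-gp}(a) and the exponent $\gamma_{(\phi,\Omega)}$. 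Moving a base point past one intersection cyclically shifts the arguments of one $\mu_{|u|}$ or one $\Delta_{|l|}$: for an upper circle, (GP4) moves a single tensor factor across the product at the cost of an auxiliary $f$, and (GP5) combined with Lemma~\ref{lem-gp}(d) then eliminates that $f$, giving cyclic invariance of $\phi\mu_{|u|}$; the lower-circle case is dual, using (GP5), (GP4) and Lemma~\ref{lem-gp}(c).

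\emph{Phase 2: invariance under the Reidemeister--Singer moves.} Invariance under surface homeomorphisms reduces to Phase~1, because such a homeomorphism preserves intersection-point signs and only transports orderings and base points. A 2-point move inserts or removes a bigon carrying two oppositely-signed intersections between one upper and one lower circle, and the local contribution to $K_A(D)$ is the composition $\mu(\id_A\otimes S)\Delta=\eta\varepsilon$ (or $\mu(S\otimes\id_A)\Delta$) by the antipode axiom; combined with (GP1) and (GP2) this insertion leaves the scalar unchanged. A stabilization adds a new upper and a new lower circle meeting transversely at a single new intersection point, giving $K_A(D)$ an extra factor of $\id_A$ on the new genus, which is then collapsed by the identity $\phi\Omega=\id_I$ of (GP1). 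Finally, for a handle slide, say of the lower circle $l_1$ over $l_2$, the generalized coassociativity $\Delta_{n_1+n_2}=(\Delta_{n_1}\otimes\Delta_{n_2})\Delta$ from Section~\ref{sect-Hopf-alg} allows one to repackage $\Delta_{|l_1|}\otimes\Delta_{|l_2|}$ as the contribution of the slid circle together with an extra coproduct, which in turn cancels with an extra product on the upper-circle side via (GP2) or its mirror from Lemma~\ref{lem-gp}(b).

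The main obstacle will be the handle-slide step: the slide disturbs both total orders on the intersection set, and it is delicate to identify precisely which instance of (GP2) (or its $H'$-form of Lemma~\ref{lem-gp}(b)) cancels the additional $\mu$ and $\Delta$ introduced. To reduce to a purely local computation, I expect to exploit Phase~1 repeatedly, repositioning base points and cyclically permuting intersections near the slide region so that the new crossings appear as adjacent tensor factors; at that point the bialgebra axiom combined with (GP2) closes the argument. The other moves are essentially immediate from (GP1) once the antipode axiom is invoked, so the bulk of the technical work lies in tracking the permutation $P_\sigma$ through the slide.
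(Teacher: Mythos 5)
Your proposal follows essentially the same route as the paper: independence of orientations, base points, and orderings is handled exactly as you describe (via (GP3) with $\nu_{(\phi,\Omega)}^2=1$, via (GP4)/(GP5) with Lemma~\ref{lem-gp}, and via $\sigma_I^2=1$, the exponent $\gamma_{(\phi,\Omega)}$ absorbing the residual signs), and the Reidemeister--Singer moves are disposed of by the antipode axiom for the 2-point move, $\phi\Omega=\id_I$ for stabilization, and the bialgebra axiom combined with (GP2) for the handle slide, with the base-point and orientation freedom used to normalize the slide to a local picture, just as you anticipate. The one slip is a cross-labeling in the base-point step: on an upper circle the auxiliary morphism $f$ from (GP4) is eliminated by sliding it down onto $\Delta_\low\Omega^{\otimes g}$ and invoking Lemma~\ref{lem-gp}(c) (not (GP5) with Lemma~\ref{lem-gp}(d), which is the tool for the dual lower-circle case).
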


We prove Theorem~\ref*{main-thm} in Section~\ref{sect-proof-main-thm}.

It follows from Section~\ref{sect-ex-KAD} that
$$
\inv_{(\phi,\Omega)}(S^1 \times S^2)= \norm{\alpha_0}_I^{\gamma_{(\phi,\Omega)}} \mand
\inv_{(\phi,\Omega)}(L(p,1))= \norm{\alpha_p}_I^{\gamma_{(\phi,\Omega)}}
$$
where $p \geq 1$ and
$$
\alpha_0=\; \rsdraw{.45}{.9}{Inv-S1S2}=\phi \eta \, \varepsilon\Omega =\phi \mu_0  \Delta_0 \Omega, \qquad
\alpha_p=\; \rsdraw{.45}{.9}{Inv-Ln1}=\phi \mu_p  \Delta_p \Omega.
$$
In particular, since $S^3=L(1,1)$ and  $\norm{\alpha_1}_I=\norm{\phi \mu_1  \Delta_1 \Omega}_I=\norm{\phi\Omega}_I=\norm{\id_I}_I=1$, we obtain that
$$
\inv_{(\phi,\Omega)}(S^3)=1.
$$
Also, denoting by $\mathbb{PO}$ the Poincar\'e homology sphere, we have:
$$
\inv_{(\phi,\Omega)}(\mathbb{PO})=\norm{\beta}_{I \otimes I}^{\gamma_{(\phi,\Omega)}}
\quad \text{where} \quad
\beta=\; \rsdraw{.45}{.9}{Inv-Poincare} \;.
$$

\subsection{Remarks}
1) Let us consider the case where $\sss$ is the category $\mathrm{vect}_\kk$ of finite-dimensional vector spaces over a field $\kk$, that is, where $A$ is a finite-dimensional involutory Hopf algebra over $\kk$. Assume that $\dim(A) 1_\kk$ is invertible in $\kk$. Let $M$ be an oriented closed  connected 3-manifold. Denote by $\mathrm{Ku}_A(M)$ the Kuperberg invariant of $M$ derived from $A$, see \cite{Ku1}. Then $\mathrm{Ku}_A(M)$ is recovered from Theorem~\ref{main-thm} as follows. The involutivity of the antipode of $A$ and the invertibility of  $\dim(A) 1_\kk$ imply that $A$ has a two-sided integral $\Lambda \in \Hom_\kk( \kk, A)=A$ and a two-sided cointegral $\lambda \in \Hom_\kk(A, \kk)=A^*$ such that $\lambda(\Lambda)=1_\kk$. Then $(\Lambda,\lambda)$ is a good pair for $A$ such that $\gamma_{(\Lambda,\lambda)}=1$ (because $\sigma_\kk=\nu_{(\Lambda,\lambda)}=1$) and
$$
\mathrm{Ku}_A(M)=\inv_{(\Lambda,\lambda)}(M).
$$

2) Let $\mathcal{T}$ be a symmetric monoidal category and $F\co \sss \to \mathcal{T}$ be a symmetric monoidal equivalence.
In particular,  $F(A)$ is an  involutory Hopf algebra in $\mathcal{T}$. It follows from the definitions that if $(\phi,\Omega)$ is a good pair of~$A$, then $(F(\phi),F(\Omega))$ is a good pair of~$F(A)$ and, for any oriented closed 3-manifold $M$,
$$
\inv_{(F(\phi),F(\Omega))}(M)=\inv_{(\phi,\Omega)}(M).
$$

3) Let  $(\phi,\Omega)$ be a good pair of~$A$. Consider the reverse Hopf algebra $A^\reve$ in~$\sss^\reve$ (see Section~\ref{sect-dual-reverse}). By Remark~\ref{rem-revese-gp}, $(\Omega,\phi)$ is a good pair of $A^\reve$.
Then, for any oriented closed 3-manifold $M$,
$$
\inv_{(\Omega,\phi)}(M)=\inv_{(\phi,\Omega)}(M).
$$
This  follows from the fact that if $D=(\Sigma,\up,\low)$ is an  ordered oriented  based Heegaard diagram, then
$$
K_{A^\reve}(D)=K_A(D^\reve)
$$
where $D^\reve=(-\Sigma,\low^\opp,\up^\opp)$  is the ordered oriented  based  Heegaard diagram obtained from $D$
by reversing the orientation of $\Sigma$, reversing the orientations of the upper/lower circles while keeping their base points intact, exchanging the lower and upper circles, and taking the opposite order for $\up$ and $\low$.

4) Assume that $\sss$ is left rigid and let $(\phi,\Omega)$ be
a good pair of~$A$. Consider the dual Hopf algebra $A^*$ in~$\sss$ (see Section~\ref{sect-dual-reverse}).
The latter two remarks, together with the fact that the dual functor $?^* \co \sss^\reve \to \sss$ is a symmetric monoidal equivalence, imply that $(\Omega^*,\phi^*)$ is a good pair of $A^*$ and, for any oriented closed 3-manifold  $M$,
$$
\inv_{(\Omega^*,\phi^*)}(M)=\inv_{(\phi,\Omega)}(M).
$$

5) As remarked by Andrew Casson (see Kuperberg~\cite[Proposition 2.5]{Ku2}), the sets of upper and lower circles of an oriented Heegaard diagram have canonical sign-orderings\footnote{A \emph{sign-ordering} of a finite set is an orbit of the alternating group acting on the set of full orderings of the set.}. Here canonical means that the sign-ordering is preserved by handle slide and reversed if the orientation of a Heegaard circle is reversed. If instead of picking an arbitrary ordering for the sets of upper and lower circles we take this sign ordering, then the definition of $\inv_{(\phi,\Omega)}(M)$ may be slightly improved  by redefining $\gamma_{(\phi,\Omega)}$ as follows:
$$
\gamma_{(\phi,\Omega)}=
\begin{cases}
1 & \text{if $\nu_{(\phi,\Omega)}=1$,} \\ 2 & \text{otherwise.}
\end{cases}
$$
Note that this definition of $\gamma_{(\phi,\Omega)}$ differs from that given in Section~\ref{sect-main-result} only in the case $\sigma_I\neq 1=\nu_{(\phi,\Omega)}$.
The definition of $\gamma_{(\phi,\Omega)}$ given in Section~\ref{sect-main-result} avoids the computation of the canonical sign-orderings.

\subsection{Example}\label{sect-ex-calcul-inv}
Let $\kk$ be a field, $n$ be a positive integer not divisible by the characteristic of $\kk$, and $\param\in\kk$.
Consider the Hopf algebra $\Hn$ in the category $\mathrm{SVect}_\kk$ of super \kt vector spaces defined in Section~\ref{sect-ex-super-HA1}. By Section~\ref{sect-ex-super-vect}, the $(\Z/2\Z)$-graded vector space $I$ defined by  0 in degree~0 and $\kk$ in degree 1 is invertible and $\sigma_I=-1$.  Define morphisms
$$
\phi \co \Hn \to I \mand \Omega \co I \to \Hn
$$
by setting, for $0 \leq k \leq n-1$,
$$
\phi(t^k)=0, \quad \phi(\ns t^k)=\frac{1}{n} 1_\kk, \mand  \Omega(1_\kk)=\ns (1+t+ \cdots + t^{n-1}).
$$
Then $(\phi,\Omega)$ is a good pair for $\Hn$ (see  Example~\ref{sect-ex-gp-from-integs} below) for which $\nu_{(\phi,\Omega)}=-1$ and so $\gamma_{(\phi,\Omega)}=2$. Using this good pair, we obtain:
\begin{align*}
& \inv_{(\phi,\Omega)}(S^3)=1,  && \inv_{(\phi,\Omega)}(S^1 \times S^2)=0, \\
& \inv_{(\phi,\Omega)}(L(p,1))=p^2, && \inv_{(\phi,\Omega)}(\mathbb{PO})=  1
\end{align*}
where $p \geq 1$ and $\mathbb{PO}$ is the Poincar\'e homology sphere.

\subsection{Proof of Theorem~\ref*{main-thm}}\label{sect-proof-main-thm}
Let $D=(\Sigma,\up,\low)$ be a genus $g$ Heegaard diagram of an oriented closed connected 3-manifold $M$. As in Section~\ref{sect-main-result}, endow each upper and lower circle with an orientation and a base point, and pick (full) orders for the sets $\up$ and $\low$, so that $D$ becomes an ordered oriented based  Heegaard diagram. Recall that
$$
\inv_{(\phi,\Omega)}(M) =\norm{\Upsilon_D}_{I^{\otimes g}}^{\gamma_{(\phi,\Omega)}}
\quad \text{where} \quad
\Upsilon_D=\phi^{\otimes g} \circ K_A(D) \circ \Omega^{\otimes g}.
$$

First, suppose that we change the orientation of an upper circle $u_0 \in \up$ into its opposite. This transforms $D$ into an ordered oriented based Heegaard diagram~$D'$. Using the notation of Sections~\ref{sect-opoHD} and~\ref{sect-asso-tensor-HD}, we have:
$$
K_A(D)= \left (\bigotimes_{u \in \up} \mu_{|u|} S_u \right )  P_\sigma  \Delta_\low
\quad \text{where} \quad
S_u=\bigotimes_{c \in \I_u} S^{\kappa_c}.
$$
Note that changing the orientation of $u_0$ transforms the order of $\I_{u_0}$ into its opposite and transforms the positive/negative intersection points in $u_0$ into negative/positive ones.  Also, the involutivity of $S$ implies that for $\kappa \in \{0,1\}$, $$S^{1-\kappa}=SS^{-\kappa}=SS^{\kappa}.$$
Thus $K_A(D')$ is obtained from $K_A(D)$ by replacing $\mu_{|u_0|}S_{u_0}$ with
$$
\mu_{|u_0|} P_\omega S^{\otimes |u_0|} S_{u_0}
$$
where the permutation $\omega\in\mathfrak{S}_{|u_0|}$ is defined by $\omega(i)=|u_0|-i+1$. Now the anti-multiplicativity of the antipode $S$ implies that
$$
\mu_{|u_0|} P_\omega S^{\otimes |u_0|} S_{u_0}=S \mu_{|u_0|} S_{u_0}.
$$
Thus, by Axiom (GP3) of a good pair, we obtain that
$$
\Upsilon_{D'}=\nu_{(\phi,\Omega)}\, \Upsilon_D.
$$
Now $\nu_{(\phi,\Omega)}^{\gamma_{(\phi,\Omega)}}=1$ by the definition of $\gamma_{(\phi,\Omega)}$ and Lemma~\ref{lem-gp}(a). Consequently,
$$
\norm{\Upsilon_{D'}}_{I^{\otimes g}}^{\gamma_{(\phi,\Omega)}}
=\nu_{(\phi,\Omega)}^{\gamma_{(\phi,\Omega)}}\, \norm{\Upsilon_D}_{I^{\otimes g}}^{\gamma_{(\phi,\Omega)}}
=\norm{\Upsilon_D}_{I^{\otimes g}}^{\gamma_{(\phi,\Omega)}}.
$$
This proves the invariance under the choice of orientation for the upper circles. The  invariance under the choice of orientation for the lower circles is proved similarly.

Second, let us prove the invariance under the change of the base point of an upper circle $u_0\in\up$. Clearly, $K_A(D)$ (and so $\Upsilon_D$) remains unchanged if we choose the base point within the same connected component of $u_0\setminus \I_{u_0}$. Thus we only need to consider the case where $|u_0| \geq 2$ and the base point of $u_0$ is changed by picking it in the previous connected component of $u_0\setminus \I_{u_0}$ (the set of connected components of $u_0\setminus \I_{u_0}$ inherits a cyclic order from the orientation of $u_0$). This transforms $D$ into an ordered oriented based Heegaard diagram $D'$.
The change of the base point modifies cyclically the order $\I_{u_0}$ (the last element becoming the first one) and keeps the positivity/negativity of the intersection points intact. Thus $K_A(D')$ is obtained from
$$
K_A(D)= \left (\bigotimes_{u \in \up} \mu_{|u|} \right )  P_\sigma S_\low \Delta_\low
$$
by replacing $\mu_{|u_0|}$ with
$$
\mu_{|u_0|} \tau_{A,A^{\otimes(|u_0|-1)}}
.$$
Now, using the morphism $f$ provided by Axiom (GP4) of a good pair, we have:
\begin{align*}
\phi\mu_{|u_0|} \tau_{A,A^{\otimes(|u_0|-1)}}
 & \overset{(i)}{=} \phi \mu \tau_{A,A}(\id_A  \otimes \mu_{(|u_0|-1)}) \\
 & \overset{(ii)}{=} \phi \mu (f  \otimes \mu_{(|u_0|-1)}) \\
 & \overset{(iii)}{=}\phi\mu_{|u_0|}(f \otimes \id_{A^{\otimes(|u_0|-1)}} ).
\end{align*}
Here $(i)$ follows from  the associativity of $\mu$ and the naturality of $\tau$, $(ii)$ from  Axiom~(GP4), and $(iii)$ from the associativity of $\mu$.  Thus $\Upsilon_{D'}$ is obtained from
$$
\Upsilon_D= \left (\bigotimes_{u \in \up} \phi \mu_{|u|} \right )  P_\sigma S_\low \Delta_\low \Omega^{\otimes g}
$$
by replacing $\phi\mu_{|u_0|}$ with $$\phi\mu_{|u_0|}(f \otimes \id_{A^{\otimes(|u_0|-1)}} ).$$ Consequently,
$$
\Upsilon_{D'} =\phi^{\otimes g} \mu_{\hspace{1pt}\up}  (\id_{A^{\otimes m}} \otimes f \otimes \id_{A^{\otimes n}}) P_\sigma  S_\low \Delta_\low \Omega^{\otimes g}
$$
for some non negative integers $m,n$. Using the naturality of $\tau$ and Lemma~\ref{lem-gp}(c), we obtain (by sliding down $f$) that $\Upsilon_{D'} = \Upsilon_D$. This proves the invariance under the choice of  the base point for the upper circles. The  invariance under the choice of the base point for the lower circles is proved similarly using Axiom (GP5) of a good pair and Lemma~\ref{lem-gp}(d).

Third, let us prove the invariance under the choice of an order for $\up$. Since the transpositions between consecutive numbers generate the symmetric group, we only need to consider the case where we modify the order of $\up$ by exchanging two consecutive upper circles $u_0<u_1$. This transforms $D$ into an ordered oriented based Heegaard diagram $D'$. Then $K_A(D')$ is obtained from
$$
K_A(D)= \left (\bigotimes_{u \in \up} \mu_{|u|} \right )  P_\sigma S_\low \Delta_\low
$$
by replacing $\mu_{|u_0|} \otimes \mu_{|u_1|}$  with
$$
(\mu_{|u_1|} \otimes \mu_{|u_0|})  \tau_{A^{\otimes |u_0|},A^{\otimes |u_1|}}.
$$
Now the naturality of $\tau$ and \eqref{eq-sigma-def} imply that
$$
(\phi\mu_{|u_1|} \otimes \phi\mu_{|u_0|})  \tau_{A^{\otimes |u_0|},A^{\otimes |u_1|}}
=\tau_{I,I} (\phi\mu_{|u_0|} \otimes \phi\mu_{|u_1|})= \sigma_I \, (\phi\mu_{|u_0|} \otimes \phi\mu_{|u_1|})
$$
Thus $\Upsilon_{D'}$ is obtained from
$$
\Upsilon_D= \left (\bigotimes_{u \in \up} \phi \mu_{|u|} \right )  P_\sigma S_\low \Delta_\low \Omega^{\otimes g}
$$
by replacing $\phi\mu_{|u_0|} \otimes  \phi\mu_{|u_1|}$ with
$$
 \sigma_I \, (\phi\mu_{|u_0|} \otimes \phi\mu_{|u_1|}).
$$
Consequently,  $\Upsilon_{D'} = \sigma_I \, \Upsilon_D$.
Since $\sigma_I^{\gamma_{(\phi,\Omega)}}=1$ by the definition of $\gamma_{(\phi,\Omega)}$ and \eqref{eq-sigma-invert-ob}, we conclude that
$$
\norm{\Upsilon_{D'}}_{I^{\otimes g}}^{\gamma_{(\phi,\Omega)}}
=\sigma_I^{\gamma_{(\phi,\Omega)}}\, \norm{\Upsilon_D}_{I^{\otimes g}}^{\gamma_{(\phi,\Omega)}}
=\norm{\Upsilon_D}_{I^{\otimes g}}^{\gamma_{(\phi,\Omega)}}.
$$
This proves the invariance under the choice of an order for $\up$. The  invariance under the choice of an order for $\low$ is proved similarly.

It remains to prove that $\norm{\Upsilon_D}_{I^{\otimes g}}^{\gamma_{(\phi,\Omega)}}$ does not depend on the choice of the Heegaard diagram $D=(\Sigma,\up,\low)$ representing $M$. The Reidemeister-Singer theorem (see \cite[Theorem 8]{Si} or \cite[Theorem 4.1]{Ku1}) asserts that two Heegaard diagrams for~$M$ give rise to homeomorphic oriented 3-manifolds if and only if one can be obtained from the other by a finite sequence of four types of moves (and their inverse), namely:  (I) homeomorphism of the surface, (II) isotopy of the diagram, (III) stabilization, and (IV) sliding a circle past another circle (both upper or both lower). Let us prove the invariance under these moves:\\

{\it Type I.} By using an orientation-preserving homeomorphism from $\Sigma$ to a (closed, connected, and oriented) surface $\Sigma'$, the upper (respectively, lower) circles on $\Sigma$ are carried to the upper (respectively, lower) circles on $\Sigma'$ to obtain a Heegaard diagram $D'=(\Sigma',\up',\low')$. We endow each upper and lower circle of $D'$ with an orientation and a base point and pick  orders for the sets $\up'$ and $\low'$ in such a way that they are preserved by the homeomorphism. Clearly, $K_A(D')=K_A(D)$ and so $\norm{\Upsilon_D'}_{I^{\otimes g}}^{\gamma_{(\phi,\Omega)}}=\norm{\Upsilon_D}_{I^{\otimes g}}^{\gamma_{(\phi,\Omega)}}$.\\

{\it Type II.} We isotop the lower circles of $D$ relative to the upper circles. If
  this isotopy is in general position, it reduces to a sequence of the following two-point moves with $u \in \up$ and $l \in \low$:
$$
 \psfrag{u}[Br][Br]{\scalebox{1}{$u$}}
 \psfrag{v}[Br][Br]{\scalebox{1}{$l$}}
 \rsdraw{.49}{.9}{isotopy2-n} \;\;.
$$
Note that one of the added intersection points is positive and the other one is negative. We can consider that these two circles are oriented so that the invariance is a consequence of
the following equalities:
\begin{align*}
&\;\;\psfrag{A}[Br][Br]{\scalebox{.9}{$F$}}
 \psfrag{B}[Br][Br]{\scalebox{.9}{$G$}}\rsdraw{.45}{.9}{iso-f1} \;\; \overset{(i)}{=} \;\; \rsdraw{.45}{.9}{iso-f2} \\[.6em]
\overset{(ii)}{=}&\;\;\psfrag{A}[Br][Br]{\scalebox{.9}{$F$}}
 \psfrag{B}[Br][Br]{\scalebox{.9}{$G$}} \rsdraw{.45}{.9}{iso-f3}  \; \overset{(iii)}{=} \; \rsdraw{.45}{.9}{iso-f4}
\end{align*}
where $F$ and $G$ are morphisms between monoidal powers of $A$ (built of the symmetry of $\sss$ and the structural morphisms of the Hopf algebra $A$). Here $(i)$ follows from the (co)associativity of the (co)product, $(ii)$ from the axiom of the antipode, and $(iii)$ from the (co)unitality of the (co)product.\\

{\it Type III.} We remove a disk from $\Sigma$ which is disjoint from all upper and lower circles and replace
  it by a punctured torus with one upper circle and one lower circle. One of them corresponds to the standard
  meridian and the other to the standard longitude of the added torus:
$$
 \psfrag{D}[Bc][Bc]{$D$}
 \rsdraw{.49}{.9}{stabilisation-n} \;\;.
$$
The added upper and lower circles are  endowed with base points arbitrarily  and are oriented in such a way that the added intersection point is positive. Then the invariance follows from the equality $\norm{\phi\Omega}_I=\norm{\id_I}=1$ which is a direct consequence of Axiom (GP1) of a good pair.\\

{\it Type IV.} Let us slide an upper circle $u_1$ past an upper circle $u_2$. We can assume that $u_1$ and $u_2$ are consecutive with $u_1<u_2$. Let $b$ be a band on $\Sigma$ which connects $u_1$ to $u_2$ outside the intersection points (meaning that $b\co I \times I \hookrightarrow \Sigma$ is an embedding such that $b(I \times I)\cap u_i=b(\{i\}  \times I)\subset u_i\setminus \I_{u_i}$ for $i \in \{1,2\}$, where $I=[1,2]$) but does not cross any other circle. The circle $u_1$ is replaced by
\begin{equation*}
  u'_1=u_1 \#_b u_2=u_1 \cup u_2 \cup b(I \times \partial I) \setminus
  b(\partial I \times I).
\end{equation*}
The circle $u_2$ is replaced by a copy $u'_2$ of itself which is slightly isotoped such that it has no point in common with $u'_1$. The new circle $u'_1$ and $u'_2$ inherit the orientations and base points from $u_1$ and~$u_2$, respectively. We choose the base points and orientations of $u_1$ and~$u_2$  so that the move becomes
$$
 \psfrag{C}[Bl][Bl]{\scalebox{1}{$u_1$}}
 \psfrag{D}[Bl][Bl]{\scalebox{1}{$u_2$}}
 \psfrag{U}[Bl][Bl]{\scalebox{1}{$u'_1$}}
 \psfrag{V}[Br][Br]{\scalebox{1}{$u'_2$}}
 \psfrag{b}[Bc][Bc]{$b$}
 \rsdraw{.49}{.9}{circleslide2-nb}
$$
where the small arcs represents the parts of the lower circles intersecting the involved upper circles and the orientation of the band is induced by that of $\Sigma$.
This transforms~$D$ into an ordered oriented  based  Heegaard diagram $D'$.
Let
$$
\I_{u_2}=\{c_1<\dots<c_n\} \mand \I_{u'_2}=\{c'_1<\dots < c'_n\}
$$
where $n=|u_2|$.
For  $1 \leq i \leq n$, set $\kappa_i=\kappa_{c_i}$ (in the notation of Section~\ref{sect-asso-tensor-HD}).
Note that $\kappa_{c'_i}=\kappa_i$ since the intersection points $c_i$ and $c'_i$ have the same positivity.
For any $\kappa \in \{0,1\}$, define
$$
\delta_\kappa=\tau_{A,A}^\kappa \Delta=\begin{cases}
\Delta & \text{if $\kappa=0$,} \\ \tau_{A,A} \Delta & \text{if $\kappa=1$.}
\end{cases}
$$
It follows from the definitions and the coassociativity of the coproduct that  $K_A(D')$ is obtained from
$$
K_A(D)= \left (\bigotimes_{u \in \up} \mu_{|u|} S_u \right )  P_\sigma  \Delta_\low
$$
by replacing $\mu_{|u_1|}S_{u_1} \otimes \mu_{|u_2|}S_{u_2}$ with
$$
\psfrag{A}[Bc][Bc]{\scalebox{.9}{$S^{\kappa_1}$}}
\psfrag{B}[Bc][Bc]{\scalebox{.9}{$S^{\kappa_n}$}}
\psfrag{C}[Bc][Bc]{\scalebox{.9}{$S_{u_1}$}}
\psfrag{m}[Bc][Bc]{\scalebox{.9}{$\delta_{\kappa_1}$}}
\psfrag{w}[Bc][Bc]{\scalebox{.9}{$\delta_{\kappa_n}$}}
\rsdraw{.45}{.9}{circle-slide-dem-1}\;\,.
$$
Now
$$
\psfrag{A}[Bc][Bc]{\scalebox{.9}{$S^{\kappa_1}$}}
\psfrag{B}[Bc][Bc]{\scalebox{.9}{$S^{\kappa_n}$}}
\psfrag{D}[Bc][Bc]{\scalebox{.9}{$S_{u_2}$}}
\psfrag{m}[Bc][Bc]{\scalebox{.9}{$\delta_{\kappa_1}$}}
\psfrag{w}[Bc][Bc]{\scalebox{.9}{$\delta_{\kappa_n}$}}
\rsdraw{.45}{.9}{circle-slide-dem-2} \;\,\overset{(i)}{=}\;\,
\rsdraw{.45}{.9}{circle-slide-dem-3} \;\,\overset{(ii)}{=}\;\,
\rsdraw{.45}{.9}{circle-slide-dem-4}
$$
Here $(i)$ follows from the equality $(S^\kappa \otimes S^\kappa)\delta_\kappa=\Delta S^\kappa$ (which is a consequence of the anti-comultiplicativity of $S$) and $(ii)$ from the multiplicativity of $\Delta$. Thus
$K_A(D')$ is obtained from $K_A(D)$ by replacing $\mu_{|u_1|}S_{u_1} \otimes \mu_{|u_2|}S_{u_2}$ with
$$
\psfrag{C}[Bc][Bc]{\scalebox{.9}{$S_{u_1}$}}
\psfrag{D}[Bc][Bc]{\scalebox{.9}{$S_{u_2}$}}
\rsdraw{.4}{.9}{circle-slide-dem-5}\;\,=\,H(\mu_{|u_1|}S_{u_1} \otimes \mu_{|u_2|}S_{u_2})
$$
where $H=(\mu \otimes \id_A)(\id_A \otimes \Delta)$. Consequently, the equality $(\phi \otimes \phi)H=\phi \otimes \phi$ from Axiom (GP2) of a good pair implies that $\phi^{\otimes g} K_A(D')=\phi^{\otimes g}  K_A(D)$, and so that $\Upsilon_{D'}=\Upsilon_D$.
This proves the invariance under sliding an upper circle past another upper circle. The  invariance under sliding a lower circle past another lower circle is proved similarly using the equality $H(\Omega \otimes \Omega)=\Omega \otimes \Omega$ from Axiom (GP2).
This completes the proof of Theorem~\ref{main-thm}.

\section{Good pairs from (co)integrals}\label{sec4}
In this section, $\sss=(\sss,\otimes,\un)$ is a symmetric monoidal category with symmetry
$\tau=\{\tau_{X,Y} \co X \otimes Y \to Y \otimes\}_{X,Y \in \Ob(\sss)}$. We derive good pairs from integrals and cointegrals that we first define.

\subsection{Integrals and cointegrals} \label{sect-def-integrals}
Consider  a bialgebra $A=(A,\mu,\eta,\Delta,\varepsilon)$ in $\sss$.
A \emph{left integral} of $A$ is a morphism $\Lambda \co I \to A$ in $\sss$ such that
$$
\mu(\id_A \otimes \Lambda)=\varepsilon \otimes \Lambda.
$$
Similarly, a \emph{right integral} of $A$ is a morphism $\Lambda \co \un \to A$ such that
$$
\mu(\Lambda \otimes \id_A)=\Lambda  \otimes \varepsilon.
$$
A \emph{left cointegral} of $A$ is a morphism $\lambda \co A \to I$ such that
$$
(\id_A \otimes \lambda)\Delta=\eta \otimes \lambda.
$$
Similarly, a \emph{right cointegral} of $A$ is a morphism $\lambda \co A \to I$ such that
$$
(\lambda \otimes\id_A)\Delta=\lambda \otimes  \eta.
$$
A left/right integral $\Lambda \co I \to A$ is \emph{universal} if any left/right integral $X \to A$ factorizes as $\Lambda f$ for a unique morphism $f \co X \to I$.  Similarly, a left/right cointegral $\lambda \co A \to I$ is \emph{universal} if any left/right cointegral $A \to X$ factorizes as $g\lambda$ for a unique morphism $g \co I \to X$.

We note two properties of (co)integrals of the bialgebra~$A$ (easily deduced from the definitions). First, multiplying a left/right  integral of~$A$  with an element of $\End_\sss(\un)$ gives left/right integrals of~$A$ and similarly for cointegrals. Second,  suppose   that $A$ is a Hopf algebra  with antipode~$S$.  Then a morphism $\Lambda\co I \to A$ is a left/right integral of $A$ if and only if $S\Lambda$ is a right/left integral of $A$, and in this case   $\Lambda$ is universal if and only if $S\Lambda$ is universal. Similarly,  a morphism $\lambda\co A \to I$  is a left/right cointegral of $A$ if and only if $\lambda S$ is a right/left cointegral of $A$,  and in this case   $\lambda$ is universal if and only if $\lambda S$ is universal.

\subsection{Distinguished morphisms}\label{sect-dist-elts}
Let $A=(A,\mu,\eta,\Delta,\varepsilon,S)$  be a Hopf algebra in~$\sss$. We assume:
\begin{enumerate}
\labeli
\item[(A1):] There exist an invertible object $I$ of $\sss$, a universal left integral $\Lambda \co I \to A$ of~$A$, and a universal right cointegral  $\lambda \co A \to I$ of~$A$ such that $\lambda\Lambda=\id_I$.
\end{enumerate}
Graphically:
\begin{equation}\label{eq-A1}
\psfrag{b}[Bc][Bc]{\scalebox{.8}{$\lambda$}}
\psfrag{d}[Bc][Bc]{\scalebox{.8}{$\Lambda$}}
\rsdraw{.4}{.9}{left-int1}\;=\;\rsdraw{.4}{.9}{left-int2}\;\,,
\qquad
\rsdraw{.4}{.9}{right-coint1}\,=\;\rsdraw{.4}{.9}{right-coint2}\;\,,
\qquad
\rsdraw{.4}{.9}{right-left-1}\;=\;\,\rsdraw{.4}{.9}{right-left-2}\;\;.
\end{equation}
Here (and below) the color of a black strand is $A$ and that of a red strand is $I$. Note that the assumption (A1) is satisfied for example when $\sss$ is left rigid and all idempotents in~$\sss$ split (see \cite[Theorem 3.3]{BKLV}).

The coassociativity of the coproduct $\Delta$ implies that the morphism
$$
\lambda'=(\id_A \otimes \lambda) \Delta \co A \to A \otimes  I
$$
is a right cointegral of $A$. It follows from the universality of $\lambda$  and the bijection $\Hom_\sss(\un, A) \simeq \Hom_\sss(I,A \otimes I)$ that there is a  unique morphism $g \co \un \to A$ such that  $\lambda'=(g \otimes \id_I)\lambda=g \otimes \lambda$, that is,
\begin{equation}\label{eq-def-g}
\psfrag{u}[Bc][Bc]{\scalebox{.8}{$g$}}
\psfrag{b}[Bc][Bc]{\scalebox{.8}{$\lambda$}}
\rsdraw{.4}{.9}{def-g1}\;=\;\rsdraw{.4}{.9}{def-g2}\;.
\end{equation}
It follows from the uniqueness of the factorization through a universal cointegral that the morphism $g$ is grouplike, that is,
$$
\Delta g = g \otimes g \quad \text{and} \quad  \varepsilon g=1.
$$
It is called the \emph{distinguished grouplike element} of $A$.

Likewise, the universality of the integral $\Lambda$ imply the existence of a unique morphism $\alpha \co A \to \un$ such that
\begin{equation}\label{eq-def-alpha}
\psfrag{d}[Bc][Bc]{\scalebox{.8}{$\Lambda$}}
\psfrag{a}[Bc][Bc]{\scalebox{.8}{$\alpha$}}
\rsdraw{.4}{.9}{def-a1}\;=\;\rsdraw{.4}{.9}{def-a2}\;.
\end{equation}
Then  $\alpha$ is an algebra morphism, meaning that
$$
\alpha \mu=\alpha \otimes \alpha   \quad \text{and} \quad  \alpha \eta =1.
$$
It is called the \emph{distinguished algebra morphism} of $A$.

The set $\Hom_\sss(\un,A)$ is a monoid with product $(x,y) \mapsto \mu(x \otimes y)$ and with unit~$\eta$. The axioms of an antipode imply that $g$ is invertible in $\Hom_\sss(\un,A)$ with inverse computed by $g^{-1}=Sg$. Similarly, the set $\Hom_\sss(A,\un)$ is a monoid, with product $(f,g) \mapsto (f \otimes g)\Delta$ and with unit $\varepsilon$, in which $\alpha$ is invertible with inverse $\alpha^{-1}=\alpha S$.

The distinguished grouplike element $g$ is said to be \emph{central} if $$\mu(g \otimes \id_A)=\mu(\id_A \otimes g).$$
Note that this implies that $g$ is a central element of the monoid $\Hom_\sss(\un,A)$. Likewise, the distinguished algebra morphism $\alpha$ is said to be \emph{central} if $$(\alpha \otimes \id_A)\Delta=(\id_A \otimes \alpha)\Delta.$$
Note that this implies that $\alpha$ is a central element of the monoid $\Hom_\sss(A,\un)$.

\begin{lemma}\label{lem-ppte-integ-invol}
Assume that $A$ is involutory. Then:
\begin{enumerate}
\labela
\item $\alpha g=1$. More generally, $\alpha^k g^l=1$ for all $k,l \in \Z$.
\item $\lambda\mu(\id_A \otimes g)= \lambda\mu(g \otimes \id_A) = \sigma_I \lambda S$. Graphically:
$$
\psfrag{u}[Bc][Bc]{\scalebox{.8}{$g$}}
\psfrag{b}[Bc][Bc]{\scalebox{.8}{$\lambda$}}
\rsdraw{.4}{.9}{lg1}\;=\;\rsdraw{.4}{.9}{lg2}\;=\,\sigma_I\;\rsdraw{.4}{.9}{lg3}\;.
$$
\item $(\alpha \otimes \id_A)\Delta \Lambda  = (\id_A \otimes \alpha)\Delta \Lambda = \sigma_I S\Lambda$. Graphically:
$$
\psfrag{a}[Bc][Bc]{\scalebox{.8}{$\alpha$}}
\psfrag{d}[Bc][Bc]{\scalebox{.8}{$\Lambda$}}
\rsdraw{.5}{.9}{La1}\;=\;\rsdraw{.5}{.9}{La2}\;=\,\sigma_I\;\rsdraw{.5}{.9}{La3}\;.
$$
\item $\lambda \mu \tau_{A,A}=\lambda \mu \bigl((\alpha \otimes \id_A)\Delta \otimes \id_A\bigr)$. Graphically:
$$
\psfrag{a}[Bc][Bc]{\scalebox{.8}{$\alpha$}}
\psfrag{b}[Bc][Bc]{\scalebox{.8}{$\lambda$}}
\rsdraw{.4}{.9}{lt1}\;=\;\rsdraw{.4}{.9}{lt2}\;.
$$
\item $\tau_{A,A} \Delta \Lambda=\bigl(\id_A \otimes \mu(\id_A \otimes g)\bigr)\Delta \Lambda$. Graphically:
$$
\psfrag{u}[Bc][Bc]{\scalebox{.8}{$g$}}
\psfrag{d}[Bc][Bc]{\scalebox{.8}{$\Lambda$}}
\rsdraw{.4}{.9}{Lat1}\;=\;\rsdraw{.4}{.9}{Lat2}\;.
$$
\item $g$ is central if and only if $\alpha$ is central.
Graphically:
$$
\psfrag{u}[Bc][Bc]{\scalebox{.8}{$g$}}
\rsdraw{.4}{.9}{centg2}\,=\,\rsdraw{.4}{.9}{centg1}
\quad \Longleftrightarrow \quad
\psfrag{a}[Bc][Bc]{\scalebox{.8}{$\alpha$}}
\rsdraw{.45}{.9}{centa1}\,=\,\rsdraw{.45}{.9}{centa2}\;\,.
$$
\end{enumerate}
\end{lemma}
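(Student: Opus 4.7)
My plan is to prove the six parts in the order (d), (e), (b), (c), (a), (f), each step building on the previous ones. Throughout, the key ingredients are the defining identities \eqref{eq-def-g} for $g$ and \eqref{eq-def-alpha} for $\alpha$; the left integral axiom for $\Lambda$, the right cointegral axiom for $\lambda$, and the normalization $\lambda\Lambda=\id_I$; the universality of $\Lambda$ and $\lambda$, which upgrades any equality of left or right (co)integrals to a unique scalar coincidence; and the involutivity $S^2=\id_A$, which makes $S\Lambda$ a universal right integral and $\lambda S$ a universal left cointegral.

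I would first derive (d) and (e) as twisted Frobenius identities. For (d), I would compose $\lambda$ with the defining relation $\mu(\Lambda \otimes \id_A)=\Lambda \otimes \alpha$, insert the antipode resolution $\mu(S \otimes \id_A)\Delta=\eta\varepsilon$ to create a strand that can be commuted past $\tau_{A,A}$, and contract using coassociativity and the normalization $\lambda\Lambda=\id_I$. Part (e) is the coalgebraic dual, obtained by an analogous manipulation starting from \eqref{eq-def-g} and using the cointegral property of $\lambda$.

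For (b), the key observation is that both $\lambda\mu(g \otimes \id_A)$ and $\lambda\mu(\id_A \otimes g)$ are left cointegrals: applying \eqref{eq-def-g} to $ga$ (respectively $ag$) and using the invertibility of $g$ with convolution inverse $Sg$ yields $(\id_A \otimes \lambda\mu(g \otimes \id_A))\Delta=\eta \otimes \lambda\mu(g \otimes \id_A)$ and similarly for $\lambda\mu(\id_A \otimes g)$. Since $\lambda S$ is a universal left cointegral, each morphism equals a unique scalar multiple of $\lambda S$. To identify the scalar as $\sigma_I$, I would use a two-strand diagrammatic argument where the braiding between two copies of the $I$-colored strand (arising when $\lambda \otimes \lambda$ or $\Lambda \otimes \Lambda$ appears) introduces the factor $\sigma_I$ via \eqref{eq-sigma-def}. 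Part (c) then follows from the same argument applied to the reverse Hopf algebra $A^\reve$ of Remark~\ref{rem-revese-gp}, under which $\alpha$ becomes the distinguished grouplike and $g$ the distinguished algebra morphism.

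Part (a) is an immediate corollary of (b) and (c): composing $\lambda\mu(\id_A \otimes g)=\sigma_I\lambda S$ with $\Lambda$ on the right gives $\alpha g \cdot \id_I=\sigma_I\,\lambda S\Lambda$, while composing $\lambda\mu(g \otimes \id_A)=\sigma_I\lambda S$ with $\Lambda$ gives $\id_I=\sigma_I\,\lambda S\Lambda$; comparing the two yields $\alpha g=1$, and then $\alpha^k g^l = \alpha(g)^{kl} = 1$ follows since $\alpha$ is an algebra morphism and $g$ is grouplike. Finally, (f) follows by combining (b)--(e): substituting the $g$-centrality condition $\mu(g \otimes \id_A)=\mu(\id_A \otimes g)$ into (e) and applying the universality of $\Lambda$ together with (c) translates it into the $\alpha$-centrality condition $(\alpha \otimes \id_A)\Delta=(\id_A \otimes \alpha)\Delta$; the converse direction is obtained symmetrically using (b) and (d). The main obstacle I foresee is the careful bookkeeping of the $\sigma_I$ scalars, which only arise when two $I$-colored strands get swapped by the braiding; tracking precisely where they appear and whether they cancel is what distinguishes this symmetric monoidal version from Radford's classical Hopf-algebraic formulas, where $I=\un$ and $\sigma_I=1$.
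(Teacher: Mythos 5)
Your overall architecture --- Frobenius-type identities obtained by contracting Hopf-algebra relations with $\lambda$ and $\Lambda$, universality of $\lambda S$ as a left cointegral, passage to the reverse Hopf algebra $A^\reve$ for the dual halves (c) and (e), and the derivation of (a) by composing the two expressions in (b) with $\Lambda$ --- matches the paper's. But two steps are genuinely underdeveloped. First, for part (d) your recipe (``compose $\lambda$ with $\mu(\Lambda\otimes\id_A)=\Lambda\otimes\alpha$, insert the antipode resolution, contract using $\lambda\Lambda=\id_I$'') omits the essential nondegeneracy input. The identity $\lambda\mu\tau_{A,A}=\lambda\mu\bigl((\alpha\otimes\id_A)\Delta\otimes\id_A\bigr)$ is an equality of morphisms $A\otimes A\to I$; contracting against the integral only verifies it after precomposition with $\id_A\otimes\Delta\Lambda$. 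The paper must therefore prove (Claim~\ref{claim-pr5}) that the map $e\mapsto(e\otimes\id_A)(\id_A\otimes\Delta\Lambda)$ is injective, which in turn requires showing that $\omega=\mu(g\otimes\id_A)S(\id_A\otimes\alpha)\Delta$ is an isomorphism (using the convolution-invertibility of $g$ and $\alpha$). Nothing in your sketch supplies this cancellation step, and without it the argument for (d) --- and hence for (e) --- does not close.

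Second, part (f) does not simply ``follow by combining (b)--(e)''. Centrality of $\alpha$ is the identity $(\alpha\otimes\id_A)\Delta=(\id_A\otimes\alpha)\Delta$ on all of $A$, whereas (e) together with the universality of $\Lambda$ only controls morphisms out of $I$ that are integrals; substituting $g$-centrality into (e) yields a statement about $\Delta\Lambda$, not about $\Delta$. The paper's proof of (f) is the longest computation of the section: it reduces both centrality conditions to the single unconditional identity $(\alpha\otimes\id_A\otimes\alpha^{-1})\Delta_3\mu_3(g^{-1}\otimes\id_A\otimes g)=\id_A$, established by a chain of the Frobenius identities of Claim~\ref{claim-pr2} with careful tracking of $\sigma_I$. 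A related, milder worry concerns your scalar identification in (b): writing $\xi=\lambda\mu(g\otimes\id_A)=x\,\lambda S$ by universality and composing with $\Lambda$ gives $x\,\lambda S\Lambda=\id_I$, and evaluating $\lambda S\Lambda=\sigma_I\,\alpha g$ already presupposes $\alpha g=1$, which you propose to deduce from (b); to break this circle you need the direct computation $\lambda\mu(\id_A\otimes g)=\sigma_I\,\lambda S$ via the Frobenius identity of Claim~\ref{claim-pr2}(c) (the paper's Claim~\ref{claim-pr3}), which your ``two-strand diagrammatic argument'' gestures at but does not carry out.
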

We prove Lemma~\ref{lem-ppte-integ-invol} in the Section~\ref{sect-proof-lem-integ-invol}.

\subsection{Good pairs from integrals}
Let $A=(A,\mu,\eta,\Delta,\varepsilon,S)$ be an involutory Hopf algebra in $\sss$. We assume that the assumption (A1) of Section~\ref{sect-dist-elts} is satisfied, i.e., that there are a universal left integral $\Lambda \co I \to A$ and a universal right cointegral  $\lambda \co A \to I$ of~$A$, where $I$ is an invertible object of $\sss$, such that $\lambda\Lambda=\id_I$.
Let us consider the distinguished grouplike element $g \in \Hom_\sss(\un,A)$ and the distinguished algebra morphism $\alpha \in \Hom_\sss(A,\un)$ of $A$ (see Section~\ref{sect-dist-elts}).

In addition to the assumption (A1), we also assume:
\begin{enumerate}
\labeli
\setcounter{enumi}{1}
\item[(A2):] The category $\sss$ is preadditive (meaning that each Hom set has the structure of an abelian group, and composition and monoidal product of morphisms is biadditive).
\item[(A3):] The distinguished morphisms $g$ and $\alpha$ have finite order (in the monoids $\Hom_\sss(\un,A)$ and $\Hom_\sss(A,\un)$).
\item[(A4):] The orders of $g$ and $\alpha$ are invertible in $\End_\sss(\un)$.
\item[(A5):] The distinguished grouplike element $g$ is central or, equivalently, the distinguished algebra morphism $\alpha$ is central (see Lemma~\ref{lem-ppte-integ-invol}(f)).
\end{enumerate}
Denote by $m$ and $n$ the order of $g$ and $\alpha$, respectively.
Set
$$
\phi=\frac{1}{m}\sum_{k=0}^{m-1} \lambda \mu(\id_A \otimes g^k) \co A \to I
\mand
\Omega=\frac{1}{n}\sum_{l=0}^{n-1} (\alpha^l \otimes \id_A)\Delta \Lambda  \co I \to A.
$$
Graphically:
$$
\psfrag{a}[Bc][Bc]{\scalebox{.8}{$\alpha^l$}}
\psfrag{u}[Bc][Bc]{\scalebox{.8}{$g^k$}}
\psfrag{b}[Bc][Bc]{\scalebox{.8}{$\lambda$}}
\psfrag{d}[Bc][Bc]{\scalebox{.8}{$\Lambda$}}
\phi=\frac{1}{m}\sum_{k=0}^{m-1} \; \rsdraw{.4}{.9}{ga-phi}
\mand
\Omega=\frac{1}{n}\sum_{l=0}^{n-1} \; \rsdraw{.45}{.9}{ga-Omega} \;.
$$

\begin{theorem}\label{thm-gp}
The pair $(\phi,\Omega)$ is a good pair of $A$ with $\nu_{(\phi,\Omega)}=\sigma_I$.
\end{theorem}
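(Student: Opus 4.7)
The plan is to verify the five axioms (GP1)--(GP5) in turn and compute $\nu_{(\phi,\Omega)}=\sigma_I$ along the way. Write $\phi=\lambda\circ P_g$ and $\Omega=Q_\alpha\circ\Lambda$, where $P_g=\frac{1}{m}\sum_{k=0}^{m-1}\mu(\id_A\otimes g^k)$ and $Q_\alpha=\frac{1}{n}\sum_{l=0}^{n-1}(\alpha^l\otimes\id_A)\Delta$. The centrality hypothesis (A5) together with $\alpha(g)=1$ from Lemma~\ref{lem-ppte-integ-invol}(a) makes $P_g$ and $Q_\alpha$ into well-defined commuting idempotents and supplies a battery of useful commutation relations (e.g.\ $R_gQ_\alpha=Q_\alpha R_g$, $R_\alpha Q_\alpha=Q_\alpha R_\alpha$, where $R_x$ denotes right multiplication and $L_\alpha=(\alpha\otimes\id_A)\Delta$). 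The main calculational tools are Lemma~\ref{lem-ppte-integ-invol}, the involutivity $S^2=\id_A$, the identities $Sg=g^{-1}$ and $\alpha(g)=1$, the universality-based identity $\mu(\Lambda\otimes\id_A)=\Lambda\otimes\alpha$, and the coassociativity-derived transport identities $\Delta\circ L_\alpha=(L_\alpha\otimes\id_A)\Delta$, $(\id_A\otimes L_\alpha)\Delta=(R_\alpha\otimes\id_A)\Delta$, and $\Delta\circ R_\alpha=(\id_A\otimes R_\alpha)\Delta$.

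I would handle (GP3) and (GP1) first. For (GP3), Lemma~\ref{lem-ppte-integ-invol}(b) together with the anti-multiplicativity of $S$, $Sg=g^{-1}$, and centrality of $g$ yields $\lambda\mu(\id_A\otimes g^k)\circ S=\sigma_I\,\lambda\mu(\id_A\otimes g^{1-k})$; summing over $k\in\Z/m\Z$ gives $\phi S=\sigma_I\phi$, and Lemma~\ref{lem-ppte-integ-invol}(c) dually yields $S\Omega=\sigma_I\Omega$. For (GP1), the distinguished grouplike identity $(\id_A\otimes\lambda)\Delta=g\otimes\lambda$ together with $\alpha(g)=1$ forces $\lambda\circ L_\alpha=\lambda$, hence $\phi\circ Q_\alpha=\phi$ and $\phi\Omega=\phi\Lambda=\lambda P_g\Lambda$. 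Since $\mu(\Lambda\otimes g^k)=\Lambda\otimes\alpha(g^k)=\Lambda$, the sum collapses to $\lambda\Lambda=\id_I$. Combining the two results: $\nu_{(\phi,\Omega)}=\norm{\phi S\Omega}_I=\sigma_I\norm{\phi\Omega}_I=\sigma_I$.

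For (GP4), take $f=L_\alpha$: the transport identity $\Delta\circ L_\alpha=(L_\alpha\otimes\id_A)\Delta$ combined with $L_\alpha\Omega=\Omega$ (from $L_\alpha Q_\alpha=Q_\alpha$) yields $\Delta\Omega=(f\otimes\id_A)\Delta\Omega$, while the first equation follows from Lemma~\ref{lem-ppte-integ-invol}(d) together with the commutation of $L_\alpha$ with each $R_{g^k}$. For (GP5), take $h=R_g$: the equation $\phi\mu(\id_A\otimes R_g)=\phi\mu$ is immediate from reindexing $k\mapsto k+1$ in the sum for $\phi$, and the equation $\tau_{A,A}\Delta\Omega=(\id_A\otimes R_g)\Delta\Omega$ is proved by showing that both sides equal $(\id_A\otimes R_g)\Delta\Lambda$, via the chain $\tau_{A,A}\Delta\Omega=(\id_A\otimes Q_\alpha)\tau_{A,A}\Delta\Lambda=(\id_A\otimes R_gQ_\alpha)\Delta\Lambda=(\id_A\otimes R_g)\Delta\Lambda$ (using Lemma~\ref{lem-ppte-integ-invol}(e), the commutation $R_gQ_\alpha=Q_\alpha R_g$, and the key identity $(\id_A\otimes Q_\alpha)\Delta\Lambda=\Delta\Lambda$). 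This key identity itself follows from $\tilde Q_\alpha\Lambda=\Lambda$ (where $\tilde Q_\alpha=\frac{1}{n}\sum_l R_{\alpha^l}$) via the transport $\Delta\circ R_\alpha=(\id_A\otimes R_\alpha)\Delta$ and a short application of Lemma~\ref{lem-ppte-integ-invol}(e).

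The main technical obstacle will be axiom (GP2). The identity $(\phi\otimes\phi)H=\phi\otimes\phi$ unfolds into the assertion that the double sum $\frac{1}{m^2}\sum_{k_1,k_2}\lambda(ab_1 g^{k_1})\otimes\lambda(b_2 g^{k_2})$ collapses, after averaging, to $\phi(a)\otimes\phi(b)$. My plan is to exploit $\Delta g^k=g^k\otimes g^k$ to redistribute a power of $g$ across both factors, apply the right-cointegral identity $(\lambda\otimes\id_A)\Delta=\lambda\otimes\eta$ to decouple the two $\lambda$-evaluations, and invoke centrality of $g$ together with $\alpha(g)=1$ to re-index the sum and recover $\phi(a)\otimes\phi(b)$. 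The dual identity $H(\Omega\otimes\Omega)=\Omega\otimes\Omega$ is proved symmetrically using the left-integral property of $\Lambda$ and the averaging over $\alpha^l$. The delicate interplay between the two averagings and the structural maps $\mu$, $\Delta$ is the crux of the argument.
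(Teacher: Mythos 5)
Your overall strategy coincides with the paper's: the same averaging operators, the same choices $f=(\alpha\otimes\id_A)\Delta$ and $h=\mu(\id_A\otimes g)$, the same appeals to Lemma~\ref{lem-ppte-integ-invol}, and the same use of the reverse Hopf algebra for the ``dual'' halves of each axiom. Your treatments of (GP1), (GP3) (including $\nu_{(\phi,\Omega)}=\sigma_I$) and the first halves of (GP4)--(GP5) are essentially the paper's computations. However, there is one genuinely false step: the ``key identity'' $(\id_A\otimes Q_\alpha)\Delta\Lambda=\Delta\Lambda$ (equivalently $\tilde Q_\alpha\Lambda=\Lambda$) invoked twice in your (GP5) does not hold in general. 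Applying $\varepsilon\otimes\id_A$ to it gives $Q_\alpha\Lambda=\Lambda$, i.e.\ $\Omega=\Lambda$; since $\frac1n\sum_l L_{\alpha}^{\,l}$ is the averaging idempotent onto the fixed points of $L_\alpha=(\alpha\otimes\id_A)\Delta$, this is equivalent to $L_\alpha\Lambda=\Lambda$, hence by Lemma~\ref{lem-ppte-integ-invol}(c) to $S\Lambda=\sigma_I\Lambda$, which fails whenever $\alpha\neq\varepsilon$ (e.g.\ for the reverse/dual of $\Hn$, where $\Omega$ becomes the morphism $\phi$ of Example~\ref{sect-ex-gp-from-integs}, with $\phi(\ns t^k)=\tfrac1n 1_\kk\neq\delta_{k,0}1_\kk=\lambda(\ns t^k)$). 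Fortunately the error is self-cancelling: your chain correctly establishes $\tau_{A,A}\Delta\Omega=(\id_A\otimes R_gQ_\alpha)\Delta\Lambda$, and under (A5) one also has $\Delta Q_\alpha=(\id_A\otimes Q_\alpha)\Delta$, so $(\id_A\otimes R_g)\Delta\Omega=(\id_A\otimes R_gQ_\alpha)\Delta\Lambda$ as well; simply delete the last (false) step and compare the two sides at $(\id_A\otimes R_gQ_\alpha)\Delta\Lambda$. The paper sidesteps this entirely by deducing (GP5) from (GP4) applied to $A^\reve$.

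Two smaller points. First, (GP2) is left as a plan, and the identity you name for decoupling the two $\lambda$-evaluations, $(\lambda\otimes\id_A)\Delta=\lambda\otimes\eta$, is not the one that does the job: in $\lambda(ab_{(1)}g^{k_1})\otimes\lambda(b_{(2)}g^{k_2})$ the cointegral sits on the \emph{second} Sweedler leg, so the correct tool is the distinguished-grouplike identity $(\id_A\otimes\lambda)\Delta=g\otimes\lambda$ of \eqref{eq-def-g}, applied to $bg^{k_2}$; it yields $b_{(1)}\otimes\lambda(b_{(2)}g^{k_2})=g^{1-k_2}\otimes\lambda(bg^{k_2})$, after which the reindexing you describe (using $\phi\mu(\id_A\otimes g^t)=\phi$) finishes the argument --- this is exactly the paper's computation, and it needs no centrality. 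Second, in the first equation of (GP4) the commutation $L_\alpha R_{g^k}=R_{g^k}L_\alpha$ only uses $\alpha(g)=1$; the place where (A5) genuinely enters is the subsequent step $a_{(2)}g^k b=g^k a_{(2)}b$ (moving $g^k$ past the other argument of $\mu$), which you should make explicit, since (A5) is precisely the hypothesis not implied by (A1)--(A4).
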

We prove Theorem~\ref{thm-gp} in Section~\ref{sect-proof-thm-gp}.

\subsection{Remarks} 1) Assume that the assumption (A1) is satisfied (recall this is the case when $\sss$ is left rigid and all idempotents in~$\sss$ split). If $\sss$ is $\kk$-linear where $\kk$ is a field (meaning that each Hom set is a $\kk$-vector space and the composition and monoidal product are $\kk$-linear) and if $\Hom_\sss(\un,A)$ and $\Hom_\sss(A,\un)$ are finite dimensional, then the assumptions (A2) and (A3) are satisfied.

2) The assumption (A2) implies that $\End_\sss(\un)$ is a (commutative) ring. Note that if this ring has characteristic zero, then the assumption (A4) is satisfied.

3) If $g=\eta$ or $\alpha=\varepsilon$, then the assumption (A5) is  satisfied. Note that if $g=\eta$ then $\phi= \lambda$, and  if  $\alpha=\varepsilon$ then  $\Omega= \Lambda$.

4) The assumption  (A5) cannot be deduced from the assumptions (A1)-(A4), as  is shown in Example~\ref{sect-c-ex-A5} below.

\subsection{Example}\label{sect-ex-gp-from-integs}
Let $\kk$ be a field, $n$ be a positive integer not divisible by the characteristic of $\kk$, and $\param\in\kk$.
Consider the Hopf algebra $\Hn$ in $\sss=\mathrm{SVect}_\kk$ defined in Example~\ref{sect-ex-super-HA1}. Recall that the $(\Z/2\Z)$-graded vector space $I$ defined by  0 in degree~0 and~$\kk$ in degree 1 is an invertible object of $\mathrm{SVect}_\kk$ such that $\sigma_I=-1$.
Define the morphisms
$$
\lambda \co \Hn \to I \mand \Lambda \co I \to \Hn
$$
by setting, for $0 \leq k \leq n-1$,
$$
\lambda(t^k)=0, \quad \lambda(\ns t^k)=\delta_{k,0} 1_\kk, \mand  \Lambda(1_\kk)=\ns (1+t+ \cdots + t^{n-1}).
$$
Then $\lambda$ is a universal right cointegral of $\Hn$ and $\Lambda$ is a universal two-sided integral of $\Hn$ such that $\lambda \Lambda=\id_I$. The distinguished grouplike element of $\Hn$ is the morphism $g \co \un \to \Hn$ defined by
$$
g(1_\kk)=t.
$$
It has order $n$ in $\Hom_\sss(\un,\Hn)$. The distinguished algebra morphism of $\Hn$ is $\alpha=\varepsilon$ and has order 1 in $\Hom_\sss(\Hn,\un)$. The assumptions (A1)-(A5) are satisfied. Theorem~\ref{thm-gp} yields a good pair $(\phi,\Omega)$ of $\Hn$ given by
$$
\phi=\frac{1}{n}\sum_{k=0}^{n-1} \lambda \mu(\id_{\Hn} \otimes g^k) \mand \Omega=(\alpha^0 \otimes \id_{\Hn})\Delta \Lambda=\Lambda.
$$
For $0 \leq k \leq n-1$, we obtain
$$
\phi(t^k)=0 \mand \phi(\ns t^k)=\frac{1}{n}  1_\kk.
$$
Note that the good pair $(\phi,\Omega)$ is that used in Example~\ref{sect-ex-calcul-inv}.

\subsection{Example}\label{sect-c-ex-A5}
The following example shows that the assumption  (A5) is independent of the assumptions (A1)-(A4).

Let $n=(n_1,n_2)$ be a pair of positive integers with the greatest common divisor $d$. Let $\kk$ be a field and $\omega\in\kk$ be a $d$-th root of unity.  Consider the $\kk$-algebra $A^n_\omega$ generated by $t_1,t_2,\theta_1,\theta_2$
submitted to the relations
\begin{align*}
& t_1^{n_1}=1=t_2^{n_2}, && t_1\theta_1=\theta_1 t_1, && t_1\theta_2=\omega\, \theta_2t_1, && t_1t_2=t_2t_1,\\
& \theta_1^2=0=\theta_2^2, && t_2\theta_2=\theta_2 t_2, && t_2\theta_1=\omega^{-1}\theta_1 t_2, && \theta_1\theta_2=-\omega\, \theta_2\theta_1.
\end{align*}
Note that $A^n_\omega$ is finite-dimensional (over $\kk$) with basis
$$
\bigl\{t_1^kt_2^l,\, \theta_1t_1^kt_2^l,\, \theta_2t_1^kt_2^l,\, \theta_1 \theta_2 t_1^kt_2^l \bigr\}_{0 \leq  k < n_1, \, 0 \leq l < n_2}
$$
and that $t_i$ is invertible with  the  inverse $t_i^{-1}=t_i^{n_i-1}$ for $i\in\{1,2\}$.
By assigning the degrees
$$
|t_1|=|t_2|=0 \mand |\theta_1|=|\theta_2|=1,
$$
we obtain that  $A^n_\omega$ is an algebra in the category $\sss=\mathrm{SVect}_\kk$ of super \kt vector spaces.  Moreover, it becomes an involutory Hopf algebra in $\sss$ with coproduct $\Delta$, counit $\varepsilon$, and antipode $S$ given for all $i\in\{1,2\}$ by
\begin{align*}
& \Delta (t_i)=t_i\otimes t_i, && \varepsilon(t_i)=1, && S(t_i)=t_i^{-1}, \\
& \Delta(\theta_i)=t_i\otimes \theta_i+\theta_i\otimes 1, &&  \varepsilon(\theta_i)=0, && S(\theta_i)=-\theta_i t_i^{-1}.
\end{align*}
Recall that the monoidal unit $\un$ of $\sss$ is defined by $\kk$ in degree~0 and~$0$ in degree 1.
Define the morphisms
$$
\lambda \co A^n_\omega \to \un \mand \Lambda \co \un \to A^n_\omega
$$
by setting
$$
\lambda(t_1^{k}t_2^{l})=\lambda(\theta_1t_1^{k}t_2^{l})=\lambda(\theta_2t_1^{k}t_2^{l})=0, \quad \lambda(\theta_1 \theta_2t_1^{k}t_2^{l})=\delta_{k,0}\,\delta_{l,0}\, 1_\kk,
$$
and
$$
\Lambda(1_\kk)=\sum_{\substack{0\leq k <n_1\\ 0 \leq l <n_2}} \omega^{k-l}\, \theta_1\theta_2 t_1^k t_2^l.
$$
Then $\lambda$ is a universal right cointegral of $A^n_\omega$ and $\Lambda$ is a universal left integral of~$A^n_\omega$ such that $\lambda \Lambda=\id_\un$. The distinguished grouplike element $g \co \un \to A^n_\omega$ of $A^n_\omega$ is defined by
$$
g(1_\kk)=t_1t_2.
$$
Its order in $\Hom_\sss(\un,A^n_\omega)$ is the least common multiple $m=n_1n_2/d$  of $n_1$ and $n_2$. The distinguished algebra morphism $\alpha\colon A^n_\omega\to\un$ of $A^n_\omega$ is defined by
$$
\alpha(t_1)=\omega^{-1}, \quad \alpha(t_2)=\omega, \quad \alpha(\theta_1)=\alpha(\theta_2)=0.
$$
It has order $d$ in $\Hom_\sss(A^n_\omega,\un)$. Thus, the assumptions (A1)-(A4) are satisfied for~$A^n_\omega$.
Moreover, the assumption  (A5) is satisfied (i.e., $g$ is central)  if and only if~$\omega=1_\kk$. Consequently, if $\omega\neq 1_\kk$, then the assumptions (A1)-(A4) are satisfied while the assumption  (A5) is not.

\subsection{Proof of Lemma~\ref*{lem-ppte-integ-invol}}\label{sect-proof-lem-integ-invol}
We first state and prove several claims.

\begin{claim}\label{claim-pr1}
The following equalities hold:
\begin{enumerate}
\labela
\item $(\mu \otimes \id_A)(\id_A \otimes \Delta)=(\id_A \otimes \mu)(\tau_{A,A} \otimes \id_A)(S \otimes\Delta \mu) (\tau_{A,A}\Delta \otimes \id_A)$.\\ Graphically:
$$
\rsdraw{.45}{.9}{F1-2a}\;=\;\rsdraw{.45}{.9}{F1-1b}\;\,.
$$
\item $(\id_A \otimes \mu)(\Delta \otimes \id_A)=(\mu \otimes \id_A)(\id_A \otimes \tau_{A,A})(\Delta \mu\otimes S)(\id_A \otimes \tau_{A,A}\Delta)$.\\ Graphically:
$$
\rsdraw{.45}{.9}{F1-1a}\;=\;\rsdraw{.45}{.9}{F1-2b}\;\,.
$$
\item $(\id_A \otimes \mu)(\tau_{A,A} \otimes \id_A)(\id_A \otimes \Delta) = (\mu \otimes \id_A) (S \otimes\Delta \mu) (\Delta \otimes \id_A)$.\\ Graphically:
$$
\rsdraw{.45}{.9}{F1-3a}\;=\;\rsdraw{.45}{.9}{F1-3b}\;\,.
$$
\item $(\mu \otimes \id_A)(\id_A \otimes \tau_{A,A})(\Delta \otimes \id_A)=(\id_A \otimes \mu)(\Delta \mu\otimes S)(\id_A \otimes \Delta)$.\\ Graphically:
$$
\rsdraw{.45}{.9}{F1-4a}\;=\;\rsdraw{.45}{.9}{F1-4b}\;\,.
$$
\end{enumerate}
\end{claim}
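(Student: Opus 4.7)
Each of the four identities can be established by a direct Sweedler-notation computation. The key inputs are coassociativity of $\Delta$, the fact that $\Delta$ is an algebra morphism, the counit axiom, and one of the antipode identities $\mu(S \otimes \id_A)\Delta = \eta \varepsilon = \mu(\id_A \otimes S)\Delta$. Parts (a) and (b) additionally use $S^2 = \id_A$, which yields the ``reversed'' antipode identity $S(x_{(2)}) x_{(1)} = \varepsilon(x) \eta$ — obtained by applying $S$ to $x_{(1)} S(x_{(2)}) = \varepsilon(x) \eta$ and using the anti-multiplicativity of $S$.

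For part (c), applying the right-hand side to a pair $a \otimes b$ and using that $\Delta(a_{(2)} b) = \Delta(a_{(2)}) \Delta(b)$ gives
\[
\sum S(a_{(1)}) a_{(2)} b_{(1)} \otimes a_{(3)} b_{(2)},
\]
with $\Delta_3 a = a_{(1)} \otimes a_{(2)} \otimes a_{(3)}$ the iterated coproduct read via $(\Delta \otimes \id_A)\Delta$. The antipode axiom applied to the first two factors collapses $\sum S(a_{(1)}) a_{(2)} \otimes a_{(3)}$ to $\eta \otimes a$, and multiplying by $\Delta b$ on the right then recovers the left-hand side $b_{(1)} \otimes a b_{(2)}$. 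Part (d) is proved by the mirror computation, using $\mu(\id_A \otimes S)\Delta = \eta \varepsilon$ to collapse $\sum b_{(1)} \otimes b_{(2)} S(b_{(3)})$ to $b \otimes \eta$.

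For part (a), the same unpacking of the right-hand side produces
\[
\sum a_{(1)} b_{(1)} \otimes S(a_{(3)}) a_{(2)} b_{(2)}.
\]
Reading $\Delta_3$ now via $(\id_A \otimes \Delta)\Delta$, the factor $S(a_{(3)}) a_{(2)}$ is the reversed convolution $S(x_{(2)}) x_{(1)}$ at $x = a_{(2)}$, which collapses to $\varepsilon(a_{(2)}) \eta$ by the involutivity identity above. The counit axiom then gives the left-hand side $a b_{(1)} \otimes b_{(2)}$. Part (b) is proved by the mirror computation, or derived from (a) by passing to the reverse Hopf algebra $A^{\reve}$ in $\sss^{\reve}$, which is again involutory (see Section~\ref{sect-dual-reverse}).

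The main obstacle is the bookkeeping of which parenthesization of $\Delta_3$ and which version of the antipode axiom is needed for each identity: (c) and (d) use the standard axioms, while (a) and (b) require the reversed involutory identity. Graphically, each proof amounts to inserting a single antipode zigzag at the appropriate position and straightening it using the antipode axiom.
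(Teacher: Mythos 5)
Your proof is correct and follows essentially the same route as the paper: expand $\Delta\mu$ on the right-hand side via multiplicativity of the coproduct, then collapse the resulting antipode convolution using the standard antipode axiom for (c), (d) and the twisted identity $\mu\tau_{A,A}(\id_A\otimes S)\Delta=\eta\,\varepsilon=\mu\tau_{A,A}(S\otimes\id_A)\Delta$ (a consequence of $S^2=\id_A$) for (a), (b), exactly as the paper does. One harmless slip: the identity you actually need in (a), namely $S(x_{(2)})x_{(1)}=\varepsilon(x)\eta$, is obtained by applying $S$ to $S(x_{(1)})x_{(2)}=\varepsilon(x)\eta$, whereas applying $S$ to $x_{(1)}S(x_{(2)})=\varepsilon(x)\eta$ as you state yields the other twisted identity $x_{(2)}S(x_{(1)})=\varepsilon(x)\eta$; both hold by symmetric arguments, so nothing breaks.
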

\begin{proof}
The claim is proved by expanding (in the right hand side of the equalities) $\Delta \mu$ using the multiplicativity of the coproduct, i.e., as
$$
\Delta \mu = (\mu \otimes \mu)( \id_A \otimes \tau_{A,A} \otimes \id_A)(\Delta \otimes \Delta),
$$
and then using the naturality of $\tau$, the (co)associativity and (co)unitality of the (co)product, the axiom of the antipode asserting that
$$
\mu(\id_A \otimes S)\Delta=\eta\, \varepsilon=\mu(S \otimes \id_A)\Delta,
$$
and the  involutivity of the antipode implying that
$$
\mu\tau_{A,A}(\id_A \otimes S)\Delta=\eta\, \varepsilon=\mu\tau_{A,A}(S \otimes \id_A)\Delta.
$$
This latter assertion is proved as follows:
$$
\mu \tau_{A,A}(\id_A \otimes S)\Delta \overset{(i)}{=} \mu \tau_{A,A}(S^2 \otimes S)\Delta \overset{(ii)}{=}
S\mu (S \otimes \id_A)\Delta \overset{(iii)}{=} S\eta\, \varepsilon \overset{(iv)}{=} \eta\, \varepsilon.
$$
Here $(i)$ follows from  involutivity of $S$, $(ii)$ from the multiplicativity of~$S$, $(iii)$ from the axiom of $S$, and $(iv)$ from the unitality of $S$.
\end{proof}

\begin{claim}\label{claim-pr2}
The following equalities hold:
\begin{enumerate}
\labela
\item $(\lambda \mu \otimes \id_A)(\id_A \otimes \Delta \Lambda)=\tau_{A,I}(S \otimes \id_I)$.  Graphically:
$$
\psfrag{b}[Bc][Bc]{\scalebox{.8}{$\lambda$}}
\psfrag{d}[Bc][Bc]{\scalebox{.8}{$\Lambda$}}
\rsdraw{.4}{.9}{Pr2-14a}\;=\;\rsdraw{.4}{.9}{Pr2-14b}\;\,.
$$
\item $(\id_A \otimes \lambda \mu)(\Delta \Lambda \otimes\id_A)= \tau_{I,A}\bigl(\id_I \otimes \mu(g \otimes \id_A)S(\id_A \otimes \alpha)\Delta \bigr)$. Graphically:
$$
\psfrag{b}[Bc][Bc]{\scalebox{.8}{$\lambda$}}
\psfrag{d}[Bc][Bc]{\scalebox{.8}{$\Lambda$}}
\psfrag{u}[Bc][Bc]{\scalebox{.8}{$g$}}
\psfrag{a}[Bc][Bc]{\scalebox{.8}{$\alpha$}}
\rsdraw{.4}{.9}{Pr2-15a}\;=\;\rsdraw{.4}{.9}{Pr2-15b}\;\,.
$$
\item $(\id_A \otimes \lambda\mu)(\tau_{A,A} \otimes \id_A)(\id_A \otimes \Delta\Lambda) = (\mu (S\otimes g))\otimes \id_I$. Graphically:
$$
\psfrag{b}[Bc][Bc]{\scalebox{.8}{$\lambda$}}
\psfrag{d}[Bc][Bc]{\scalebox{.8}{$\Lambda$}}
\psfrag{u}[Bc][Bc]{\scalebox{.8}{$g$}}
\rsdraw{.4}{.9}{Pr2-17a}\;=\;\rsdraw{.4}{.9}{Pr2-17b}\;\,.
$$
\item $(\lambda\mu \otimes \id_A)(\id_A \otimes \tau_{A,A})(\Delta\Lambda \otimes \id_A)=\id_I \otimes ((\alpha \otimes S)\Delta)$. Graphically:
$$
\psfrag{b}[Bc][Bc]{\scalebox{.8}{$\lambda$}}
\psfrag{d}[Bc][Bc]{\scalebox{.8}{$\Lambda$}}
\psfrag{a}[Bc][Bc]{\scalebox{.8}{$\alpha$}}
\rsdraw{.4}{.9}{Pr2-16a}\;=\;\;\rsdraw{.4}{.9}{Pr2-16b}\;\,.
$$
\end{enumerate}
\end{claim}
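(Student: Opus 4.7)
The four equalities in Claim~\ref{claim-pr2} are all of the same shape: each one is obtained by pre- and post-composing the corresponding identity from Claim~\ref{claim-pr1} with $\Lambda$ and $\lambda$, after which the integral/cointegral axioms and the defining identities \eqref{eq-def-g} and \eqref{eq-def-alpha} of $g$ and $\alpha$ produce the announced right-hand side. The plan is therefore to treat parts (a)--(d) in parallel, each as a one-line reduction to Claim~\ref{claim-pr1}(a)--(d).

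For part (a), I would post-compose Claim~\ref{claim-pr1}(a) with $\lambda\otimes\id_A$ and, in the $A$-factor of the domain, pre-compose with $\Lambda$. On the right-hand side of Claim~\ref{claim-pr1}(a), a factor $\Delta\mu$ evaluated on $(a_{(1)}\otimes\Lambda)$ appears; the left integral property $\mu(\id_A\otimes\Lambda)=\varepsilon\otimes\Lambda$ absorbs this $A$-factor into $\varepsilon$, yielding $\varepsilon(a_{(1)})S(a_{(2)})=S(a)$ tensored with a fresh copy of $\Delta\Lambda$. The remaining composition $\lambda(\Lambda_{(1)})\otimes\Lambda_{(2)}$ is reduced by the right cointegral axiom $(\lambda\otimes\id_A)\Delta=\lambda\otimes\eta$ to $\lambda(\Lambda)\otimes 1=1_I\otimes 1$ via the normalization $\lambda\Lambda=\id_I$, and the whole composite comes out as $\tau_{A,I}(S\otimes\id_I)$. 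Part (c) is handled by exactly the same recipe applied to Claim~\ref{claim-pr1}(c): the left integral again collapses one $A$-factor into $S(a)$, but now the surviving factor is $(\id_A\otimes\lambda)\Delta\Lambda$, which by \eqref{eq-def-g} equals $g\otimes\id_I$, producing the expected $\mu(S\otimes g)\otimes\id_I$.

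For parts (b) and (d), the same strategy applies but with Claim~\ref{claim-pr1}(b) or (d) as starting point. Here $\Lambda$ is placed on the left of the product $\mu$ with $a$, so the property invoked is no longer the left integral one but the \emph{distinguished algebra morphism} identity $\mu(\Lambda\otimes\id_A)=\Lambda\otimes\alpha$ coming from \eqref{eq-def-alpha} (equivalently $\Lambda\cdot a=\alpha(a)\Lambda$). This produces a factor $\alpha$ on the correct $A$-strand, after which the computation is closed either by the cointegral-grouplike identity $(\id_A\otimes\lambda)\Delta\Lambda=g\otimes\id_I$ (for (b), producing the factor $g$ and the antipode $S$ via the $S$-occurrence in Claim~\ref{claim-pr1}(b)), or by the right cointegral axiom combined with $\lambda\Lambda=\id_I$ (for (d), giving $\lambda(\Lambda(i)_{(1)})\otimes\Lambda(i)_{(2)}=i\otimes 1$ and leaving $\alpha(a_{(1)})S(a_{(2)})$ on the $A$-strand).

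The only ``obstacle'' is bookkeeping: keeping the $I$-factors in the correct positions across the symmetry $\tau$, and not confusing the four Sweedler-type identities
$\lambda(\Lambda(i)_{(1)})\otimes\Lambda(i)_{(2)}=i\otimes 1$ and
$\Lambda(i)_{(1)}\otimes\lambda(\Lambda(i)_{(2)})=g\otimes i$
(respectively the right cointegral axiom applied to $\Lambda$ and the definition of $g$ applied to $\Lambda$) with each other. Each of (a)--(d) then requires only the naturality of $\tau$, the (co)associativity of the (co)product, and one substitution from each of the two families above. No new algebraic input beyond Claim~\ref{claim-pr1}, the integral/cointegral axioms, and the definitions of $g,\alpha$ is needed, so the proof is entirely formal.
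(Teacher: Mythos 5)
Your proposal is correct and follows exactly the paper's own (one-line) proof: compose each identity of Claim~\ref{claim-pr1} with $\lambda$ on the appropriate output strand and $\Lambda$ on the appropriate input strand, then simplify with the left-integral axiom, the right-cointegral axiom, $\lambda\Lambda=\id_I$, and the defining relations \eqref{eq-def-g} and \eqref{eq-def-alpha} of $g$ and $\alpha$, together with naturality of $\tau$. Your element-level checks of which of the two Sweedler identities closes each of (a)--(d) are accurate, so nothing is missing.
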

\begin{proof}
The claim is proved by composing the equalities of Claim~\ref{claim-pr1} with $\lambda \otimes \id_A$ or $\id_A \otimes \lambda$ on the left and with $\Lambda \otimes \id_A$ or $\id_A \otimes \Lambda$ on the right, and then using the naturality of $\tau$ and the properties  \eqref{eq-A1}, \eqref{eq-def-g}, \eqref{eq-def-alpha}.
\end{proof}

Define the morphisms
$$
\psfrag{u}[Bc][Bc]{\scalebox{.8}{$g$}}
\psfrag{b}[Bc][Bc]{\scalebox{.8}{$\lambda$}}
\chi=\rsdraw{.4}{.9}{lg1}=\lambda\mu(\id_A \otimes g)
\mand
\xi=\rsdraw{.4}{.9}{lg2}=\lambda\mu(g \otimes \id_A).
$$

\begin{claim}\label{claim-pr3}
$\chi = \sigma_I  \, \lambda S$.
\end{claim}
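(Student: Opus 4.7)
The plan is to read the claim out of Claim~\ref{claim-pr2}(c). That identity's right-hand side features the morphism $\mu(S\otimes g)$, and $\lambda\mu(S\otimes g)=\chi S$ (using the definition of $\chi$ and of $g\co\un\to A$), so the strategy is first to derive $\chi S=\sigma_I\,\lambda$ and then to post-compose with $S$, using involutivity $S^2=\id_A$, to obtain $\chi=\sigma_I\,\lambda S$.

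Concretely, I would compose $\lambda\otimes\id_I$ on the left of the equation in Claim~\ref{claim-pr2}(c). The right-hand side becomes $(\chi S)\otimes\id_I$, while the left-hand side is
$$(\lambda\otimes\lambda\mu)(\tau_{A,A}\otimes\id_A)(\id_A\otimes\Delta\Lambda),$$
which I would simplify in two moves. Naturality of the symmetry gives $(\lambda\otimes\id_A)\tau_{A,A}=\tau_{A,I}(\id_A\otimes\lambda)$ and thereby isolates the factor $(\lambda\otimes\id_A)\Delta\Lambda$; by the right-cointegral axiom together with the normalization $\lambda\Lambda=\id_I$ from~\eqref{eq-A1}, this factor collapses to $\id_I\otimes\eta$, after which $\mu$ absorbs the $\eta$ via its unit axiom. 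What remains is $(\id_I\otimes\lambda)\tau_{A,I}$, and a second use of naturality rewrites it as $\tau_{I,I}(\lambda\otimes\id_I)$; since $I$ is invertible, the identity \eqref{eq-sigma-def} converts this into $\sigma_I\,\lambda\otimes\id_I$. Equating the two sides of the composition gives $(\chi S)\otimes\id_I=\sigma_I\,\lambda\otimes\id_I$, and the bijection $\Hom_\sss(A,I)\simeq\Hom_\sss(A\otimes I,I\otimes I)$, $f\mapsto f\otimes\id_I$, from Section~\ref{sect-invertible-objects} cancels the trailing $\id_I$ to leave $\chi S=\sigma_I\,\lambda$, whence $\chi=\chi S^2=\sigma_I\,\lambda S$.

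The step requiring the most care is the interior simplification of the left-hand side: on the non-invertible object $A$ the symmetry $\tau_{A,A}$ admits no scalar reduction, and the key trick is to use the right cointegral $\lambda$ applied to the universal integral $\Lambda$ to \emph{absorb} one of the $A$-strands, thereby converting $\tau_{A,A}$ into the scalar-valued $\tau_{I,I}$. Once this conversion is recognized, the remainder is a mechanical unwinding of naturality and the unit axiom for~$\mu$.
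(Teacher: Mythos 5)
Your proposal is correct and follows essentially the same route as the paper: both arguments rest on Claim~\ref{claim-pr2}(c), collapse the factor $(\lambda\otimes\id_A)\Delta\Lambda$ to $\id_I\otimes\eta$ via the right-cointegral axiom and $\lambda\Lambda=\id_I$, convert $\tau_{A,A}$ into $\tau_{I,I}=\sigma_I\,\id_{I\otimes I}$ by naturality, and cancel the trailing $\id_I$ using invertibility of $I$. The only (immaterial) difference is bookkeeping: the paper precomposes with $S$ at the outset so the chain produces $\chi\otimes\id_I=\sigma_I\,\lambda S\otimes\id_I$ directly, whereas you first obtain $\chi S=\sigma_I\,\lambda$ and invoke $S^2=\id_A$ at the end.
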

\begin{proof}
We have:
\begin{gather*}
\psfrag{b}[Bc][Bc]{\scalebox{.8}{$\lambda$}}
\psfrag{d}[Bc][Bc]{\scalebox{.8}{$\Lambda$}}
\psfrag{o}[Bc][Bc]{\scalebox{.8}{$\lambda\Lambda$}}
\psfrag{u}[Bc][Bc]{\scalebox{.8}{$g$}}
\rsdraw{.4}{.9}{Pr3-1}
 \;\;\;\overset{(i)}{=}\; \rsdraw{.4}{.9}{Pr3-2}
 \;\;\overset{(ii)}{=}\;\, \rsdraw{.4}{.9}{Pr3-3}
 \;\;\overset{(iii)}{=}\;\; \rsdraw{.4}{.9}{Pr3-4}
 \;\,\overset{(iv)}{=}\, \sigma_I\, \rsdraw{.4}{.9}{Pr3-5}\;\;.
\end{gather*}
Here $(i)$ follows from Claim~\ref{claim-pr2}(c) and the fact that $S^{-1}=S$, $(ii)$ from the naturality of~$\tau$ and the fact that $\lambda$ is a right cointegral, $(iii)$ from the unitality of $\mu$, the fact that $\lambda \Lambda=\id_I$, and the naturality of~$\tau$, and $(iv)$ from \eqref{eq-sigma-def}. Thus $\chi \otimes \id_I=\sigma_I\, \lambda S \otimes \id_I$. Since the map $? \otimes \id_I$ is bijective because the object $I$ is invertible (see Section~\ref{sect-invertible-objects}), we deduce that
$\chi = \sigma_I  \, \lambda S$.
\end{proof}

\begin{claim}\label{claim-pr3b}
$\chi = \sigma_I \alpha g \, \lambda S$.
\end{claim}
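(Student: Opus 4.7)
The plan is to run the same sort of diagrammatic computation as in Claim~\ref{claim-pr3}, but route through Claim~\ref{claim-pr2}(b) rather than Claim~\ref{claim-pr2}(c). The key observation is that the right-hand side of Claim~\ref{claim-pr2}(b), namely $\tau_{I,A}\bigl(\id_I \otimes \mu(g \otimes \id_A)S(\id_A \otimes \alpha)\Delta\bigr)$, already packages both distinguished morphisms $\alpha$ and $g$ into the diagram. A single application therefore produces the pairing $\alpha g$ demanded on the right-hand side.

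Concretely, I would start from the diagram representing $\chi \otimes \id_I$, with the rightmost $\id_I$ realized as $\lambda\Lambda$ by assumption (A1). I would then invoke Claim~\ref{claim-pr2}(b) to rewrite the sub-diagram containing $\Delta\Lambda$ next to $\lambda\mu$. The next block of manipulations uses, in turn: naturality of the symmetry $\tau$, the right-cointegral property of $\lambda$ to absorb one copy of $\Delta$, the defining identities \eqref{eq-def-g} and \eqref{eq-def-alpha} to extract $g$ and $\alpha$ as scalars, and the involutivity $S^2=\id_A$. This should collapse the composite $\lambda\mu(g \otimes \id_A)S(\id_A \otimes \alpha)\Delta$ into $\alpha g \cdot \lambda S$ (with $\alpha g \in \End_\sss(\un)$ treated as a scalar). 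A final application of \eqref{eq-sigma-def} peels off the factor $\sigma_I$ from the residual $\tau_{I,I}$, yielding $\sigma_I\,\alpha g\,\lambda S \otimes \id_I$. The invertibility of $I$ (Section~\ref{sect-invertible-objects}) then cancels the trailing $\id_I$ and gives $\chi = \sigma_I\, \alpha g\, \lambda S$.

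The main obstacle is precisely the collapse step above: one must slide the grouplike $g$ across $\mu$ to land as a scalar, while simultaneously using \eqref{eq-def-alpha} to slide $\alpha$ past $\Delta$ into a scalar, and using the anti-comultiplicativity and involutivity of $S$ to reassemble the remaining structural morphisms as $\lambda S$. Conceptually this is parallel to the analogous collapse carried out in Claim~\ref{claim-pr3}, but now with extra bookkeeping coming from two distinguished data instead of one. Since every ingredient needed (axioms of a Hopf algebra, the identities \eqref{eq-def-g}--\eqref{eq-def-alpha} for $g$ and $\alpha$, and the cointegral property of $\lambda$) is already in place, no essentially new diagrammatic identity is required beyond those already established in Claim~\ref{claim-pr1} and Claim~\ref{claim-pr2}.
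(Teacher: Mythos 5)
Your proposal correctly locates the source of $\alpha$ and $g$ in Claim~\ref{claim-pr2}(b), but as written it contains a false step and omits two ingredients without which the argument cannot close.

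The false step is the central collapse. Write $\omega=\mu(g\otimes\id_A)S(\id_A\otimes\alpha)\Delta$ for the morphism appearing on the right-hand side of Claim~\ref{claim-pr2}(b); you assert that $\lambda\omega=\alpha g\,\lambda S$. In fact, composing Claim~\ref{claim-pr2}(b) with $\lambda\otimes\id_I$ and using precisely the tools you list (the right-cointegral property of $\lambda$, the relation $\lambda\Lambda=\id_I$, unitality, naturality of $\tau$, and \eqref{eq-sigma-def}) yields $\lambda\omega=\sigma_I\,\lambda$, which differs from $\alpha g\,\lambda S$ in general: for the Hopf algebra of Section~\ref{sect-ex-gp-from-integs} one finds $\lambda\omega(\theta)=-1_\kk$ whereas $\alpha g\,\lambda S(\theta)=-\delta_{0,n-1}\,1_\kk=0$ when $n\geq 2$. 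The scalar $\alpha g$ is not produced by pushing $g$ and $\alpha$ out of $\lambda\omega$ via \eqref{eq-def-g} and \eqref{eq-def-alpha}; it is produced by feeding the grouplike $g$ into $\omega$ itself: since $\Delta g=g\otimes g$, $Sg=g^{-1}$ and $\mu(g\otimes g^{-1})=\eta$, one has $\omega g=\alpha g\,\eta$. This identity is the engine of the paper's proof, and your sketch never isolates it. The second omission is Claim~\ref{claim-pr2}(a): your starting diagram $\chi\otimes\lambda\Lambda$ contains no occurrence of $\Delta\Lambda$, so Claim~\ref{claim-pr2}(b) has nothing to act on; the subdiagram $\Delta\Lambda$ must first be created, which is exactly what Claim~\ref{claim-pr2}(a) (read backwards, using $S^2=\id_A$) accomplishes, and it is also what ultimately supplies the $S$ and the factor $\sigma_I$. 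A corrected version of your plan runs as follows: precompose Claim~\ref{claim-pr2}(b) with $\id_I\otimes g$ and apply $\omega g=\alpha g\,\eta$ to get $(\id_A\otimes\chi)\Delta\Lambda=\alpha g\,(\eta\otimes\id_I)$; then tensor with an extra $A$-input, postcompose with $\lambda\mu\otimes\id_I$, and use Claim~\ref{claim-pr2}(a) together with the naturality of $\tau$, unitality, and \eqref{eq-sigma-def} to identify the two sides with $\sigma_I(\chi S\otimes\id_I)$ and $\alpha g\,(\lambda\otimes\id_I)$; the invertibility of $I$, $\sigma_I^2=1$ and $S^2=\id_A$ then give $\chi=\sigma_I\alpha g\,\lambda S$. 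This is, in substance, the paper's argument.
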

\begin{proof}
Consider the morphism
$$
\omega
\psfrag{u}[Bc][Bc]{\scalebox{.8}{$g$}}
\psfrag{a}[Bc][Bc]{\scalebox{.8}{$\alpha$}}
=\;\rsdraw{.4}{.9}{def-v}\;=\mu(g \otimes \id_A)S(\id_A \otimes \alpha)\Delta.
$$
The fact that $g$ is grouplike implies that $\omega g=\alpha g \, \eta$. We have:
\begin{gather*}
\psfrag{b}[Bc][Bc]{\scalebox{.8}{$\lambda$}}
\psfrag{d}[Bc][Bc]{\scalebox{.8}{$\Lambda$}}
\psfrag{e}[Bc][Bc]{\scalebox{.8}{$\omega g$}}
\psfrag{u}[Bc][Bc]{\scalebox{.8}{$g$}}
\rsdraw{.4}{.9}{Pr4-1}
\;\;\;\overset{(i)}{=}\;\; \rsdraw{.4}{.9}{Pr4-2}
\;\;\;\overset{(ii)}{=}\;\; \rsdraw{.4}{.9}{Pr4-3}\\[.5em]
\psfrag{b}[Bc][Bc]{\scalebox{.8}{$\lambda$}}
\psfrag{d}[Bc][Bc]{\scalebox{.8}{$\Lambda$}}
\psfrag{e}[Bc][Bc]{\scalebox{.8}{$\omega g$}}
\psfrag{u}[Bc][Bc]{\scalebox{.8}{$g$}}
\overset{(iii)}{=}\;\, \rsdraw{.4}{.9}{Pr4-4}
\;\;\overset{(iv)}{=}\;\alpha g \; \rsdraw{.4}{.9}{Pr4-5}
\;\,\overset{(v)}{=}\; \sigma_I\alpha g\;\;\; \rsdraw{.4}{.9}{Pr4-6}\;\;.
\end{gather*}
Here $(i)$ follows from the properties of the symmetry $\tau$, and the involutivity of $S$,  $(ii)$ from Claim~\ref{claim-pr2}(a), $(iii)$ from Claim~\ref{claim-pr2}(b), $(iv)$ from the naturality of $\tau$ and fact that  $\omega g=\alpha g \, \eta$, and $(v)$ from the unitality of $\mu$ and \eqref{eq-sigma-def}. Thus $\id_I \otimes  \chi = \id_I \otimes (\sigma_I  \alpha g \, \lambda S)$ and so $\chi = \sigma_I  \alpha g \, \lambda S$ (because $I$ is invertible).
\end{proof}

\begin{claim}\label{claim-pr4a}
$\alpha g=1$.
\end{claim}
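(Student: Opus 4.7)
The plan is to exploit the two independent expressions for $\chi=\lambda\mu(\id_A\otimes g)$ already established in Claims \ref{claim-pr3} and \ref{claim-pr3b}. Putting those together gives
\[
\sigma_I\,\lambda S \;=\; \chi \;=\; \sigma_I\,(\alpha g)\,\lambda S
\]
as morphisms $A\to I$. Multiplying both sides by $\sigma_I$ and using $\sigma_I^2=1$ from \eqref{eq-sigma-invert-ob}, I would deduce
\[
\lambda S \;=\; (\alpha g)\,\lambda S,
\]
where $\alpha g\in\End_\sss(\un)$ acts on $\Hom_\sss(A,I)$ through the usual scalar action of Section~\ref{sec1}.

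Next I would invoke the universality of the cointegral. Since $\lambda$ is a universal right cointegral of $A$, the morphism $\lambda S\co A\to I$ is a universal left cointegral (by the final remark of Section~\ref{sect-def-integrals}). Both $\lambda S$ and $(\alpha g)\lambda S$ are then left cointegrals $A\to I$, and each provides a factorization of $\lambda S$ through $\lambda S$ itself:
\[
\lambda S \;=\; \id_I\circ\lambda S \;=\; \bigl((\alpha g)\,\id_I\bigr)\circ \lambda S .
\]
By the uniqueness clause in the definition of universality, the factoring morphisms must agree in $\End_\sss(I)$: $(\alpha g)\,\id_I=\id_I$. Finally, the monoid isomorphism $\End_\sss(\un)\to\End_\sss(I)$, $c\mapsto c\,\id_I$ from Section~\ref{sect-invertible-objects} (valid because $I$ is invertible) converts this into the desired scalar identity $\alpha g=1$ in $\End_\sss(\un)$.

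I do not foresee any real obstacle, since Claims \ref{claim-pr3} and \ref{claim-pr3b} do all the heavy lifting. The only conceptual care needed is to remember that the argument must be phrased in terms of the universal factorization rather than a naive cancellation: the category $\sss$ is not assumed preadditive in this lemma, so one cannot simply subtract to get $(\alpha g-1)\lambda S=0$. The universal property of $\lambda S$ is exactly what replaces such a cancellation, and it is what makes the monoid isomorphism $\End_\sss(\un)\cong\End_\sss(I)$ the right tool for the final step.
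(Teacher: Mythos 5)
Your argument is correct and is essentially the paper's own proof: combine Claims~\ref{claim-pr3} and~\ref{claim-pr3b} to get $\sigma_I\,\alpha g\,\lambda S=\sigma_I\,\lambda S$, use the universality of the left cointegral $\lambda S$ to cancel it, and use $\sigma_I^2=1$ to remove $\sigma_I$. Your extra care in spelling out the uniqueness clause of the universal factorization (rather than a naive additive cancellation) is exactly the intended justification.
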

\begin{proof}
Claims~\ref{claim-pr3} and \ref{claim-pr3b} imply that $\sigma_I \alpha g \, \lambda S=\sigma_I  \, \lambda S$. Now, by Section~\ref{sect-def-integrals}, the morphism $\lambda S$ is a universal  left cointegral (because $\lambda$ is a universal right cointegral). Thus $\sigma_I \alpha g =\sigma_I$. Consequently, using \eqref{eq-sigma-invert-ob}, we deduce that $\alpha g =1$.
\end{proof}

\begin{claim}\label{claim-pr4b}
$\xi=\chi$.
\end{claim}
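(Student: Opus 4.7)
The plan is to establish $\xi = \sigma_I\, \lambda S$, which combined with Claim~\ref{claim-pr3} immediately yields $\xi = \chi$. The strategy is to mimic the proof of Claim~\ref{claim-pr3} verbatim, but replacing the use of Claim~\ref{claim-pr2}(c) with Claim~\ref{claim-pr2}(d). Concretely, I would start from
\[
\xi \otimes \id_I = \lambda\mu(g \otimes \id_A) \otimes \lambda\Lambda,
\]
using $\lambda\Lambda = \id_I$ from \eqref{eq-A1}. Then I would rewrite the second tensorand using Claim~\ref{claim-pr2}(d) (the version with $(\alpha \otimes S)\Delta$ on the right-hand side) together with the involutivity $S^{-1} = S$. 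The remaining diagram chase then uses the right cointegral property of $\lambda$ from \eqref{eq-A1}, the naturality of $\tau$, the fact that $g$ is grouplike (to merge the $g$ from $\xi$ with the $\alpha$ produced by (d)), the unitality of $\mu$, and finally \eqref{eq-sigma-def}.

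Because Claim~\ref{claim-pr2}(d) involves $\alpha$ in its right-hand side whereas (c) does not, the computation will produce a scalar factor $\alpha g$ that did not appear in the proof of Claim~\ref{claim-pr3}; so instead of arriving at $\sigma_I\, \lambda S \otimes \id_I$ directly, one obtains $\sigma_I\, \alpha g \cdot \lambda S \otimes \id_I$. This is exactly the analogue, for $\xi$, of the computation that gave Claim~\ref{claim-pr3b} for $\chi$. Applying Claim~\ref{claim-pr4a} ($\alpha g = 1$) collapses the factor and gives $\xi \otimes \id_I = \sigma_I\, \lambda S \otimes \id_I$, and invertibility of $I$ (Section~\ref{sect-invertible-objects}) yields $\xi = \sigma_I\, \lambda S = \chi$.

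The main obstacle is purely bookkeeping: getting the positions of the symmetry braiding and the inserted antipodes right in the graphical manipulation so that the stray $\alpha g$ appears cleanly and can be eliminated by Claim~\ref{claim-pr4a}. Once the template of Claim~\ref{claim-pr3}'s proof is transcribed with (d) replacing (c), no genuinely new idea is required beyond invoking the previously established $\alpha g = 1$.
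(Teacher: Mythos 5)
Your plan is workable and does yield the claim, but it is a genuinely different route from the paper's. The paper never computes $\xi$ diagrammatically: it observes that $\xi$ is a \emph{left cointegral} (a three-line computation from the grouplike property of $g$, the multiplicativity of $\Delta$, and \eqref{eq-def-g}), that $\chi=\sigma_I\,\lambda S$ is a \emph{universal} left cointegral by Claim~\ref{claim-pr3}, hence $\xi=x\,\chi$ for a scalar $x$, and then normalizes by composing with $\Lambda$: $\xi\Lambda=\varepsilon g\,\lambda\Lambda=\id_I$ while $\chi\Lambda=\alpha g\,\lambda\Lambda=\id_I$ by Claim~\ref{claim-pr4a}, so $x=1$. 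Universality replaces almost all of the bookkeeping you defer to. Your direct route can be carried out, but two points in your description need correcting. First, Claim~\ref{claim-pr2}(d) is \emph{not} the left--right mirror of (c): its right-hand side involves $\alpha$ and $\Delta$ and contains no $g$, so a ``verbatim'' transcription of the proof of Claim~\ref{claim-pr3} (where $\chi$ is read off from the term $\mu(S\otimes g)$ on the right of (c)) does not go through, and in particular ``rewriting the tensorand $\lambda\Lambda$ by (d)'' is not a legal move. What does work is to compose both sides of (d) with $\id_I\otimes S$ below and $\id_I\otimes\lambda$ above: on the left-hand side the outer $\lambda$ slides through the crossing onto the second leg of $\Delta\Lambda$, where \eqref{eq-def-g} turns $(\id_A\otimes\lambda)\Delta\Lambda$ into $g\otimes\id_I$ and a $\tau_{I,I}=\sigma_I\,\id_{I\otimes I}$ appears, producing $\sigma_I^{\,0}\,\xi S$ up to that crossing; on the right-hand side the anti-comultiplicativity of $S$ and the right-cointegral property of $\lambda$ collapse $(\alpha\otimes\lambda S)\Delta S$ to $(\alpha S\eta)\,\lambda=\lambda$. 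Comparing the two sides gives $\xi S=\sigma_I\,\lambda$, hence $\xi=\sigma_I\,\lambda S=\chi$. Second, the stray scalar on this route is $\alpha S\eta=1$, not $\alpha g$; your attribution of an $\alpha g$ factor to ``the mechanism of Claim~\ref{claim-pr3b}'' is a misdiagnosis, since that claim is proved from parts (a) and (b) of Claim~\ref{claim-pr2}, not from (d). Neither slip is fatal --- the factor is $1$ either way --- but the detailed execution is not the near-mechanical substitution you describe.
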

\begin{proof}
The morphism  $\chi$ is a universal left cointegral since $\chi = \sigma_I  \, \lambda S$ by Claim~\ref{claim-pr3},
$\lambda S$ is a universal  left cointegral (because $\lambda$ is a universal right cointegral), and the scalar $\sigma_I$ is invertible by~\eqref{eq-sigma-invert-ob}. Now, we have:
\begin{gather*}
\psfrag{b}[Bc][Bc]{\scalebox{.8}{$\lambda$}}
\psfrag{n}[Bc][Bc]{\scalebox{.8}{$g^{-1}$}}
\psfrag{u}[Bc][Bc]{\scalebox{.8}{$g$}}
\rsdraw{.4}{.9}{Pr4b-4}
\;\;\overset{(i)}{=}\; \rsdraw{.4}{.9}{Pr4b-1}
\;\;\overset{(ii)}{=}\; \rsdraw{.4}{.9}{Pr4b-2}
\;\overset{(iii)}{=}\;\; \rsdraw{.4}{.9}{Pr4b-3}\;.
\end{gather*}
Here $(i)$ follows from the fact that $g$ is grouplike, $(ii)$ from the multiplicativity of~$\Delta$, and $(iii)$ from \eqref{eq-def-g}. Thus $(\id_A \otimes \xi)\Delta=\eta \otimes \xi$, that is, $\xi$ is a left cointegral.
The universality of $\chi$  and the bijection $\End_\sss(I) \simeq \End_\sss(\un)$ imply that there is a  scalar  $x \in \End_\sss(\un)$ such that $\xi=x \, \chi$. Now
\eqref{eq-A1} and the fact that $g$ is grouplike imply that
$$
\xi \Lambda=\lambda \mu (g \otimes \Lambda)=\varepsilon g \, \lambda \Lambda = \id_I.
$$
Also, \eqref{eq-def-alpha} and Claim~\ref{claim-pr4a} imply that
$$
\chi \Lambda=\lambda \mu (\Lambda \otimes g)=\alpha g \, \lambda \Lambda = \id_I.
$$
Consequently, composing the equality $\xi=x \, \chi$ with $\Lambda$ on the right gives $\id_I=x\, \id_I$ and so $x=1$.
Therefore $\xi=\chi$.
\end{proof}

\begin{claim}\label{claim-pr5}
The map $\Psi\co \Hom_\sss(A \otimes A,I)   \to   \Hom_\sss(A \otimes I,I \otimes A)$, defined by
$$
\psfrag{d}[Bc][Bc]{\scalebox{.8}{$\Lambda$}}
\psfrag{n}[Bc][Bc]{\scalebox{.8}{$e$}}
\Psi(e)=\,\rsdraw{.4}{.9}{Pr5-1}\,= (e\otimes \id_A)(\id_A \otimes \Delta \Lambda)
$$
for any $e \in \Hom_\sss(A \otimes A,I)$, is injective.
\end{claim}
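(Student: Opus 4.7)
The plan is to construct a left inverse of $\Psi$ using Claim~\ref{claim-pr2}(b). The crucial preliminary is to show that the morphism
$$
\omega = \mu(g \otimes \id_A)\, S\, (\id_A \otimes \alpha)\Delta \co A \to A
$$
appearing on the right-hand side of Claim~\ref{claim-pr2}(b) is invertible in $\End_\sss(A)$. I would verify this by factoring $\omega$ as a composition of three morphisms and checking that each factor is an isomorphism: the left multiplication $\mu(g \otimes \id_A)$ has inverse $\mu(Sg \otimes \id_A)$ because $g$ has inverse $Sg$ in the monoid $\Hom_\sss(\un,A)$; the antipode $S$ is its own inverse by involutivity of $A$; and $(\id_A \otimes \alpha)\Delta$ has composition inverse $(\id_A \otimes \alpha S)\Delta$. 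This last invertibility is the technical heart of the step, but it follows from coassociativity of $\Delta$, the identity $\alpha S * \alpha = \varepsilon$ in $\Hom_\sss(A,\un)$ (a consequence of $\alpha$ being an algebra morphism and of the antipode axiom), and the counit axiom.

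Granted the invertibility of $\omega$, suppose $\Psi(e_1)=\Psi(e_2)$. I would tensor this equality on the right with $\id_A$ and then post-compose with $\id_I \otimes \lambda\mu$. Interchange manipulations together with the definition of $\Psi$ yield
$$
(\id_I \otimes \lambda\mu)(\Psi(e) \otimes \id_A) = (e \otimes \id_I)\bigl(\id_A \otimes (\id_A \otimes \lambda\mu)(\Delta\Lambda \otimes \id_A)\bigr),
$$
and Claim~\ref{claim-pr2}(b) rewrites the bracketed morphism as $\tau_{I,A}(\id_I \otimes \omega)$. Thus the hypothesis becomes
$$
(e_1 \otimes \id_I)(\id_A \otimes \tau_{I,A})(\id_{A \otimes I} \otimes \omega)
= (e_2 \otimes \id_I)(\id_A \otimes \tau_{I,A})(\id_{A \otimes I} \otimes \omega).
$$
The morphism $(\id_A \otimes \tau_{I,A})(\id_{A \otimes I} \otimes \omega)$ on the right is an isomorphism, being the composition of an invertible symmetry with the invertible $\id_{A\otimes I} \otimes \omega$, so cancelling it yields $e_1 \otimes \id_I = e_2 \otimes \id_I$. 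Finally, the bijection $h \mapsto h \otimes \id_I$ from $\Hom_\sss(A \otimes A,\, I)$ onto $\Hom_\sss(A \otimes A \otimes I,\, I \otimes I)$, coming from the invertibility of $I$ (Section~\ref{sect-invertible-objects}), forces $e_1 = e_2$.

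The main obstacle is establishing that $(\id_A \otimes \alpha)\Delta$ admits a composition inverse; once that is in hand, the remainder is a straightforward bookkeeping exercise built around Claim~\ref{claim-pr2}(b) and the bijectivity of the $(-) \otimes \id_I$ map.
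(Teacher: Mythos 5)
Your proposal is correct and follows essentially the same route as the paper: the paper also defines the post-composition map $\Phi(z)=(\id_I\otimes\lambda\mu)(z\otimes\id_A)$, computes $\Phi\Psi(e)$ via Claim~\ref{claim-pr2}(b), and concludes from the invertibility of $\omega=\mu(g\otimes\id_A)S(\id_A\otimes\alpha)\Delta$, of $\tau_{I,A}$, and of the object $I$ that $\Phi\Psi$ (hence $\Psi$) is injective. Your more detailed verification that $\omega$ is an isomorphism simply unpacks the paper's appeal to the invertibility of $g$ and $\alpha$ in their respective convolution monoids (where $\alpha^{-1}=\alpha S$, as recorded in Section~\ref{sect-dist-elts}).
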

\begin{proof}
Define a map
$\Phi\co \Hom_\sss(A \otimes I,I \otimes A) \to \Hom_\sss(A \otimes I \otimes A,I \otimes I)$ by setting
$$
\psfrag{b}[Bc][Bc]{\scalebox{.8}{$\lambda$}}
\psfrag{n}[Bc][Bc]{\scalebox{.8}{$z$}}
\Phi(z)=\;\rsdraw{.4}{.9}{Pr5-2}\,=(\id_I \otimes \lambda \mu)   (z \otimes \id_A)
$$
for any $z \in \Hom_\sss(A \otimes I,I \otimes A)$.
Then, for all $e \in \Hom_\sss(A \otimes A,I)$,
$$
\psfrag{b}[Bc][Bc]{\scalebox{.8}{$\lambda$}}
\psfrag{d}[Bc][Bc]{\scalebox{.8}{$\Lambda$}}
\psfrag{n}[Bc][Bc]{\scalebox{.8}{$e$}}
\psfrag{v}[Bc][Bc]{\scalebox{.8}{$\omega$}}
\psfrag{u}[Bc][Bc]{\scalebox{.8}{$g$}}
\psfrag{a}[Bc][Bc]{\scalebox{.8}{$\alpha$}}
\Phi\Psi(e) \overset{(i)}{=}\; \rsdraw{.4}{.9}{Pr5-3}\; \overset{(ii)}{=}\; \rsdraw{.4}{.9}{Pr5-4} \quad \text{where} \quad
\omega
=\;\rsdraw{.4}{.9}{def-v}\;.
$$
Here $(i)$ follows from the definitions of $\Psi$ and $\Phi$ and $(ii)$ from Claim~\ref{claim-pr2}(b). Now $\omega$
is an isomorphism because $S$ is invertible (as an morphism), $g$ is invertible in the monoid $\Hom_\sss(\un,A)$, and $\alpha$ is invertible in the monoid $\Hom_\sss(A,\un)$. The facts that~$\omega$ and $\tau_{I,A}$ are isomorphims and $I$ is an invertible object imply that $\Phi\Psi$ is injective. Hence, the map $\Psi$ is injective.
\end{proof}

We now prove Lemma~\ref{lem-ppte-integ-invol}.  The first assertion of  Part (a) is Claim~\ref{claim-pr4a},  from which the second assertion  is deduced  by using  the facts that $g$ is grouplike and $\alpha$ is an algebra morphism. Part~(b) is Claims~\ref{claim-pr3} and~\ref{claim-pr4b}.
Part~(c) is just Part (b) applied to the reverse Hopf algebra $A^\reve$ in $\sss^\reve$ (see Section~\ref{sect-dual-reverse}). Indeed, $\lambda$ is a left integral and $\Lambda$ is a right cointegral of $A^\reve$ such that  $\Lambda\lambda=\id_I$ (in~$\sss^\reve$), $\alpha$ is the distinguished grouplike element of $A^\reve$, and $g$ is the distinguished algebra morphism of  $A^\reve$. Let us prove Part~(d). Set
$$
\psfrag{a}[Bc][Bc]{\scalebox{.8}{$\alpha$}}
\psfrag{b}[Bc][Bc]{\scalebox{.8}{$\lambda$}}
d=\;\rsdraw{.4}{.9}{lt1}\;=\lambda \mu \tau_{A,A}
 \mand e=\;\rsdraw{.4}{.9}{lt2}\;= (\alpha\otimes\lambda\mu) \bigl(\Delta\otimes \id_A\bigr).
$$
We need to prove that $d=e$. With the notation of Claim~\ref{claim-pr5}, we have:
\begin{gather*}
\psfrag{b}[Bc][Bc]{\scalebox{.8}{$\lambda$}}
\psfrag{d}[Bc][Bc]{\scalebox{.8}{$\Lambda$}}
\psfrag{a}[Bc][Bc]{\scalebox{.8}{$\alpha$}}
\Psi(d)\,\overset{(i)}{=}\;\;\rsdraw{.45}{.9}{mlt-4}
\;\,\overset{(ii)}{=}\;\;\rsdraw{.45}{.9}{mlt-5}
\,\overset{(iii)}{=}\;\;\rsdraw{.45}{.9}{mlt-6}\\[.5em]
\psfrag{b}[Bc][Bc]{\scalebox{.8}{$\lambda$}}
\psfrag{d}[Bc][Bc]{\scalebox{.8}{$\Lambda$}}
\psfrag{a}[Bc][Bc]{\scalebox{.8}{$\alpha$}}
\;\overset{(iv)}{=}\;\;\rsdraw{.45}{.9}{mlt-7}
\;\;\overset{(v)}{=}\;\;\rsdraw{.45}{.9}{mlt-8}\;\;\overset{(vi)}{=}\,\Psi(e).
\end{gather*}
Here $(i)$ and $(vi)$ follow from the definitions of $\Psi$, $(ii)$ from the properties of the symmetry $\tau$, $(iii)$ from  Claim~\ref{claim-pr2}(d), $(iv)$ from the naturality of $\tau$, and $(v)$ from  Claim~\ref{claim-pr2}(a). Since the map $\Psi$ is injective by Claim~\ref{claim-pr5}, we deduce that $d=e$. This proves Part (d).
Part~(e) is just Part (d) applied to $A^\reve$.

Finally, let us prove Part (f). The invertibility of $g$ in the monoid $\Hom_\sss(\un,A)$ and the associativity of $\mu$ imply that $g$ is central if and only if
$$
\mu_3(g \otimes \id_A \otimes g^{-1})=\id_A.
$$
Similarly, $\alpha$ is central if and only if
$$
(\alpha \otimes \id_A \otimes \alpha^{-1})\Delta_3=\id_A.
$$
Consequently, it suffices  to prove that
$(\alpha \otimes \id_A \otimes \alpha^{-1})\Delta_3=\mu_3(g \otimes \id_A \otimes g^{-1})$ or, equivalently, that
\begin{equation}\label{eq-ga-central-equiv}
(\alpha \otimes \id_A \otimes \alpha^{-1})\Delta_3\mu_3(g^{-1} \otimes \id_A \otimes g)=\id_A.
\end{equation}
Define
$$
\psfrag{u}[Bc][Bc]{\scalebox{.8}{$g$}}
\psfrag{a}[Bc][Bc]{\scalebox{.8}{$\alpha$}}
\omega=\;\rsdraw{.4}{.9}{def-v}\;=\mu(g \otimes \id_A)S(\id_A \otimes \alpha)\Delta.
$$
We have:
\begin{gather*}
\psfrag{b}[Bc][Bc]{\scalebox{.8}{$\lambda$}}
\psfrag{d}[Bc][Bc]{\scalebox{.8}{$\Lambda$}}
\psfrag{v}[Bc][Bc]{\scalebox{.8}{$\omega$}}
\psfrag{u}[Bc][Bc]{\scalebox{.8}{$g$}}
\rsdraw{.4}{.9}{pzrt-f41}
\;\;\;\overset{(i)}{=}\;\; \rsdraw{.4}{.9}{pzrt-f42}
\;\;\;\overset{(ii)}{=}\;\;\, \rsdraw{.4}{.9}{pzrt-f43}\\[.5em]
\psfrag{b}[Bc][Bc]{\scalebox{.8}{$\lambda$}}
\psfrag{d}[Bc][Bc]{\scalebox{.8}{$\Lambda$}}
\psfrag{v}[Bc][Bc]{\scalebox{.8}{$\omega$}}
\psfrag{u}[Bc][Bc]{\scalebox{.8}{$g$}}
\overset{(iii)}{=}\;\, \rsdraw{.4}{.9}{pzrt-f44}
\;\;\overset{(iv)}{=} \;\, \rsdraw{.4}{.9}{pzrt-f45}
\;\,\overset{(v)}{=}\;\, \sigma_I\,\;\;\; \rsdraw{.4}{.9}{pzrt-f46}\;\;.
\end{gather*}
Here $(i)$ follows from the properties of the symmetry $\tau$,  $(ii)$ from Claim~\ref{claim-pr2}(a), $(iii)$ from Claim~\ref{claim-pr2}(b), $(iv)$ from the naturality of $\tau$, and $(v)$ from \eqref{eq-sigma-def}. Then, using the fact that the object $I$ is invertible, we obtain that
\begin{equation}\label{eq-ga-central-1}
\psfrag{b}[Bc][Bc]{\scalebox{.8}{$\lambda$}}
\psfrag{v}[Bc][Bc]{\scalebox{.8}{$\omega$}}
\psfrag{u}[Bc][Bc]{\scalebox{.8}{$g$}}
\psfrag{a}[Bc][Bc]{\scalebox{.8}{$\alpha$}}
\rsdraw{.4}{.9}{pzrt-f1}\;=\;\sigma_I \;\; \rsdraw{.4}{.9}{pzrt-f2} \;\;.
\end{equation}
Now,
\begin{gather*}
\psfrag{b}[Bc][Bc]{\scalebox{.8}{$\lambda$}}
\psfrag{d}[Bc][Bc]{\scalebox{.8}{$\Lambda$}}
\psfrag{v}[Bc][Bc]{\scalebox{.8}{$\omega$}}
\psfrag{a}[Bc][Bc]{\scalebox{.8}{$\alpha$}}
\psfrag{u}[Bc][Bc]{\scalebox{.8}{$g$}}
\psfrag{e}[Bc][Bc]{\scalebox{.8}{$\alpha^{-1}$}}
\psfrag{n}[Bc][Bc]{\scalebox{.8}{$g^{-1}$}}
\rsdraw{.4}{.9}{pzrt-f51}
\,\;\;\;\overset{(i)}{=}\,\;\;\; \rsdraw{.4}{.9}{pzrt-f52}
\,\;\;\;\overset{(ii)}{=}\,\;\;\; \rsdraw{.4}{.9}{pzrt-f53}\\[.5em]
\psfrag{b}[Bc][Bc]{\scalebox{.8}{$\lambda$}}
\psfrag{a}[Bc][Bc]{\scalebox{.8}{$\alpha$}}
\psfrag{d}[Bc][Bc]{\scalebox{.8}{$\Lambda$}}
\psfrag{v}[Bc][Bc]{\scalebox{.8}{$\omega$}}
\psfrag{u}[Bc][Bc]{\scalebox{.8}{$g$}}
\overset{(iii)}{=}\;\; \rsdraw{.4}{.9}{pzrt-f54}
\;\;\overset{(iv)}{=}\;\;\; \rsdraw{.4}{.9}{pzrt-f55}
\;\overset{(v)}{=}\;\; \rsdraw{.4}{.9}{pzrt-f56}\\[.5em]
\psfrag{b}[Bc][Bc]{\scalebox{.8}{$\lambda$}}
\psfrag{a}[Bc][Bc]{\scalebox{.8}{$\alpha$}}
\psfrag{d}[Bc][Bc]{\scalebox{.8}{$\Lambda$}}
\psfrag{v}[Bc][Bc]{\scalebox{.8}{$\omega$}}
\psfrag{u}[Bc][Bc]{\scalebox{.8}{$g$}}
\overset{(vi)}{=}\;\,\rsdraw{.4}{.9}{pzrt-f57}
\;\,\overset{(vii)}{=} \;\, \sigma_I\,\;\; \rsdraw{.4}{.9}{pzrt-f58}
\,\overset{(viii)}{=}\;\, \sigma_I\,\; \rsdraw{.4}{.9}{pzrt-f59}\\[.5em]
\psfrag{b}[Bc][Bc]{\scalebox{.8}{$\lambda$}}
\psfrag{a}[Bc][Bc]{\scalebox{.8}{$\alpha$}}
\psfrag{d}[Bc][Bc]{\scalebox{.8}{$\Lambda$}}
\psfrag{v}[Bc][Bc]{\scalebox{.8}{$\omega$}}
\psfrag{u}[Bc][Bc]{\scalebox{.8}{$g$}}
\overset{(ix)}{=}\;\, \sigma_I^2\,\;\;\; \rsdraw{.4}{.9}{pzrt-f60}
\;\;\overset{(x)}{=} \,\;\;\; \rsdraw{.4}{.9}{pzrt-f61}
\;\;\,\overset{(xi)}{=}\,\;\;\; \rsdraw{.4}{.9}{pzrt-f62}\;\;.
\end{gather*}
Here $(i)$ follows from the (co)associativity of the (co)product and the involutivity of the antipode, $(ii)$ from the computation of the inverse of $g$ an $\alpha$ (see Section~\ref{sect-dist-elts}) and the anti-(co)multiplicativity of the antipode, $(iii)$ from Claim~\ref{claim-pr2}(c) and Claim~\ref{claim-pr2}(d),  $(iv)$ from the (co)associativity of the (co)product and the properties of the symmetry, $(v)$ from  Claim~\ref{claim-pr2}(a), $(vi)$ from  the (co)associativity of the (co)product and the definition of $\omega$,  $(vii)$ from  \eqref{eq-ga-central-1}, $(viii)$ from the properties of the symmetry, $(ix)$ from  \eqref{eq-sigma-def},
$(x)$ from  \eqref{eq-sigma-invert-ob} and  Claim~\ref{claim-pr2}(a), and   $(xi)$ from the involutivity of the antipode and the properties of the symmetry. Then using twice the fact that the object $I$ is invertible, we deduce that  \eqref{eq-ga-central-equiv} is satisfied. This proves Part (f) and concludes the proof of Lemma~\ref{lem-ppte-integ-invol}.

\subsection{Proof of Theorem~\ref*{thm-gp}}\label{sect-proof-thm-gp}
The assumptions (A1)-(A4) ensure that $\phi$ and $\Omega$ are well defined.
We need to verify that the pair $(\phi,\Omega)$ satisfies the axioms (GP1) to (GP5) of a good pair (see Section~\ref{sect-def-good-pair}). Let us prove that $(\phi,\Omega)$ satisfies (GP1). For non-negative integers $k$ and $l$, we have:
\begin{gather*}
\psfrag{a}[Bc][Bc]{\scalebox{.8}{$\alpha^l$}}
\psfrag{u}[Bc][Bc]{\scalebox{.8}{$g^k$}}
\rsdraw{.45}{.9}{gpla-1}
\overset{(i)}{=}\;\;\,
\rsdraw{.45}{.9}{gpla-2}
\;\,\overset{(ii)}{=}\;\alpha^l(g^k)
\rsdraw{.45}{.9}{gpla-3}
\overset{(iii)}{=}\;
\rsdraw{.45}{.9}{gpla-3}\;.
\end{gather*}
Here $(i)$ follows from the multiplicativity of $\Delta$, $(ii)$ from the facts that $g$ is grouplike and $\alpha$ is an algebra morphism, and $(iii)$ from Lemma~\ref{lem-ppte-integ-invol}(a). Thus
\begin{gather*}
\psfrag{b}[Bc][Bc]{\scalebox{.8}{$\lambda$}}
\psfrag{d}[Bc][Bc]{\scalebox{.8}{$\Lambda$}}
\psfrag{a}[Bc][Bc]{\scalebox{.8}{$\alpha^l$}}
\psfrag{u}[Bc][Bc]{\scalebox{.8}{$g^k$}}
\rsdraw{.45}{.9}{gpla-5}
\;\overset{(i)}{=}\;
\rsdraw{.45}{.9}{gpla-4}
\;\overset{(ii)}{=}\;\alpha^l(g)\, \alpha(g^k)\;
\rsdraw{.45}{.9}{gpla-6}
\;\overset{(iii)}{=}\; \id_I.
\end{gather*}
Here $(i)$ follows from the latter computation, $(ii)$ from \eqref{eq-def-g} and \eqref{eq-def-alpha}, and~$(iii)$ from Lemma~\ref{lem-ppte-integ-invol}(a) and the fact that $\lambda \Lambda=\id_I$. Consequently,
\begin{gather*}
\psfrag{b}[Bc][Bc]{\scalebox{.8}{$\lambda$}}
\psfrag{d}[Bc][Bc]{\scalebox{.8}{$\Lambda$}}
\psfrag{a}[Bc][Bc]{\scalebox{.8}{$\alpha^l$}}
\psfrag{u}[Bc][Bc]{\scalebox{.8}{$g^k$}}
\phi\Omega=\frac{1}{mn}  \sum_{k=0}^{m-1}\sum_{l=0}^{n-1} \;\;\rsdraw{.45}{.9}{gpla-5}\;= \frac{1}{mn}  \sum_{k=0}^{m-1}\sum_{l=0}^{n-1} \id_I=\id_I.
\end{gather*}

Let us prove that $(\phi,\Omega)$ satisfies (GP2). It follows from the definition of $\phi$ and the first equality of Lemma~\ref{lem-ppte-integ-invol}(b) that
\begin{equation}\label{eq-gpla-2}
\phi=\frac{1}{m}\sum_{k=0}^{m-1} \lambda \mu(g^k \otimes \id_A).
\end{equation}
Also, the definition of $\phi$, the associativity of~$\mu$, and the fact that $g$ has order $m$ imply that for any integer $t$,
\begin{equation}\label{eq-gpla-1}
\phi \mu (\id_A \otimes g^t)=\frac{1}{m}\sum_{k=0}^{m-1}  \lambda \mu (\id_A \otimes g^{k+t}) =\phi.
\end{equation}
Now, for any integer $k$, we have:
\begin{gather*}
\psfrag{b}[Bc][Bc]{\scalebox{.8}{$\lambda$}}
\psfrag{n}[Bc][Bc]{\scalebox{.8}{$g^{-k}$}}
\psfrag{u}[Bc][Bc]{\scalebox{.8}{$g^k$}}
\psfrag{a}[Bc][Bc]{\scalebox{.8}{$g^{1-k}$}}
\rsdraw{.45}{.9}{gpla-8n}
\;\;\overset{(i)}{=}\;\; \rsdraw{.45}{.9}{gpla-9n}
\;\;\overset{(ii)}{=}\;\, \rsdraw{.45}{.9}{gpla-10n}
\;\;\overset{(iii)}{=}\;\, \rsdraw{.45}{.9}{gpla-11n}\;.
\end{gather*}
Here $(i)$ follows from the fact that $g$ is grouplike, $(ii)$ from the multiplicativity of the $\Delta$, and $(iii)$ from
\eqref{eq-def-g}. Then
\begin{gather*}
\psfrag{b}[Bc][Bc]{\scalebox{.8}{$\lambda$}}
\psfrag{h}[Bc][Bc]{\scalebox{.8}{$\phi$}}
\psfrag{n}[Bc][Bc]{\scalebox{.8}{$g^{-k}$}}
\psfrag{u}[Bc][Bc]{\scalebox{.8}{$g^k$}}
\psfrag{a}[Bc][Bc]{\scalebox{.8}{$g^{1-k}$}}
\rsdraw{.45}{.9}{gpla-12an}
\;\;\overset{(i)}{=}\;\;\frac{1}{m}\sum_{k=0}^{m-1}\;\; \rsdraw{.45}{.9}{gpla-12n}
\;\;\overset{(ii)}{=}\;\,\frac{1}{m}\sum_{k=0}^{m-1}\;\; \rsdraw{.45}{.9}{gpla-13n}\\[.5em]
\psfrag{b}[Bc][Bc]{\scalebox{.8}{$\lambda$}}
\psfrag{h}[Bc][Bc]{\scalebox{.8}{$\phi$}}
\psfrag{n}[Bc][Bc]{\scalebox{.8}{$g^{-k}$}}
\psfrag{u}[Bc][Bc]{\scalebox{.8}{$g^k$}}
\;\;\overset{(iii)}{=}\;\,\frac{1}{m}\sum_{k=0}^{m-1}\;\; \rsdraw{.45}{.9}{gpla-14n}
\;\;\overset{(iv)}{=}\;\;\rsdraw{.45}{.9}{gpla-15}\;\;.
\end{gather*}
Here $(i)$ and $(iv)$ follow from \eqref{eq-gpla-2}, $(ii)$ from the latter computation, and $(iii)$ from~\eqref{eq-gpla-1}.
This proves the first equality of (GP2). The second equality of~(GP2) is just the first equality of (GP2) applied to the reverse Hopf algebra $A^\reve$ (see Section~\ref{sect-dual-reverse})  because $\Omega=\phi_{A^\reve}$.

Let us prove that $(\phi,\Omega)$ satisfies (GP3). We have:
\begin{gather*}
\psfrag{s}[Bc][Bc]{\scalebox{.8}{$g^{1-k}$}}
\psfrag{n}[Bc][Bc]{\scalebox{.8}{$g^{-k}$}}
\psfrag{u}[Bc][Bc]{\scalebox{.8}{$g^k$}}
\psfrag{b}[Bc][Bc]{\scalebox{.8}{$\lambda$}}
\psfrag{d}[Bc][Bc]{\scalebox{.8}{$\Lambda$}}
\phi S
\,\overset{(i)}{=}\;\frac{1}{m}\sum_{k=0}^{m-1}\; \rsdraw{.45}{.9}{gpla-16}
\;\overset{(ii)}{=}\;\frac{1}{m}\sum_{k=0}^{m-1}\; \rsdraw{.45}{.9}{gpla-17}
\,\overset{(iii)}{=}\;\frac{1}{m}\sum_{k=0}^{m-1}\;\sigma_I \; \rsdraw{.45}{.9}{gpla-18}
\;\overset{(iv)}{=}\,\sigma_I \, \phi.
\end{gather*}
Here $(i)$ follows from the definition of $\phi$, $(ii)$ from the anti-multiplicativity of $S$ and the fact that $g$ is grouplike,
$(iii)$ from Lemma~\ref{lem-ppte-integ-invol}(b) and \eqref{eq-sigma-invert-ob}, and $(iv)$ from \eqref{eq-gpla-2} and the fact that $g$ has order~$m$. This proves the first equality of (GP3) because
$$
\nu_{(\phi,\Omega)}\overset{(i)}{=}\norm{\phi S\Omega}_I \overset{(ii)}{=}\sigma_I \norm{\phi \Omega}_I \overset{(iii)}{=}\sigma_I  \norm{\id_I}_I\overset{(iv)}{=}\sigma_I.
$$
Here $(i)$ follows from the definition of $\nu_{(\phi,\Omega)}$, $(ii)$ from the latter computation, $(iii)$ from the fact that $(\phi,\Omega)$ satisfies (GP1), and $(iv)$ from the fact that $\norm{\id_I}_I=1$.
The second equality of~(GP3) is just the first equality of (GP3) applied to  $A^\reve$.

Let us prove that $(\phi,\Omega)$ satisfies (GP4) for the morphism
$$
f=\psfrag{a}[Bc][Bc]{\scalebox{.8}{$\alpha$}} \;\rsdraw{.4}{.9}{mlt-1}\; =(\alpha \otimes \id_A)\Delta.
$$
First, we have:
\begin{gather*}
\psfrag{o}[Bc][Bc]{\scalebox{.8}{$f$}}
\psfrag{a}[Bc][Bc]{\scalebox{.8}{$\alpha^{l+1}$}}
\psfrag{d}[Bc][Bc]{\scalebox{.8}{$\Lambda$}}
\psfrag{h}[Bc][Bc]{\scalebox{.8}{$\Omega$}}
\rsdraw{.45}{.9}{gpla-19}
\;\;\overset{(i)}{=}\;\,\frac{1}{n}\sum_{l=0}^{n-1}\; \rsdraw{.45}{.9}{gpla-20}
\,\overset{(ii)}{=}\;\;\rsdraw{.45}{.9}{gpla-21} \;.
\end{gather*}
Here $(i)$ follows from the definitions of $\Omega$ and $f$ and the coassociativity of $\Delta$, and~$(ii)$ from the definition of $\Omega$  and the fact that $\alpha$ has order~$n$. Second, we have:
\begin{gather*}
\psfrag{a}[Bc][Bc]{\scalebox{.8}{$\alpha$}}
\psfrag{o}[Bc][Bc]{\scalebox{.8}{$f$}}
\psfrag{u}[Bc][Bc]{\scalebox{.8}{$g^k$}}
\psfrag{b}[Bc][Bc]{\scalebox{.8}{$\lambda$}}
\psfrag{h}[Bc][Bc]{\scalebox{.8}{$\phi$}}
\rsdraw{.45}{.9}{gpla-22}
\;\;\overset{(i)}{=}\;\frac{1}{m}\sum_{k=0}^{m-1}\;\; \rsdraw{.45}{.9}{gpla-23}
\;\;\overset{(ii)}{=}\;\frac{1}{m}\sum_{k=0}^{m-1}\;\; \rsdraw{.45}{.9}{gpla-24}\\[.5em]
\psfrag{a}[Bc][Bc]{\scalebox{.8}{$\alpha$}}
\psfrag{o}[Bc][Bc]{\scalebox{.8}{$f$}}
\psfrag{u}[Bc][Bc]{\scalebox{.8}{$g^k$}}
\psfrag{b}[Bc][Bc]{\scalebox{.8}{$\lambda$}}
\psfrag{h}[Bc][Bc]{\scalebox{.8}{$\phi$}}
\overset{(iii)}{=}\; \frac{1}{m}\sum_{k=0}^{m-1} \rsdraw{.45}{.9}{gpla-25}
\!\!\overset{(iv)}{=}\;\frac{1}{m}\sum_{k=0}^{m-1}\;\; \rsdraw{.45}{.9}{gpla-26}
\;\overset{(v)}{=}\;\; \rsdraw{.45}{.9}{gpla-27} \;.
\end{gather*}
Here $(i)$ follows from the definitions of $f$ and $\phi$, $(ii)$ from the associativity of $\mu$,  $(iii)$ from Lemma~\ref{lem-ppte-integ-invol}(d), $(iv)$ from the naturality of $\tau$ and the fact that $g$ is central by Assumption (A5), and $(v)$ from  \eqref{eq-gpla-2}. This proves (GP4) for the morphism $f$.
Finally, by applying (GP4) to $A^\reve$ we obtain that the pair $(\phi,\Omega)$ satisfies (GP5) for the morphism
$$
h=\psfrag{u}[Bc][Bc]{\scalebox{.8}{$g$}} \;\rsdraw{.4}{.9}{mlt-1b}\; = \mu (\id_A \otimes g).
$$
This completes the proof of Theorem~\ref{thm-gp}.

\bibliographystyle{plain}

\end{document}